\def \r{\mathbb R}
\def \q{\mathbb Q}
\def \L{\mathcal L}
\def \T{\mathcal T}
\def \hf{\hbox{\rm H}\Phi}
\def \gr{\hbox{\rm Gr}(1,\r P^2)}
\newtheorem{theorem}{Theorem}[section]
\newtheorem{lemma}[theorem]{Lemma}
\newtheorem{proposition}[theorem]{Proposition}
\newtheorem{corollary}[theorem]{Corollary}
\theoremstyle{remark}
\newtheorem{remark}[theorem]{Remark}
\theoremstyle{definition}
\newtheorem{definition}[theorem]{Definition}
\newtheorem{example}[theorem]{Example}
\newtheorem{problem}{Problem}
\newtheorem{conjecture}[problem]{Conjecture}
\title{The combinatorial geometry of stresses in frameworks}
\author{Oleg Karpenkov}
\date{8 December 2015}
\keywords{Tensegrity, self-tensional equilibrium frameworks}
\email[Oleg Karpenkov]{karpenk@liv.ac.uk}
\begin{document}
\input{epsf}

\begin{abstract}
In this paper we formulate and prove necessary and sufficient geometric conditions
for existence of generic tensegrities in the plane for arbitrary graphs.
The conditions are written in terms of ``meet-join'' relations for the configuration spaces
of fixed points and non-fixed lines through fixed points.
\end{abstract}

\maketitle
\tableofcontents

\section*{Introduction}

Given an arbitrary graph $G$. {\it How to decide which of the realizations of $G$
in the plane admits a non-zero equilibrium self-stress and which does not?}
The main goal of this paper is to construct a necessary and sufficient realizability condition in 
terms of geometric operations on  point-line configurations related to graph realizations.

\vspace{2mm}

{\noindent
{\bf Background.}
J.C.~Maxwell was one of the first to consider equilibrium states for frames
under the action of static forces, see~\cite{Max}.
Later in the second half of the twentieth century the artist K.~Snelson built many
surprising cable-bar sculptures that are actually such frames in equilibrium, see~\cite{Sne}.
Further R.~Buckminster Fuller introduced the term ``tensegrity'' for
these constructions, as a combination of words ``tension'' and
``integrity''. In his book~\cite{Mot} R.~Motro gives an overview
of the history of tensegrity constructions.
}

Tensegrities have a wide range of applications in different
branches of science. For instance they are
used in the study of cells~\cite{Ing,Ing2},
viruses~\cite{Cas,Sim},
for construction of deployable mechanisms~\cite{Ske,Tib}, etc.
Tensegrities often show up in architecture (as a light structural support) and in arts,
e.g., in 2015 a new tensegrity sculpture {\it TensegriTree} was erected (see~\cite{Gra})
to celebrate the 50th Anniversary of the University of Kent.

In mathematics, tensegrities were investigated in many papers.
B.~Roth and W.~Whiteley in~\cite{Rot1}
R.~Connelly and W.~Whiteley in~\cite{Con}
and
R.~Connelly in~\cite{Con2} studied rigidity and flexibility of
tensegrities (see also the survey about rigidity in~\cite{Whi}).
Minimal rigidity questions where studied by
M.~Wang, M.~Sitharam in~\cite{Wan}.
In~\cite{JJSS2015} B.~Jackson, T.~Jord\'an, B.~Servatius and H.~Servatius discussed mechanical properties of tensegrities.
See also a very nice short introductory paper on tensegrities in mathematics by R.~Connelly~\cite{Con2013}.

Generalization of tensegrities were considered by
F.V.~Saliola and W.~Whiteley~\cite{SW} (spheric and projective geometries)
by D.~Kitson, S.C.~Power in~\cite{Kit2014}
(non-Euclidean geometry), by D.~Kitson and B.~Schulze in~\cite{Kit2015} (normed spaces),
by B.~Jackson and A.~Nixon in~\cite{JN2015} (on surfaces in~$\r^3$), etc.

\vspace{2mm}

{\noindent
{\bf Realizability conditions of tensegrities.}
In~\cite{WW,WW2} N.L.~White and W.~Whiteley introduced algebraic conditions for the existence of nonzero tensegrities.
They have expressed these conditions in terms of bracket ring using the determinants of extended rigidity matrices
(see also~\cite{White}).
In particular, they have examined many examples of graphs and write the corresponding conditions both in terms of bracket ring and
in terms of Cayley algebra.
In the papers~\cite{Guz2,Guz3,Guz1} M.~de~Guzm\'an and D.~Orden introduce atom decomposition techniques and
describe several conditions for some further examples of graphs.
}

\vspace{1mm}

All the studied examples suggest a simple geometric description of all tensegrities   
in terms of geometric ``meet-join'' operations of Cayley algebra.
It was not known how to construct geometric conditions from the corresponding bracket ring expressions
(in particular this involves the general open problem on bracket ring expression factorization). 

\vspace{1mm}

In paper~\cite{DKS} we have made first steps in the study of geometric conditions.
We have found two main surgeries and used them in order to find all the conditions for codimension one strata for graphs with
8 or smaller vertices.
In paper~\cite{KSS} we have studied the topological properties of the configuration spaces of all tensegrities for graphs
with 4 and 5 vertices.

\vspace{1mm}

Our main aim for the current paper is to develop a systematics geometric approach for arbitrary graphs in the plane.
In this paper we construct geometric conditions for tensegrities in the plane for arbitrary graphs
(see Theorem~\ref{tensegrity=condi} and Subsection~\ref{techniques}).
The conditions are written in terms of geometric operations (slightly extended ``meet-join'' operations of Cayley algebra) 
on lines and points of
configuration spaces defined by graph realizations in the plane.

\vspace{2mm}

{\noindent
{\bf Organization of the paper.}
We start in Section~1 with necessary definitions and preliminary discussions.
Further in Section~2 we introduced a projective extension of the classical theory of tensegrities.
Notice that the mentioned projective version precisely coincides with the classical theory once restricted to some affine chart.
In this section we study basics of projective statics and give necessary definitions regarding projective tensegrities.
In Subsection~2.5 we formulate the main result of this paper (see Theorem~\ref{tensegrity=condi}).
Further in Section~3 we consider framed cycles and define monodromy operators for them.
We prove that the monodromy operators for a framed cycle in general position are trivial if and only if
there exists a nonzero equilibrium force load on it (see Theorem~\ref{force=monodromy}).
In Section~4 we describe the techniques of resolution schemes in order to study
local properties of equilibrium force-loads at vertices.
Further in Section~5 we introduce the notion of quantizations and prove a necessary and sufficient
condition of existence of non-parallelizable tensegrities in terms of quantizations (see Theorem~\ref{tensegrity=quantiza}).
In Section~6 we prove the main result of the current paper on the necessary
and sufficient geometric conditions of existence of non-parallelizable tensegrities (i.e., Theorem~\ref{tensegrity=condi}).
In this section we also introduce a techniques to write these conditions explicitly.
We conclude this paper in Section~7 with a conjecture on strong geometric conditions
for nonzero tensegrities and some related discussions.
}

\vspace{2mm}

{\noindent
{\bf Acknowledgement.}
The author is grateful to Walter Whiteley and Meera Sitharam for helpful remarks and discussions.
}

\section{Setting up the problem}

We start with a the following definition of a framework in the Euclidean plane.
Later in Subsection~\ref{ProjTens} we switch to projective version of this definition.

\begin{definition}\label{defR}
Let $G$ be an arbitrary graph without loops and multiple edges on $n$ vertices.
Let $V(G)$ and $E(G)$ denote the sets of vertices and edges for $G$ respectively.
\itemize

\item{A {\it framework} $G(P)$ in the plane is a map of the graph $G$ with vertices $v_1, \ldots, v_n$ on
a finite point configuration $P=(p_1,\ldots,p_n)$ in $\r^2$ with
straight edges, such that $G(P)(v_i)=p_i$ for $i=1,\ldots,n$.}

\item{A {\it stress} $w$ on a framework is an assignment of real scalars
$w_{i,j}$ (called {\it tensions}) to its edges $p_ip_j$. We also
put $w_{i,j}=0$ if there is no edge between the corresponding
vertices. Note that in this notation we have $w_{i,j}=w_{j,i}$.}

\item{A stress $w$ is called a {\it self-stress} if the
following equilibrium condition is fulfilled at every vertex
$p_i$:
$$
\sum\limits_{\{j|j\ne i\}} w_{i,j}(p_i-p_j)=0.
$$
By $p_i-p_j$ we denote the vector from the point $p_i$ to the
point $p_j$.}
\item{A pair $(G(P),w)$ is called a {\it tensegrity} if
$w$ is a self-stress  for the framework $G(P)$.}
\end{definition}

\begin{example}\label{SectionDesargues}
Let us work through a particular example of tensegrities on 6 points with the graph $G$ shown on Figure~\ref{pic2.1} on the left.
What are all 6-point configurations $P$ in the plane whose frameworks $G(P)$ have nonzero self-stresses?
The complete answer to this question can be written as follows:

{\bf Generic cases:}

1) the lines $p_1p_2$, $p_3p_4$, $p_5p_6$ have a common point (see Figure~\ref{pic2.1} in the middle);

2) the lines $p_1p_2$, $p_3p_4$, $p_5p_6$ are parallel (see Figure~\ref{pic2.1} on the right);

{\bf Non-generic cases:}

3) the points $p_1$, $p_4$, $p_5$ are in a line;

4) the points $p_2$, $p_3$, $p_6$ are in a line;

5) the points $p_1$, $p_2$, $p_3$, $p_4$ are in a line;

6) the points $p_1$, $p_2$, $p_5$, $p_6$ are in a line;

7) the points $p_3$, $p_4$, $p_5$, $p_6$ are in a line.

\begin{figure}
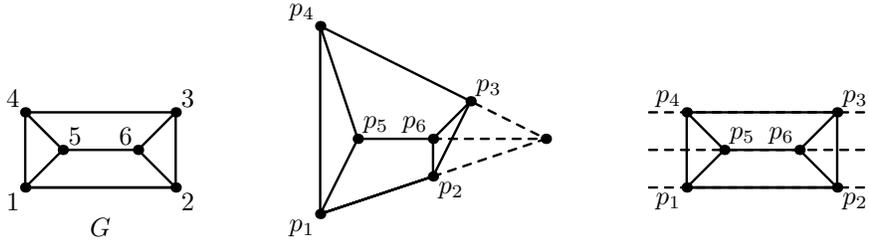

$$\epsfbox{pic2.1} \qquad\quad \epsfbox{pic2.3}\qquad\quad \epsfbox{pic2.2}$$
\caption{The graph $G$ (on the left) and two generic configurations
of existence of nonzero equilibrium stresses for $G$ (in the middle and on the right).}
\end{figure}\label{pic2.1}
\end{example}

\begin{remark}
Notice that the first geometric condition defines a Desargues configuration
$(p_1,\ldots,p_6, q, r_1, r_2, r_3)$,
where

--- $q$ is the intersection point of $p_1p_2$, $p_3p_4$, $p_5p_6$ ({\it center of perspective});

--- $r_1$ is the intersection point of the lines $p_1p_4$ and $p_2p_3$;

--- $r_2$ is the intersection point of the lines $p_4p_5$ and $p_2p_6$;

--- $r_3$ is the intersection point of the lines $p_1p_5$ and $p_3p_6$.
\\
By Desargues' theorem the points $r_1$, $r_2$, and $r_3$ are in a line, this line is called the {\it axis of perspectivity}.
\end{remark}

From this example it is clear that
the enumeration of all different cases is unobservable even for graphs with relatively small number of edges.
The non-generic cases are not as interesting, since non-generic frameworks admit tensegrities with zero
stresses at some edges.
It is better to study non-generic cases separately while examining subgraphs of the graph.
Here we face with the following nontrivial question:
{\it What are the non-generic configurations and how to define them geometrically?}

\begin{remark}
While constructing actual tensegrities in real life it is usually expected that the stresses are of the same level of magnitude.
Then one should not consider some neighborhood of the non-generic case.
\end{remark}

Suppose now we are in a generic situation.
We still have the following problem: {\it the number of generic cases has more that exponential
growth in terms of the number of edges in graphs}.

\vspace{1mm}

So we face with the following two difficulties:

{\bf I.} {\it How to remove non-generic cases?}

{\bf II.} {\it How to merge generic cases?}

\vspace{1mm}

In order to answer the first questions we provide the description of generic cases in Subsection~\ref{fimgp}.
The second question is resolved by projectivization of tensegrities.
In order to implement this approach we introduce a projective version of classical statics,
where forces in the projective plane are represented by 2-forms in $\r^3$ (see more detailed in Subsection~\ref{Static}).

\begin{remark}
In the context of this discussion we would like to mention a preprint~\cite{SW}
regarding spherical and hyperbolic tensegrities and their interrelation to Euclidean case.
Spherical tensegrities is a good alternative candidate to avoid the problem of parallelity.
\end{remark}

\begin{remark}\label{higher}
It is not much known about geometric description of nonzero tensegrities in the multidimensional case.
There are algebraic conditions in terms of bracket rings are provided in~\cite{Whi} by N.L.~White and W.~Whiteley,
but their factorization is known to be a hard problem.
In this context we also would like to mention the paper~\cite{Che}
by J.~Cheng, M.~Sitharam, I.~Streinu where
nonzero stress on an arbitrary framework
with a geometrically specific extension are studied.

In this paper we introduce a techniques of graph quantizations and a techniques of $\hf$-surgeries.
The first has a straightforward generalization to multidimensional case, while the second one is essentially planar.
One should invent some $n$-dimensional analogs of  $\hf$-surgeries in order to approach the combinatorial geometry of tensegrities in higher dimensions.
\end{remark}

\section{Projective geometry and tensegrities}

In this section we collect several notions and definitions that we use further to formulate the main result.
Followed by basic definitions of projective statics of Subsection~2.1
we provide with necessary definitions of projective tensegrities in Subsection~2.2.
Further in Subsection~2.3 we describe general geometric operations and relations on configurations of points and lines.
Next we define frameworks in general position in Subsection~2.4.
Finally, in Subsection~2.5 we formulate the main result of this paper.
We conclude Section~2 with geometric surgeries on frameworks that does not change the property to admit a nonzero tensegrity.

\subsection{Basics of projective statics}\label{Static}
Let us introduce an extension of classical statics to the projective case.
As we will see, the projective definition of forces coincides with the Euclidean definition of forces once restricted to an affine chart.

\subsubsection{Notions and definitions}

Denote by $\Lambda^2(\r^{k+1})$ the space of exterior 2-forms on $\r^{k+1}$.

\begin{definition}
A {\it force} acting on a rigid body in $\r P^k$ is a decomposable 2-form in $\Lambda^{2}(\r^{k+1})$.
\end{definition}

The sum of forces is defined as a sum of $2$-forms in the linear space $\Lambda^{2}(\r^{k+1})$.
If $k=2$ then the sum of any number of forces is a force, since all 2-forms in $\Lambda^{2}(\r^{3})$ are decomposable.
In fact, $k=2$ is the most important case for us
since we work in the projective 2-plane. If $k>2$, then the sum of two forces is not always a force, one can say that
it is a {\it state}, i.e., a 2-form that might be non-decomposable.

\vspace{1mm}

For a point $p=(a_1,\ldots,a_{k+1})$  set
$$
dp=a_1dx_1+\ldots + a_{k+1}dx_{k+1}.
$$
We also write
$$
\tilde p=(a_1:\ldots:a_{k+1}).
$$

\begin{definition}
Let $F$ be a nonzero force.
By definition, $F$ is a decomposable 2-form.
So there exists a pair of nonzero vectors $p_1,p_2\in \r^{k+1}$ such that $F=dp_1\wedge dp_2$.
\begin{itemize}
\item
Let $\tilde p_1,\tilde p_2$ be the projective points corresponding to $p_1$ and $p_2$.
The projective line $\tilde p_1\tilde p_2$ is the {\it line of force}.

\item
Consider an affine chart given by the line at infinity: $A_1x_1+\ldots+A_{k+1}x_{k+1}=0$.
Then the {\it vector of force} in this chart is the 1-form defined as 
the interior product of $F$ with the constant vector field $V$ defined as 
$$
\frac{\partial}{\partial t} (x_1,\ldots,x_{k+1})=(A_1,\ldots,A_{k+1}).
$$
Namely the vector of force is the following 1-form 
$$
\iota_V F ,\quad \hbox{where $\iota_V F(*)=F(V,*)$}.
$$
\end{itemize}
\end{definition}

\subsubsection{Lines of force and the parallelogram rule for force summation}

In order to establish the link to classical mechanics we show how to add forces.
First of all we write down the line of the force for $F_1+F_2$.
Further we examine the parallelogram rule for vector of forces summation. 

\begin{proposition}
Consider two nonzero forces
$$
F_1=dp_1\wedge dq_1 \quad \hbox{and} \quad  F_2=dp_2\wedge dq_2.
$$
Let the intersection of the lines of forces for $F_1$ and $F_2$ is a unique point  $r$.
Then the line of forces for $F_1+F_2$ passes through $r$. 
\end{proposition}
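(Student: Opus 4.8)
The plan is to produce an explicit decomposition of $F_1+F_2$ of the form $d\rho\wedge d\sigma$, where $\rho\in\r^{k+1}$ is a vector representing the common point $r$. This will simultaneously show that $F_1+F_2$ is again a (nonzero, decomposable) force and that its line of force passes through $r$.

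First I would fix a nonzero $\rho\in\r^{k+1}$ with $\tilde\rho=r$. Since $r$ lies on the line of force $\tilde p_1\tilde q_1$ of $F_1$, and since $F_1\ne 0$ forces $p_1,q_1$ to be linearly independent, the vector $\rho$ lies in the plane $\mathrm{span}(p_1,q_1)$; hence I can choose $u_1\in\mathrm{span}(p_1,q_1)$ so that $\{\rho,u_1\}$ is a basis of that plane. Because $p\mapsto dp$ is a linear isomorphism $\r^{k+1}\to(\r^{k+1})^*$ and $\Lambda^2$ of a $2$-dimensional space is $1$-dimensional, there is a nonzero scalar $c_1$ with $dp_1\wedge dq_1=c_1\,d\rho\wedge du_1$. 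In exactly the same way, using that $r$ lies on $\tilde p_2\tilde q_2$, there are $u_2\in\mathrm{span}(p_2,q_2)$ and a nonzero scalar $c_2$ with $dp_2\wedge dq_2=c_2\,d\rho\wedge du_2$. Setting $\sigma=c_1u_1+c_2u_2$, I then get
$$
F_1+F_2=c_1\,d\rho\wedge du_1+c_2\,d\rho\wedge du_2=d\rho\wedge d\sigma,
$$
which is a decomposable $2$-form, hence a force, whose line of force passes through $\tilde\rho=r$.

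Finally I would check that the statement is not vacuous: if $F_1+F_2$ were the zero form, then $F_1$ and $F_2$ would span the same $2$-dimensional subspace of $\r^{k+1}$, so the lines of force of $F_1$ and $F_2$ would coincide, contradicting the hypothesis that they meet in a unique point; the same remark also shows that $\sigma$ is not proportional to $\rho$, so $d\rho\wedge d\sigma$ genuinely defines a line of force. The only point that requires care is that for $k>2$ the sum $F_1+F_2$ need not be decomposable in general — and the computation above is precisely the observation that the coplanarity of the two lines of force, which is built into the hypothesis that they intersect, is exactly what forces decomposability here. I do not foresee any further obstacle.
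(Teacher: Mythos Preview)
Your argument is correct and follows essentially the same route as the paper: rewrite each $F_i$ in the form $d\rho\wedge(\cdot)$ using that $r$ lies on both lines of force, then add and observe that the result is $d\rho\wedge d\sigma$ with $\sigma$ not proportional to $\rho$. The paper simply takes $u_i=q_i$ after a without-loss-of-generality assumption that $r\ne q_i$, whereas you pick a general completion $u_i$ of $\rho$ to a basis; the extra remark on decomposability for $k>2$ is a nice bonus not spelled out in the original.
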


\begin{proof}
Without loss of generality we assume that $r\ne q_1$ and $r\ne q_2$.
Then there exist real numbers $\alpha$ and $\beta$ such that
$$
F_1=\alpha_1 dr\wedge dq_1 \quad \hbox{and} \quad  F_2=\alpha_2 dr\wedge d q_2.
$$
Therefore,
$$
F_1+F_2=dr\wedge d(\alpha_1q_1+\alpha_2q_2).
$$
Since $F_1$ and $F_2$ are nonzero forces with distinct lines of forces,
the point $\alpha_1q_1+\alpha_2q_2$ is nonzero and is not proportional to $r$.
Hence the line of forces for $F_1+F_2$ passes through $r$. 
\end{proof}

\begin{remark}
Notice that the {\it parallelogram rule} for force summation follows directly from the fact that
$$
\iota_V (F_1+F_2)=\iota_V (F_1)+\iota_V (F_2).
$$
\end{remark}

\subsubsection{Links to classical mechanics}
Let us show how the classical statics is embedded into projective statics.
First of all we consider the affine chart $x_{n+1}=0$, all points in this chart
are in one-to-one correspondence with points of type
$$
(u_1,\ldots, u_k,1).
$$
Any force $F$ defined by two points of this affine chart can be written as
$$
F=\lambda d\big((a_1,\ldots,a_k,1)\big)\wedge d\big( (b_1,\ldots,b_k,1)\big).
$$
The vector of force $F$ is, therefore, as follows 
$$
\iota_V F=\lambda\Big(d\big( (b_1,\ldots,b_k,1)\big)-d\big( (a_1,\ldots,a_k,1) \big)\Big)=
\lambda d\big( (b_1-a_1,\ldots,b_k-a_k,0)\big).
$$
In classical statics one considers the first $k$ coordinates of this form as the 
corresponding {\it vector of force}:
$$
\lambda(b_1-a_1,\ldots,b_k-a_k).
$$
In case if $\lambda\ne 0$, the {\it line of force} in classical statics is the line passing through
$(a_1,\ldots,a_k)$ and $(b_1,\ldots,b_k)$.

\subsection{Projective definition of tensegrity}\label{ProjTens}

In order to prevent considering different cases of parallel/non-parallel lines,
we continue to work in the projective settings.
In this paper we use the following projective definition of a tensegrity.

\begin{definition}
Let $G$ be an arbitrary graph without loops and multiple edges.
Let $n$ be the number of vertices of $G$.

\vspace{1mm}

\itemize
\item{A {\it framework} $G(P)$ in $\r P ^2$ is a map of the graph $G$ with vertices $v_1, \ldots, v_n$ on
a finite point configuration $P=(p_1,\ldots,p_n)$ in $\r P^2$ with
straight edges, such that $G(P)(v_i)=p_i$ for $i=1,\ldots,n$.}

\vspace{1mm}

\item{A {\it stress} on an edge $p_ip_j$ is an assignment of two forces $F_{i,j}$ and $F_{j,i}$
whose line of forces coincide with the line $p_ip_j$ such that $F_{i,j}+F_{j,i}=0$.
}

\vspace{1mm}

\item{A {\it force-load} $F$ on a framework is an assignment of stresses  for all edges.
Additionally we set $F_{i,j}=0$ if $v_iv_j$ is not an edge of $G$.
}

\vspace{1mm}

\item{A force-load $F$ is called an {\it equilibrium} force-load if, in addition, the
following equilibrium condition is fulfilled at every vertex
$p_i$:
$$
\sum\limits_{\{j|j\ne i\}} F_{i,j}=0.
$$
}

\vspace{1mm}

\item{A pair $(G(P),F)$ is called a {\it tensegrity} if
$F$ is an equilibrium force-load for the framework $G(P)$.}
\end{definition}

\begin{remark}
Once restricted to an affine chart we arrive to Definition~\ref{defR}.  
Here for all admissible pairs $(i,j)$ we have 
$$
\iota_VF_{i,j}=\omega_{i,j} (dp_i-dp_j),
$$
where $\omega_{i,j}$ is the stress of Definition~\ref{defR}.
\end{remark}

\begin{definition}\label{non-par}
An equilibrium force-load $F$ on $G(P)$ is said to be {\it non-parallelizable} at vertex $p$,
if the following two conditions hold.

Suppose that the forces of $F$ at all edges adjacent to $p$ are $F_1,\ldots, F_s$.

\begin{itemize}
\item
Let $a_i\in \{0,1\}$ for $i=1,\ldots, s$. Then
$$
\sum\limits_{i=1}^s a_iF_i=0 \quad \hbox{implies} \quad a_1=\ldots=a_s.
$$

\item
All $2^{s-1}-1$ lines of forces defined by
$$
F_1+\sum\limits_{i=2}^s a_iF_i, \quad \hbox{where} \quad  (a_2,\ldots a_s)\in \{0,1\}^s\setminus \{ (1,\ldots,1) \}
$$
are distinct.
\end{itemize}
We sat that a tensegrity $(G(P),F)$ is {\it non-parallelizable} if it is non-parallelizable at every its vertex.
\end{definition}

\begin{remark}
If the first condition of non-parallelizability for $(G(P),F)$ is not fulfilled at $p$,
then either some edge can be deleted, or $p$ can be splitted in two vertices
(each edge is adjacent either to one copy of $p$ or to another). The new framework admits the same tensegrity.
So in some sense this tensegrity is realizable for a framework of a ``simpler'' graph
(i.e., such graph has a smaller first Betti number).
\end{remark}

\subsection{Geometry on lines and points}
In this subsection we introduce several geometric operations and relations on points and lines in $\r P^2$.
Geometric relations here have much in common to algebraic equations.

\subsubsection{Basic geometric operations on lines and points}
Let us start with basic four operations.

\vspace{1mm}

{\noindent
{\bf Operation I (2-point operation).} Denote by $\ell_1 \cap \ell_2$ the intersection point of two lines.
In the case of $\ell_1 = \ell_2$ we write $\ell_1 \cap \ell_2=true$.
Further we say
$$
\ell\cap true=true\cap \ell =true\cap true =true
$$
}

\vspace{1mm}

{\noindent
{\bf Operation II (2-line operation).} Denote by $(p_1,p_2)$ the line passing through $p_1$ and $p_2$.
In the case if the points coincide we write $(p_1,p_2)=true$.
Further we say
$$
(p, true)=(true,p)=(true,true) =true.
$$
In case when there is no ambiguity we write simply $p_1p_2$ instead of $(p_1,p_2)$.
}

\vspace{1mm}

{\noindent
{\bf Operation III (Generic point operation).} Given a line $\ell$, choose a point of $\ell\setminus S$,
where $S$ is a discrete set (In fact, for our constructions it is enough to consider two-point sets $S$).
}

\vspace{1mm}

{\noindent
{\bf Operation IV (Generic line operation).} Given a point $p$, choose a line passing through $p$,
and not contained in a certain finite collection of lines (actually we use only single line collections).
}

\begin{remark}\label{Cayley-remark}
Let us say a few words about Cayley algebra here.
Recall that Cayley algebra in symbolic computation (do not mess up with octonions)
is an algebra whose elements are subspaces of a given space. Cayley algebra has two operations:
the sum and the intersection.
Traditionally these two operations are denoted by standard logic operators AND and OR, respectively:
$$
\begin{array}{l}
L_1\vee L_2= L_1 + L_2;
\\
L_1\wedge L_2= L_1\cap L_2.
\end{array}
$$
In some texts they are referred as {\it join} and {\it meet} operations.

Notice that all lines and points in projective geometry are defined by some linear spaces
of dimensions 2 and 1 respectively.
Then Operations~I and~II are particular cases of join and meet operations respectively.
In order to avoid confusion with exterior products ``$\wedge$'' for exterior forms
we prefer to follow the set theoretic notation in this paper.
For more information on Cayley algebras we refer to~\cite{DRS}, \cite{WM}, and~\cite{HLi}.
\end{remark}

\begin{remark}
Let us observe in one example a range of points that are constructed by all the compositions of Operations~I and II.
Given the following four points of $\r P^2$:
$$
P_0=\big((0:0:1),(1:0:1), (0:1:1), (1:1:1)\big).
$$
Using Operations~I and~II one can construct any rational point $\q P^2$, i.e., any point
$$
(a:b:c), \quad \hbox{where $a,b,c$ are rational numbers, and $(a,b,c)\ne (0,0,0)$.}
$$
All the other points can not be constructed from four points of $P_0$.

Notice also that for any 4-tuple of points in general position there exists
a projective transformation that takes these 4-tuple to $P_0$.
\end{remark}

\subsubsection{Geometric relations}
Further we introduce the following three geometric relations.

\vspace{1mm}

{\noindent
{\bf 3-line relation}: We write $\ell_1 \cap \ell_2 \cap \ell_3=true$ if three lines
$\ell_1$, $\ell_2$, and $\ell_3$ has a common point.
We also say
$$
\ell_1 \cap \ell_2 \cap true =
\ell_1 \cap true \cap \ell_3 =
true \cap \ell_2 \cap \ell_3 =
true,
$$
(where $\ell_1$, $\ell_2$, $\ell_3$ are allowed to be ``{\it true}'' as well).
}

\vspace{1mm}

{\noindent
{\bf 3-point relation}: We write $(p_1,p_2,p_3)=true$ if three points $p_1$, $p_2$, and $p_3$
are in a line.
In addition, we set
$$
(p_1,p_2,true)=
(p_1,true,p_3)=
(true,p_2,p_3)=
true,
$$
(where $p_1$, $p_2$, $p_3$ can be ``{\it true}'').
}

\vspace{1mm}

{\noindent
{\bf Point-line relation}: We write $p\in\ell=true$ if $p$ is a point of $\ell$.
Here we set again
$$
p\in true=
true\in\ell=true,
$$
(where $\ell$ and $p$ can be ``{\it true}'').
}

\subsubsection{Geometric relations on configuration spaces of points and lines}

In what follows we consider the configuration spaces of lines and points of special types.
All points of such configuration spaces are fixed and
each line of the configuration space is not fixed but passes through a prescribed fixed point.

\begin{definition}
Consider an arbitrary configuration space $\Xi$ of fixed points $P$ and non-fixed lines $L$ passing through fixed points.
We say that a composition of several geometric operations and one geometric relation
on vertices and lines of $\Xi$ is a {\it geometric condition} on $\Xi$.
\end{definition}

\begin{definition}
We say that a system of geometric conditions on $\Xi$ is {\it fulfilled}
at $P$ if there exists a choice of non-fixed lines in $L$
such that every geometric condition in the system results in ``{\it true}'' for this choice of non-fixed lines.
\\
We say that two systems of geometric conditions on $\Xi$ are {\it equivalent} if for every configuration $P$
either these systems are both fulfilled or they are both not fulfilled.
\end{definition}

\begin{remark}
In fact, if we arrive at ``$true$'' at some iteration of a geometric condition calculation
then the geometric condition is fulfilled itself.
\end{remark}

\begin{example}
Let $\Xi=\big((p_1,\ldots, p_9),(\ell)\big|p_5\in\ell\big)$.
Then the following configuration
$$
\epsfbox{pic.17}
$$
fulfills the system of geometric condition
$$
\left\{
\begin{array}{l}
p_1p_4\cap p_2p_3\cap \ell=true\\
p_6p_9\cap p_7p_8\cap \ell=true
\end{array}
\right.
.
$$
See also Examples~\ref{exx1}, \ref{exx2}, and~\ref{exx3} below.
\end{example}

\subsection{Frameworks in general position}\label{fimgp}

First we give the definition of a collection of lines in general position.

\begin{definition}
An $n$-tuple of lines in the projective plane are said to be {\it in general position} if
the union of their pairwise intersections is discrete and contains precisely $\frac{n(n-1)}{2}$ distinct points.
\end{definition}

We say that a graph is a {\it cycle} if it is homeomorphic to a circle.

\begin{definition}
Let $C(P)$ be a realization of a cycle $C$ in the projective plane, where $P=(p_1,\ldots,p_n)$.
We say that $C(P)$ is in general position if the lines passing through the edges of $C(P)$ are in general position.
\end{definition}

{\noindent
{\it Remark.}
Note that every edge of a cycle in general position has distinct endpoints. Otherwise the lines passing through
the edges have less distinct intersection points.
}

\vspace{2mm}

{\noindent
{\it Remark.}
In what follows we do not consider graphs having vertices of degree 1 or 2.
Vertices of degree 1 can be removed, while vertices of degree 2 together with adjacent edges can be
replaced by the edge connecting the endpoints of the adjacent edges distinct to the vertex.
This operation does not change the geometry of solutions.
In addition we do not consider non-connected graphs.
}

\vspace{2mm}

A {\it simple cycle} in a graph $G$ is a subgraph of $G$ homeomorphic to the circle.
(Here we use the word ``simple'' to avoid the confusion with closed walks also known as cycles in graphs.)
Let us now define a framework in general position.

\begin{definition}\label{Definition_general_position_framework}
Let $G$ be a connected graph on $n$ vertices whose vertices are all of degree at least 3.
We sat that $G(P)$ is a framework {\it in general position}
if every simple cycle of at most $n-1$ vertex is in general position.
\end{definition}

\subsection{Formulation of main result}
Let $G$ be a graph on $n$ vertices, and $G(P)$ be a framework with distinct points $P$.
Denote by $\Xi_G(P)$ the configuration space of the following points and lines:
\\
--- enumerated fixed points: all points of $P=(p_1,\ldots,p_n)$;
\\
--- enumerated non-fixed lines: at each point $p_i\in P$ we consider $\deg p_i-3$ lines passing through $p_i$.
We denote them by $\ell_{i,1},\ldots,\ell_{i,\deg p_i-3}$.

Notice that the dimension of $\Xi_G(P)$ is
$$
\sum\limits_{i=1}^n \big(\deg (p_i)-3\big),
$$
For instance, if all vertices are of degree 3, then the configuration space $\Xi_G(P)$
contains only fixed points and hence $\Xi_G(P)$ consists of 1 configuration.


One of the main results of the current paper is as follows.

\begin{theorem}\label{tensegrity=condi}
A framework $G(P)$ in general position admits a non-parallelizable tensegrity
if and only if
this framework satisfies the system of geometric conditions on $\Xi_G(P)$ for all simple cycles
of $G$.
\end{theorem}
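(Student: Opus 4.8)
The plan is to deduce Theorem~\ref{tensegrity=condi} from the two intermediate characterizations established earlier: Theorem~\ref{tensegrity=quantiza} (a framework admits a non-parallelizable tensegrity if and only if it admits a quantization) and Theorem~\ref{force=monodromy} (a framed cycle in general position carries a nonzero equilibrium force-load if and only if all of its monodromy operators are trivial). The bridge between them is that a quantization of $G(P)$ is precisely a globally consistent family of local resolution schemes at the vertices, and that such global consistency is a condition supported on cycles; combined with the explicit geometric meaning of the monodromy, this converts the consistency into the announced system of geometric conditions on $\Xi_G(P)$.

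First I would unwind what a quantization provides. At a vertex $p_i$ of degree $d$, a resolution scheme (Section~4) records how the $d$ forces along the edges at $p_i$ are aggregated by successive partial sums; the lines of force of the $d-3$ non-boundary partial sums are lines through $p_i$, and these are exactly the non-fixed lines $\ell_{i,1},\dots,\ell_{i,d-3}$ of $\Xi_G(P)$. Once these lines are fixed, the force-load around $p_i$ is determined up to scaling, and for a non-parallelizable force-load the two conditions of Definition~\ref{non-par} are exactly the statements that the relevant points and lines stay in the nondegenerate range of Operations~I--IV of Subsection~2.3. Thus a quantization is the datum of a point of $\Xi_G(P)$ (a choice of all the $\ell_{i,k}$) together with the information that the induced per-vertex scalings can be matched along every edge.

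Next I would show that this matching is governed precisely by the simple cycles. Fix a spanning tree $T$ of $G$; propagating the per-vertex scalars along $T$ is free, and for each remaining edge the requirement that the force computed from its two endpoints agree is one equation supported on the fundamental simple cycle of that edge. Since every closed walk decomposes into simple cycles, the whole consistency system is generated by (and already contained in) the conditions attached to all simple cycles. For a fixed simple cycle $C$, the cycle together with its remaining edges and the already chosen lines $\ell_{i,k}$ at the vertices of $C$ is a framed cycle in the sense of Section~3 (the aggregated leg force at each vertex of $C$ has a prescribed line of force, read off from the resolution scheme there); its monodromy operator is exactly the product of the per-corner rescalings, so triviality of the monodromy is the consistency equation for $C$. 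By Theorem~\ref{force=monodromy} this holds if and only if the framed cycle carries a nonzero equilibrium force-load, and tracing a single such force around $C$ is a composition of the operations of Subsection~2.3 (turn the corner at each vertex by meeting and joining with the neighbour-points and the lines $\ell_{i,k}$) whose closing-up is one of the geometric relations of Subsection~2.3. This is the geometric condition assigned to $C$; running over all simple cycles gives the system in the statement, and the same bookkeeping yields the explicit form promised for Section~6.

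The step I expect to be the main obstacle is the sufficiency half of the cycle-to-global reduction: given a point of $\Xi_G(P)$ at which the geometric condition of every simple cycle is fulfilled, one must reassemble a single global non-parallelizable equilibrium force-load. Necessity is just the restriction of a tensegrity to each framed cycle, but sufficiency requires that the per-cycle solutions can be chosen coherently (with matching scalar normalisations), which is where one uses a cycle basis consisting of simple cycles together with the spanning-tree propagation above; here one must also be careful that Definition~\ref{Definition_general_position_framework} only guarantees general position for simple cycles on at most $n-1$ vertices, so a Hamiltonian simple cycle must be treated by reducing its condition to those of shorter cycles (or by checking its monodromy directly from the conditions already available). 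A secondary technical point is to verify, vertex by vertex, that every corner map of a monodromy is genuinely realised by Operations~I--IV, so that the output is a geometric rather than a merely algebraic condition; non-parallelizability is exactly what keeps those operations away from their degenerate inputs.
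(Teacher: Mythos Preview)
Your overall architecture is the paper's architecture: reduce Theorem~\ref{tensegrity=condi} to Theorem~\ref{tensegrity=quantiza}, and then translate the consistency of a quantization cycle-by-cycle into geometric conditions via Theorem~\ref{force=monodromy}. The paper's proof is literally a three-line chain: Theorem~\ref{tensegrity=quantiza}, then Definition~\ref{consistent2}, then Corollary~\ref{cor_geommm} (which packages Proposition~\ref{zzzz} and the identification of $\Xi_G(P)$ with the set of quantizations). So you have the right plan.

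Where your write-up drifts is in the paraphrase of Theorem~\ref{tensegrity=quantiza}: you state it as ``admits a non-parallelizable tensegrity if and only if it admits a quantization'', but the actual conclusion is ``if and only if it admits a \emph{consistent generic} quantization''. Every framework admits quantizations (any choice of the non-fixed lines gives one); the content is entirely in the word \emph{consistent}, which already means that every simple cycle on at most $n-1$ vertices has trivial monodromy. Once you read the theorem correctly, the ``main obstacle'' you flag---reassembling a global force-load from per-cycle data via a spanning tree---evaporates: that inductive reconstruction is precisely the proof of the sufficiency direction of Theorem~\ref{tensegrity=quantiza}, and you are citing it, not reproving it. What remains is only the dictionary step: a generic quantization is consistent at $C$ if and only if the framed cycle $C(G,\T,P,\L)$ satisfies its geometric condition on $\Xi_G(P)$, which is exactly Corollary~\ref{cor_geommm}.

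Your instinct about Hamiltonian cycles is correct and matches the paper: Definition~\ref{consistent2} and Definition~\ref{trtrt} only range over simple cycles that do not contain all vertices, and the proof of Theorem~\ref{tensegrity=quantiza} handles the case where the induction hits a Hamiltonian cycle by rerouting through a shorter cycle (using that every vertex has degree $\ge 3$). So ``all simple cycles'' in the statement should be read as ``all simple cycles on at most $n-1$ vertices'', and no separate argument is needed once you invoke Theorem~\ref{tensegrity=quantiza}.
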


\begin{remark}
The proof of this theorem is constructive, one can write the system of conditions explicitly (see Subsection~\ref{Quant_geom}).
The system of geometric conditions on $\Xi_G(P)$ would be defined much later (in Definition~\ref{trtrt}).
Until that, for simplicity, we consider this theorem as the existence statement.
\end{remark}

\begin{example}
For the second 6-vertex graph as on Figure~\ref{pic2.4} on the left we have the following geometric condition:

$$
\Big((p_1,p_2) {\cap} (p_4,p_5),
(p_2,p_3) {\cap} (p_5,p_6),
(p_3,p_4) {\cap} (p_1,p_6)\Big) =0.
$$
This geometric configuration is shown on Figure~\ref{pic2.4} (center).
Due to Pascal's theorem this geometric conditions is equivalent to the following one: {\it the points $p_1,\ldots, p_6$
are on a conic} (see Figure~\ref{pic2.4} on the right).
\begin{figure}
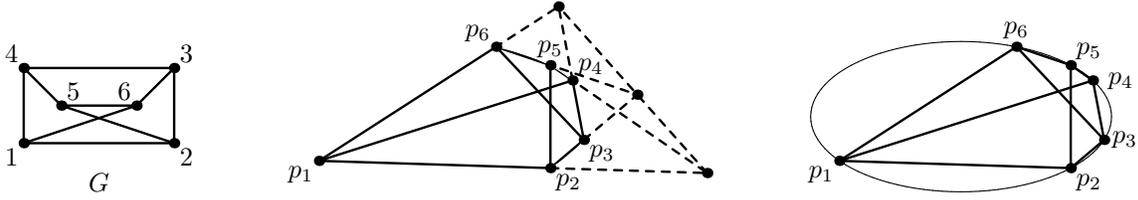

$$\epsfbox{pic2.4} \qquad\quad \epsfbox{pic2.6}\qquad\quad \epsfbox{pic2.5}$$
\caption{The graph $G$ (on the left), corresponding geometric condition $G$
(in the middle) and equivalent algebraic condition (on the right).}
\end{figure}\label{pic2.4}
\end{example}

\begin{remark}
In the proof of Theorem~\ref{tensegrity=condi} we use a technique of quantizations
and resolution schemes.
We give necessary notions, definitions and formulate several related propositions in Sections~3--5.
We return back to the proof of Theorem~\ref{tensegrity=condi} in Subsection~\ref{ProofSubSect}.
\end{remark}

\subsection{$\hf$-surgeries on tensegrities}

Consider a graph $G_H$ on 6 vertices $v_1,\ldots, v_6$ with edges
$$
v_1v_2, \quad
v_1v_3, \quad
v_1v_4, \quad
v_2v_5, \quad \hbox{and} \quad
v_2v_6.
$$
Denote also $G_\Phi$ the graph on vertices $v_1', v_2',v_3,v_4,v_5,v_6$ with edges
$$
v_1'v_2', \quad
v_1'v_3, \quad
v_1'v_5, \quad
v_2'v_4, \quad \hbox{and} \quad
v_2'v_6.
$$
\begin{definition}\label{HPhi}
Let $G$ be an arbitrary graph and let $G_H$ be a subgraph in $G$.
Let $G(P)$ be a framework on $G$ and let $G_H(Q)\subset G(P)$ has vertices $q_1,\ldots, q_6$.
Suppose that

\begin{itemize}
\item the vertices $q_1$ an $q_2$ has degree 3 in $G$;

\item $q_1\ne q_2$;

\item $q_1q_3 \ne q_2q_5$;

\item $q_1q_4 \ne q_2q_6$.
\end{itemize}
Consider $G_\Phi(Q')$ on points $q_1',q_2',q_3,q_4,q_5,q_6$ where
$$
q_1'=q_1q_3\cap q_2q_5\quad \hbox{and} \quad q_2'=q_1q_4\cap q_2q_6.
$$
We say that operation of replacing the subframework $G_H(Q)$ with $G_\Phi(Q')$ on the framework $G(P)$
is an {\it $\hf$-surgery} on $G(P)$ at the edge $q_1q_2$.
(See Figure~\ref{pic.3}.)
\end{definition}

\begin{figure}
$$\epsfbox{pic.3}$$
\caption{$\hf$-surgery.}\label{pic.3}
\end{figure}

\begin{proposition}\label{HQ-proposition}
Consider a pair of frameworks $G_H(Q)$ and $G_\Phi(Q')$ as above.
Let us given 4 forces

--- $F_{1}$ along $q_1q_3=q_1'q_3$;

--- $F_{2}$ along $q_1q_4=q_2'q_4$;

--- $F_{3}$ along $q_2q_5=q_1'q_5$;

--- $F_{4}$ along $q_2q_6=q_2'q_6$.
\\
Then the following two statements are equivalent
\begin{itemize}
\item there exists a force $F$ with line of force $q_1q_2$ satisfying
$$
F_{1}+F_{2}+F=F_3+F_4-F=0;
$$
\item there exists a force $F'$ with line of force $q'_1q'_2$ satisfying
$$
F_{1}+F_{3}+F'=F_2+F_4-F'=0.
$$
\end{itemize}
$($It is clear if the forces $F$ and $F'$ exist, they are uniquely defined.$)$
\end{proposition}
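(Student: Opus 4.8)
The plan is to treat this as a statement about linear algebra in the space $\Lambda^2(\r^3)$ of forces, exploiting the fact that in the projective plane every $2$-form is decomposable, so the only constraint on a candidate force $F$ is that its line of force equal the prescribed line. First I would observe that the four given forces $F_1, F_2, F_3, F_4$ have lines of force passing through fixed points: $F_1$ and $F_2$ both lie on lines through $q_1$, while $F_3$ and $F_4$ both lie on lines through $q_2$; simultaneously $F_1$ and $F_3$ lie on lines through $q_1'$, and $F_2$ and $F_4$ lie on lines through $q_2'$. The key symmetry is that the edge set $\{q_1q_3, q_1q_4, q_2q_5, q_2q_6\}$ of $G_H(Q)$ equals, as a set of lines, the edge set $\{q_1'q_3, q_1'q_5, q_2'q_4, q_2'q_6\}$ of $G_\Phi(Q')$ under the identifications $q_1q_3=q_1'q_3$, $q_1q_4=q_2'q_4$, $q_2q_5=q_1'q_5$, $q_2q_6=q_2'q_6$ coming from the definition $q_1'=q_1q_3\cap q_2q_5$, $q_2'=q_1q_4\cap q_2q_6$. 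So the same four forces are being regrouped.

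Next I would rephrase both bullets purely additively. The first says: $F_1+F_2$ is a force whose line of force is $q_1q_2$ (then $F=-(F_1+F_2)$ works, and conversely), i.e. $F_1+F_2$ is either zero or decomposable with line of force $q_1q_2$. Since we are in $\r^3$ every $2$-form is decomposable, so the content is just: the line of force of $F_1+F_2$ is $q_1q_2$, together with existence of the equilibrium at $q_2$ forcing $F_3+F_4 = F = -(F_1+F_2)$; but note the two equalities $F_1+F_2+F=0$ and $F_3+F_4-F=0$ together say exactly $F_1+F_2+F_3+F_4=0$ and $F=-(F_1+F_2)=F_3+F_4$. Therefore the first bullet is equivalent to: $F_1+F_2+F_3+F_4=0$ AND the line of force of $F_1+F_2$ (equivalently of $F_3+F_4$) is $q_1q_2$. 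Likewise the second bullet is equivalent to: $F_1+F_2+F_3+F_4=0$ AND the line of force of $F_1+F_3$ (equivalently of $F_2+F_4$) is $q_1'q_2'$.

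So it remains to show that, assuming $F_1+F_2+F_3+F_4=0$, the line of force of $F_1+F_2$ is $q_1q_2$ if and only if the line of force of $F_1+F_3$ is $q_1'q_2'$. Here I would use the Proposition on lines of force for sums (the parallelogram-rule proposition earlier in the excerpt): the line of force of $F_1+F_2$, if it is a nonzero force with distinct summand lines, passes through $q_1q_3\cap q_1q_4 = q_1$; similarly the line of force of $F_3+F_4$ passes through $q_2q_5\cap q_2q_6=q_2$. But $F_1+F_2=-(F_3+F_4)$, so their common line of force passes through both $q_1$ and $q_2$, hence equals $q_1q_2$ automatically — provided the relevant sums are nonzero and have distinct constituent lines, which is guaranteed by the hypotheses $q_1\ne q_2$, $q_1q_3\ne q_2q_5$, $q_1q_4\ne q_2q_6$. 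The symmetric argument with the regrouping $F_1+F_3$ and $F_2+F_4$: the line of force of $F_1+F_3$ passes through $q_1'q_3\cap q_1'q_5 = q_1'$ and the line of force of $F_2+F_4$ passes through $q_2'q_4\cap q_2'q_6=q_2'$, and again $F_1+F_3=-(F_2+F_4)$ forces the common line to be $q_1'q_2'$. Thus in \emph{both} formulations, once $F_1+F_2+F_3+F_4=0$ holds, the line-of-force condition is \emph{automatic}; the two bullets are each equivalent to the single relation $F_1+F_2+F_3+F_4=0$, hence equivalent to each other. Uniqueness of $F$ and $F'$ is then immediate: $F=-(F_1+F_2)$ and $F'=-(F_1+F_3)$.

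The main obstacle I anticipate is the degenerate-case bookkeeping: one must check that the sums $F_1+F_2$, $F_3+F_4$, $F_1+F_3$, $F_2+F_4$ are genuinely nonzero forces with distinct constituent lines so that the parallelogram-rule proposition applies and pins the line of force down to a single point-pair; this is exactly where the four hypotheses in Definition~\ref{HPhi} and the list in the proposition are used, and where one also has to handle the possibility that some $F_i$ itself vanishes. Once that is dispatched, the argument is the short additive computation above.
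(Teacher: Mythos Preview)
Your proof is correct and follows essentially the same approach as the paper: both hinge on the relation $F_1+F_2+F_3+F_4=0$ together with the parallelogram-rule proposition to locate the line of force of each pairwise sum. You organize it slightly more symmetrically by observing that each bullet reduces to this single additive relation (the line-of-force requirement becoming automatic), whereas the paper proves one implication directly and invokes symmetry for the converse; the degenerate-case bookkeeping you flag is glossed over in the paper's proof as well.
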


\begin{proof}
Suppose that such $F$ exists. Then denote
$$
F_{1',2'}=F_{1}+F_{3}, \quad \hbox{and} \quad F_{2',1'}=F_{2}+F_{4}.
$$
Let us prove that $F_{1',2'}+F_{2',1'}=0$.
We have
$$
\begin{aligned}
F_{1',2'}+F_{2',1'}&=(F_{1}+F_{3})+(F_{2}+F_{4})=
(F_{1}+F_{3})+(F_{2}+F_{4})+(F-F)\\
&=(F_{1}+F_{3}+F)+(F_{2}+F_{4}-F)=0+0=0.
\end{aligned}
$$
Therefore, first the line of force for $F_{1',2'}$ contains $q_2'$.
Secondly, $F_{2',1'}=-F_{1',2'}$.
Then $F'=F_{1',2'}$ is the stress at $q_1'q_2'$ satisfying
$$
F_{1}+F_{3}+F'=F_2+F_4-F'=0.
$$

The converse statement is true by the same reasons.
\end{proof}

\begin{corollary}
An $\hf$-surgery on a framework does not change the dimension of the space of equilibrium force-loads.
\qed
\end{corollary}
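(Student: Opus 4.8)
The plan is to exhibit an explicit linear isomorphism between the space of equilibrium force-loads on $G(P)$ and the space of equilibrium force-loads on the framework obtained from $G(P)$ by the $\hf$-surgery, and then simply read off equality of dimensions. The only edges that change under the surgery are the five edges of $G_H(Q)$, namely $q_1q_2$, $q_1q_3$, $q_1q_4$, $q_2q_5$, $q_2q_6$, which are replaced by the five edges of $G_\Phi(Q')$, namely $q_1'q_2'$, $q_1'q_3$, $q_1'q_5$, $q_2'q_4$, $q_2'q_6$; all remaining edges and vertices, and hence all remaining equilibrium equations, are untouched. Because $q_1$ and $q_2$ have degree $3$ in $G$, the only equilibrium equations affected are those at $q_1,q_2,q_3,q_4,q_5,q_6$, and symmetrically the only new equilibrium equations are those at $q_1',q_2',q_3,q_4,q_5,q_6$.

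First I would define a map $\phi$ from equilibrium force-loads of $G(P)$ to force-loads of the surgered framework: keep every force on an edge outside $G_H(Q)$; transport the force along $q_1q_3$ to the geometrically coinciding edge $q_1'q_3$, the force along $q_1q_4$ to $q_2'q_4$, the force along $q_2q_5$ to $q_1'q_5$, and the force along $q_2q_6$ to $q_2'q_6$; and on the new edge $q_1'q_2'$ put the unique force $F'$ produced by Proposition~\ref{HQ-proposition}. With the notation of that proposition, the equilibrium conditions at $q_1$ and $q_2$ in $G(P)$ say precisely $F_1+F_2+F=0$ and $F_3+F_4-F=0$, so the hypothesis of the first bullet of Proposition~\ref{HQ-proposition} is met and $F'$ exists; its second bullet, $F_1+F_3+F'=0$ and $F_2+F_4-F'=0$, is exactly the equilibrium condition at $q_1'$ and $q_2'$. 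At each of $q_3,q_4,q_5,q_6$ the single incident $G_H(Q)$-edge is replaced by a $G_\Phi(Q')$-edge carrying the same force along the same line, so the equilibrium equation there is unchanged. Hence $\phi$ really lands in the space of equilibrium force-loads of the surgered framework.

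Next I would check that $\phi$ is linear and bijective. Linearity is immediate: transporting a force is the identity on the underlying $2$-form, and $F'=-(F_1+F_3)$ depends linearly on the input. For injectivity, if $\phi(F)=0$ then all forces outside $G_H(Q)$ vanish and $F_1=\dots=F_4=0$, whence the equilibrium equation $F_1+F_2+F=0$ at $q_1$ forces the force along $q_1q_2$ to vanish as well, so $F=0$. For surjectivity I would run the same construction in reverse, using the converse direction of Proposition~\ref{HQ-proposition} to recover the force along $q_1q_2$ from a given equilibrium force-load of the surgered framework; this yields an explicit inverse to $\phi$. Therefore $\phi$ is an isomorphism and the two spaces have equal dimension.

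The routine but error-prone part is the bookkeeping with the orientation conventions $F_{i,j}=-F_{j,i}$ and matching them to the signs in $F_1+F_2+F$ versus $F_3+F_4-F$ in Proposition~\ref{HQ-proposition}; one must also invoke the degree-$3$ hypothesis on $q_1$ and $q_2$ at exactly the right moment to guarantee that no further edges, and hence no further equilibrium equations, are involved. All the substantive geometric content is already carried by Proposition~\ref{HQ-proposition}, so no additional difficulty is expected.
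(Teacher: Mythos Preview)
Your proposal is correct and follows exactly the approach the paper intends: the paper gives no explicit proof (the corollary is marked with an immediate \qed), treating the statement as a direct consequence of Proposition~\ref{HQ-proposition}, and you have simply spelled out the natural linear isomorphism that this proposition furnishes. The bookkeeping you flag (sign conventions and the degree-$3$ hypothesis on $q_1,q_2$) is handled correctly.
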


\begin{remark}
$\hf$-surgeries are projective analogs of surgeries of type II from~\cite{DKS}.
\end{remark}

\section{Monodromies of framed cycles and equilibrium force-loads}

In this section we study static properties of framed cycles.
We start in Subsection~3.1 with basic definitions of shift operators and monodromy operators.
Further in Subsection~3.2 we define equilibrium force-loads for framed cycles.
In Subsection~3.3 we study the projection operation on framed cycles (then we use them in the proof of the next subsection).
Finally, in Subsection~3.4 we prove that the existence of nonzero equilibrium force-loads on a framed cycle in general
position is equivalent to triviality of monodromy operators for this cycle.

\subsection{Framed cycles in general position, shift operators, monodromies}

We start with the notion of framed cycles in general position.
Further we introduce the notions of shift operators and monodromy operators.

\subsubsection{Framed cycles in general position}

\begin{definition}
We say that a realization of a cycle $C(P)$ with $P=(p_1,\ldots, p_k)$ in the projective plane has a {\it framing} if every  vertex $p_i$
is equipped with a line $\ell_i$ passing through it.
The framework $C(P)$ together with its framing is called the {\it framed cycle}.
We denote
$$
C(P,L)=\big((p_1,\ldots,p_k),(\ell_1,\ldots,\ell_k)\big).
$$
\end{definition}

\begin{definition}\label{fram-gp}
A framed cycle $C(P,L)$ is in {\it general position} if

--- the cycle $C(P)$ is in general position;

--- for all admissible $i$ we have: the line $\ell_i$ does not contain the points $p_{i-1}$ and $p_{i+1}$.

\end{definition}

\subsubsection{Shift operators}

Let us introduce shift maps for the lines of a framed cycle $C(P,L)$ along simple paths.

First of all we define a shift operator for consecutive framed lines.
Assume that $C(P,L)$ has $k$ vertices.
Below in this subsection we consider all indices mod $k$.
Let $p_{i}$ and $p_{i+1}$ be two consecutive points of the cycle, and let $\ell$ be any line
not passing through $p_{i}$ and $p_{i+1}$.

Define
$$
\xi_\ell[p_ip_{i+1},\ell_i,\ell_{i+1}]:\ell_i \to  \ell_{i+1},
$$
such that for every $p \in \ell_i$ we have
$$
\xi_\ell[p_ip_{i+1},\ell_i,\ell_{i+1}](p)=  \ell_{i+1}\cap\big((p_ip_{i+1}\cap \ell),p\big)
$$
(see Figure~\ref{pic.18}).

\begin{figure}
$$\epsfbox{pic.18}$$
\caption{A shift operation $\xi_\ell[p_ip_{i+1},\ell_i,\ell_{i+1}]$.
Here
$$
q=\xi_\ell[p_ip_{i+1},\ell_i,\ell_{i+1}](p)= \ell_{i+1}\cap\big((p_ip_{i+1}\cap \ell),p\big).
$$
}\label{pic.18}
\end{figure}

\begin{remark}
One might consider the line $\ell$ as the line at infinity for some affine chart.
In this affine chart the line
$\big((p_ip_{i+1}\cap \ell),p\big)$
is simply the line through $p$ parallel to $p_ip_{i+1}$.
\end{remark}

Now we define the shift operator for a path $p_i\ldots p_{i+s}$
in a framed cycle $C(P,L)$ in general position (summation of indices is mod $k$ as above).
Let also $\ell$ be a line that does not contain vertices of the path.
Set
$$
\begin{array}{l}
\xi_\ell[p_i\ldots p_{i+s};\ell_{i},\ldots, \ell_{i+s}]=\\
\qquad
\xi_\ell[p_{i+s-1}p_{i+s};\ell_{i+s-1},\ell_{i+s}]\circ \dots \circ\xi_\ell[p_ip_{i+1};\ell_{i},\ell_{i+1}]
:\ell_i \to  \ell_{i+s}.
\end{array}
$$

\subsubsection{Monodromy operators}

Consider a framed cycle $C(P,L)$ in general position, where $P=(p_1, \ldots, p_k)$
and $L=(\ell_1, \ldots, \ell_k)$.

\begin{definition}
Let $\ell$ be a line in $\r P^2$ which does not pass through vertices of the framed cycle $C(P,L)$.
The {\it monodromy} of $C(P,L)$ at  $\ell_i\in L$ is the operator
$$
\xi_\ell[p_ip_{i+1}p_{i+2}\ldots, p_{i-1}p_i;\ell_i,\ell_{i+1},\ell_{i+2},\ldots, \ell_{i-1},\ell_i]:\ell_i\to \ell_i.
$$
We denote it by $M_\ell(\ell_i,C(P,L))$.
\end{definition}

It is clear that the monodromy operator acts as a linear operator on $\ell_i$ with the origin at $p_i$.

\begin{definition}
A monodromy $M_\ell(\ell_i,C(P,L))$ is said to be {\it trivial} if
it is an identity map on $\ell_i$.
\end{definition}

\begin{proposition}\label{dif-mono}
Suppose that there exists $i$ such that $M_\ell(\ell_i,C(P,L))$ is trivial.
Then $M_\ell(\ell_j,C(P,L))$ is trivial for all $j\in\{1,\ldots, k\}$.
\end{proposition}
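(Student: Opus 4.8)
The plan is to express the monodromy at $\ell_j$ as a conjugate of the monodromy at $\ell_i$ by the shift operator that carries $\ell_i$ to $\ell_j$ along a sub-path of the cycle; since triviality (being the identity map) is preserved under conjugation by a bijection, the statement will follow. Throughout I keep the auxiliary line $\ell$ fixed, exactly as in the definition of $M_\ell$, and I may assume $j\ne i$ since for $j=i$ there is nothing to prove.

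First I would record that every elementary shift operator $\xi_\ell[p_mp_{m+1};\ell_m,\ell_{m+1}]$ occurring in a framed cycle in general position is a bijection $\ell_m\to\ell_{m+1}$. Indeed, it is the perspectivity from $\ell_m$ onto $\ell_{m+1}$ with center $c_m=(p_mp_{m+1})\cap\ell$. Using Definition~\ref{fram-gp} (the framing line at a vertex avoids the two neighbouring vertices) together with the hypothesis that $\ell$ passes through no vertex of $C(P)$, one checks that $\ell_m\ne\ell_{m+1}$ and that $c_m$ lies on neither $\ell_m$ nor $\ell_{m+1}$: e.g.\ if $c_m\in\ell_m$ then, as $c_m\ne p_m$ (else $p_m\in\ell$), the line $\ell_m$ would contain $p_m$ and $c_m$ and hence equal $p_mp_{m+1}$, contradicting $p_{m+1}\notin\ell_m$. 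Therefore this perspectivity is a bijection, its inverse being the perspectivity $\ell_{m+1}\to\ell_m$ with the same center $c_m$. Consequently every composite shift operator $\xi_\ell[p_a\ldots p_b;\ell_a,\ldots,\ell_b]$ along a path of the cycle is a bijection, being a composition of such perspectivities.

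Now fix $i$ and pick $j\ne i$. Cut the cyclic sequence of vertices at $p_i$ and $p_j$ into the forward sub-path $\alpha=(p_i,p_{i+1},\ldots,p_j)$ and the complementary forward sub-path $\beta=(p_j,p_{j+1},\ldots,p_i)$ (indices mod $k$), and set $A=\xi_\ell[\alpha;\ell_i,\ldots,\ell_j]\colon\ell_i\to\ell_j$ and $B=\xi_\ell[\beta;\ell_j,\ldots,\ell_i]\colon\ell_j\to\ell_i$. By the definition of the shift operator of a concatenated path as the composition of the shift operators of its pieces, the monodromy at $\ell_i$ decomposes as $M_\ell(\ell_i,C(P,L))=B\circ A$, while the monodromy at $\ell_j$ decomposes as $M_\ell(\ell_j,C(P,L))=A\circ B$. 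Since $A$ is invertible by the previous step,
$$
M_\ell(\ell_j,C(P,L))=A\circ B=A\circ(B\circ A)\circ A^{-1}=A\circ M_\ell(\ell_i,C(P,L))\circ A^{-1}.
$$
Hence, if $M_\ell(\ell_i,C(P,L))$ is the identity on $\ell_i$, then $M_\ell(\ell_j,C(P,L))$ is the identity on $\ell_j$; as $j$ was arbitrary, all monodromies are trivial.

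The only non-formal point is the invertibility of the elementary shifts, which is where the general-position hypotheses on the framed cycle enter; I expect the routine bookkeeping there — verifying that each perspectivity center misses the two relevant framing lines and that consecutive framing lines are distinct — to be the main obstacle, together with keeping the composition order straight so that the conjugating factor is genuinely the forward shift $A$ and not $A^{-1}$.
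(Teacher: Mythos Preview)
Your argument is correct and is exactly the natural unpacking of the paper's one-line proof (``This statement follows directly from the definition of the monodromy''): splitting the cycle at $p_i$ and $p_j$ gives $M_\ell(\ell_i)=B\circ A$ and $M_\ell(\ell_j)=A\circ B$, and invertibility of the elementary shifts yields the conjugation. Your verification that each elementary shift is a genuine perspectivity (center off both framing lines, consecutive framing lines distinct) supplies precisely the bookkeeping the paper leaves implicit.
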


\begin{proof}
This statement follows directly from the definition of the monodromy.
\end{proof}

\begin{proposition}
The property of a monodromy to be trivial is a projective invariant.
\end{proposition}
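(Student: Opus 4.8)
The plan is to show that triviality of the monodromy operator is preserved under projective transformations by exhibiting, for any projective transformation $\varphi$ of $\r P^2$, a compatibility between the shift operators of a framed cycle $C(P,L)$ and those of its image $C(\varphi(P),\varphi(L))$. The key observation is that the elementary shift $\xi_\ell[p_ip_{i+1},\ell_i,\ell_{i+1}]$ is defined purely by the incidence operations ``$\cap$'' (meet) and ``$(\cdot,\cdot)$'' (join): one takes the point $p_ip_{i+1}\cap\ell$, joins it to $p$, and meets the result with $\ell_{i+1}$. Since $\varphi$ maps lines to lines and preserves incidence, it intertwines meet with meet and join with join, so for every $p\in\ell_i$ we have
$$
\xi_{\varphi(\ell)}[\varphi(p_i)\varphi(p_{i+1}),\varphi(\ell_i),\varphi(\ell_{i+1})]\big(\varphi(p)\big)
=\varphi\Big(\xi_\ell[p_ip_{i+1},\ell_i,\ell_{i+1}](p)\Big).
$$

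Next I would observe that $\varphi$ maps general position configurations to general position configurations (it preserves the cardinality of the set of pairwise intersection points and the non-incidence conditions $p_{i\pm1}\notin\ell_i$), so the monodromy of $C(\varphi(P),\varphi(L))$ at $\varphi(\ell_i)$ with auxiliary line $\varphi(\ell)$ is well defined. Composing the elementary intertwining relations around the whole cycle $p_ip_{i+1}\cdots p_{i-1}p_i$ gives
$$
M_{\varphi(\ell)}\big(\varphi(\ell_i),C(\varphi(P),\varphi(L))\big)\circ\varphi|_{\ell_i}
=\varphi|_{\ell_i}\circ M_\ell\big(\ell_i,C(P,L)\big),
$$
where $\varphi|_{\ell_i}\colon\ell_i\to\varphi(\ell_i)$ is the restriction of $\varphi$, which is a bijection. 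Hence $M_\ell(\ell_i,C(P,L))$ is the identity on $\ell_i$ if and only if $M_{\varphi(\ell)}(\varphi(\ell_i),C(\varphi(P),\varphi(L)))$ is the identity on $\varphi(\ell_i)$, which is exactly the claim.

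One residual point worth addressing: the definition of monodromy involves a choice of auxiliary line $\ell$, so ``the property of being trivial'' should be understood as independent of that choice, or else the statement is about the pair $(\ell_i,\ell)$ transforming to $(\varphi(\ell_i),\varphi(\ell))$. I would therefore either invoke Proposition~\ref{dif-mono} together with an analogous independence-of-$\ell$ remark, or simply phrase the projective invariance as: if $M_\ell(\ell_i,C(P,L))$ is trivial for some admissible $\ell$, then $M_{\varphi(\ell)}(\varphi(\ell_i),C(\varphi(P),\varphi(L)))$ is trivial. The main (and essentially only) obstacle is bookkeeping: one must carefully check that every auxiliary object used in defining the shift — the points $p_ip_{i+1}\cap\ell$, the lines through $p$, the base points of the linear structure on each $\ell_i$ — is transported correctly by $\varphi$, and that the general position hypotheses survive; none of this is deep, but it must be stated cleanly so that the composition around the cycle is legitimate.
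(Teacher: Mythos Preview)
Your proposal is correct and follows essentially the same approach as the paper: the paper's proof is the one-liner ``the property of a monodromy to be trivial is written in terms of Operations~I--III applied to $(P,L)$, which is a projective invariant property,'' and your argument is precisely the explicit unpacking of that sentence---you verify that the elementary shift is built from meet and join, deduce the intertwining relation, and compose around the cycle to get the conjugation $M_{\varphi(\ell)}\circ\varphi|_{\ell_i}=\varphi|_{\ell_i}\circ M_\ell$. Your discussion of the auxiliary line $\ell$ is a reasonable extra caution, though the paper treats the statement as referring to the pair $(\ell_i,\ell)$ being transported together (independence of $\ell$ is only established later in Corollary~\ref{monodromy-invariance}).
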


\begin{proof}
The property of a monodromy to be trivial is written in terms of Operations~I--III,
applied to $(P,L)$,
which is a projective invariant property.
\end{proof}

Later in Corollary~\ref{monodromy-invariance} we show that the property of the monodromy to be trivial
does not depend on the choice of the line $\ell$.

\begin{definition}\label{mcc}
We say that a framed cycle in general position satisfies the {\it monodromy cycle condition} if it has a trivial monodromy.
\end{definition}

\subsection{Equilibrium force-loads on framed cycles}

Let us define equilibrium force-loads for framed cycles.

\begin{definition}\label{almost--eq}
Let $C(P,L)$ be a framed cycle in general position.
\begin{itemize}
\item{A {\it force-load} $F$ on a framed cycle $C(P,L)$ is an assignment of

--- stresses $F_{i,i+1}=-F_{i+1,i}$ for every edge $p_ip_{i+1}$ where $1\le i,j\le k$;

--- framing forces $F_i$ (for $1\le i \le k$) whose lines of forces are $\ell_i$ respectively.
}
\vspace{2mm}

\item{A force-load $F$ is called an {\it equilibrium} force-load on $C(P,L)$ if, in addition, the
following equilibrium condition is fulfilled at every vertex $p_i$ (here index addition is mod~$k$):
$$
F_{i,i-1}+F_{i,i+1}+F_i=0.
$$
}
\vspace{2mm}

\item{A force-load $F$ is called an {\it almost equilibrium} force-load on $C(P,L)$ if the
equilibrium condition is fulfilled at every vertex of $C(P,L)$ except one.
}
\end{itemize}
\end{definition}

\begin{remark}
Let us consider the cycle $C(P)$ as a rigid body and let $F$ be its equilibrium force-load.
Then the total action of all forces $F_i$ lines leaves the rigid body $C(P)$ at rest.
This is equivalent to the fact
$$
F_1+F_2+\ldots+F_k=0
$$
in the projective statics.
\end{remark}

\begin{proposition}\label{ef-l}
Consider a framed cycle $C(P,L)$ in general position.
Let $C(P,L)$ admits a nonzero equilibrium force load
then every almost equilibrium force load on this cycle is an equilibrium force-load.
\end{proposition}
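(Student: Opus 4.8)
The plan is to linearise and then propagate equilibrium around the cycle. Fix $C(P,L)=\big((p_1,\dots,p_k),(\ell_1,\dots,\ell_k)\big)$ and let $W$ be the finite-dimensional real vector space of all force-loads on it, i.e. a choice of force along $p_ip_{i+1}$ on each edge together with a force along $\ell_i$ at each vertex. For a point $p$ write $T_p$ for the $2$-dimensional space of forces whose line of force passes through $p$, and introduce the linear residual map
$$
\Phi\colon W\longrightarrow \bigoplus_{i=1}^{k}T_{p_i},\qquad \Phi(F)_i=F_{i,i-1}+F_{i,i+1}+F_i,
$$
which is well defined because all three summands lie in $T_{p_i}$. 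Then the equilibrium force-loads are exactly $\ker\Phi$, and the almost-equilibrium force-loads with exceptional vertex $p_j$ are exactly $\{F:\Phi(F)_i=0\text{ for all }i\ne j\}$. The first thing I would record is the cancellation identity $\sum_i\Phi(F)_i=\sum_i F_i$, which holds for every force-load because the stress on an edge $p_ip_{i+1}$ enters $\Phi(F)_i$ as $F_{i,i+1}$ and $\Phi(F)_{i+1}$ as $F_{i+1,i}=-F_{i,i+1}$. In particular, if $F$ is almost-equilibrium with exceptional vertex $p_j$ then its single nonzero residual is the total framing force, $\Phi(F)_j=\sum_i F_i$, so it will be enough to prove that this total framing force vanishes.

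Next I would bring in general position. By Definition~\ref{fram-gp} the line $\ell_i$ contains neither $p_{i-1}$ nor $p_{i+1}$, and since the cycle is in general position $p_{i-1}p_i\ne p_ip_{i+1}$; hence inside the $2$-dimensional space $T_{p_i}$ the three one-dimensional subspaces of forces along $\ell_i$, along $p_{i-1}p_i$, and along $p_ip_{i+1}$ are pairwise transverse. Consequently the equilibrium equation at $p_i$ determines the ``outgoing'' edge force $F_{i,i+1}$ as a nonzero multiple of the ``incoming'' edge force $F_{i-1,i}$ corrected by a multiple of the framing force $F_i$ --- on magnitudes this is exactly the shift operator $\xi_\ell$ of Subsection~3.1. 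Propagating this once around the cycle expresses any would-be equilibrium force-load through a single free parameter subject to one closing-up relation, namely that the monodromy $M_\ell$ returns the starting datum to itself; this already shows $\dim\ker\Phi\le1$, and the hypothesis that a nonzero equilibrium force-load exists forces $\dim\ker\Phi=1$ and $M_\ell$ to be the identity. Finally, given an almost-equilibrium force-load $F$ with exceptional vertex $p_j$, I would run the same propagation starting from the edge $p_jp_{j+1}$: the equilibrium equations at $p_{j+1},\dots,p_{j-1}$ express all the edge forces and framing forces along the way in terms of the residual $R:=\Phi(F)_j$, and closing the loop back at $p_j$ turns the last equation into a relation that, once $M_\ell$ is trivial, forces $R=0$; since $R=\sum_i F_i$, the force-load $F$ is then genuinely in equilibrium.

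The step I expect to be the main obstacle is this last propagation: plain dimension counting only bounds the dimension of the space of almost-equilibrium force-loads from above and does not by itself yield equality with $\ker\Phi$, so one genuinely needs the structural input that the monodromy is the identity map, not merely that $\ker\Phi\ne0$. To make it precise I would fix a coordinate on each $\ell_i$ with origin $p_i$, rewrite the chain of equilibrium equations at $p_{j+1},\dots,p_{j-1}$ as one linear recursion whose composite is $M_\ell$, and read off that a nonzero residual $R$ would be incompatible with the transversality of $\ell_j$ to the two edge-lines at $p_j$ once the running edge force returning into $p_j$ is already pinned down by the (trivial) monodromy. I would also note, using Proposition~\ref{dif-mono} and the forthcoming Corollary~\ref{monodromy-invariance}, that neither the choice of the auxiliary line $\ell$ in the shift operators nor the choice of the exceptional vertex $p_j$ affects the conclusion.
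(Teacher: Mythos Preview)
Your route is considerably more elaborate than the paper's. The paper gives essentially a two-line argument: both the given nonzero equilibrium force-load $F$ and the almost-equilibrium $\hat F$ satisfy the equilibrium equations at the $k-1$ vertices other than the exceptional one $p_j$; since in general position each such equation determines the outgoing edge force and the framing force uniquely from the incoming edge force, propagation from a single edge shows the two force-loads are proportional, and hence $\hat F$ is in equilibrium. No residual map, no monodromy, no forward reference to Theorem~\ref{force=monodromy} or Corollary~\ref{monodromy-invariance} is invoked --- the mere existence of the nonzero $F$ already furnishes the one-dimensional family directly, so your detour through $M_\ell$ recovers the same propagation by heavier means.

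There is, however, a genuine gap in your closing step (which the paper's terse argument also glosses over). Propagation through the equilibrium conditions at $p_{j+1},\dots,p_{j-1}$ determines all edge stresses and all framing forces $F_i$ with $i\ne j$ in terms of the single edge force $F_{j,j+1}$, but it does \emph{not} determine the framing force $F_j$ at the exceptional vertex, since no equation constrains it. Concretely, if $F$ is a nonzero equilibrium force-load and $G\ne 0$ is any force along $\ell_j$, then replacing $F_j$ by $F_j+G$ yields an almost-equilibrium force-load with residual $\Phi(\hat F)_j=G\ne 0$; it is neither in equilibrium nor proportional to $F$. Your assertion that ``closing the loop back at $p_j$ \dots forces $R=0$'' fails exactly here: triviality of $M_\ell$ only pins down $F_{j,j-1}$ as the correct multiple of $F_{j,j+1}$, leaving $R=F_{j,j-1}+F_{j,j+1}+F_j$ free to vary with the unconstrained $F_j$. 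In the paper's actual applications (Proposition~\ref{quant--normal} and the inductive construction in Theorem~\ref{tensegrity=quantiza}) the framing datum at the exceptional vertex is already prescribed by the surrounding construction, so the issue does not bite there; but as a proof of the proposition in the generality stated, both arguments need either an additional hypothesis fixing $F_j$ or the weaker conclusion that the \emph{edge stresses} of $\hat F$ extend to an equilibrium force-load.
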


\begin{proof}
Let $F$ be a nonzero equilibrium force-load on $C(P,L)$, and let $\hat F$ be an almost equilibrium force load on $C(P,L)$.
Since the equilibrium conditions fulfilled simultaneously for $F$ and $F'$ at all vertices of the cycle except one,
the force-load $F$ is proportional to the force-load $F'$. Hence $F'$ is an equilibrium force-load.
\end{proof}

\subsection{Projection operations on framed cycles and their properties}\label{Projection_operation}
In this subsection we introduce the notion of operations on framed cycles, which is
the main tool of induction in the proof of Proposition~\ref{projection-property} below.
Further we study their basic properties.

\subsubsection{Projection operations on framed cycles}
Consider a framed cycle $C(P,L)$ with $k\ge 4$ vertices.
Let also $i\in \{1,2,\ldots, k\}$.
Denote
$$
\begin{array}{l}
p_i'=p_{i-1}p_i\cap p_{i+1}p_{i+2};
\\
\ell_i'=p_i'(\ell_i\cap \ell_{i+1}).
\end{array}
$$
(Here as before, we set $p_0=p_k$, $p_{k+1}=p_1$, and $p_{k+2}=p_2$.)

Set
$$
\omega_i(C(P,L))= C(P',L'),
$$
where
\begin{equation}\label{e1}
\begin{array}{l}
P'=(p_1,p_2,\ldots,p_{i-1},p_i',p_{i+2},\ldots, p_{k}),
\quad \hbox{and}
\\
L'=(\ell_1,\ell_2,\ldots,\ell_{i-1},\ell_i',\ell_{i+2},\ldots, \ell_{k}).
\end{array}
\end{equation}

\begin{figure}
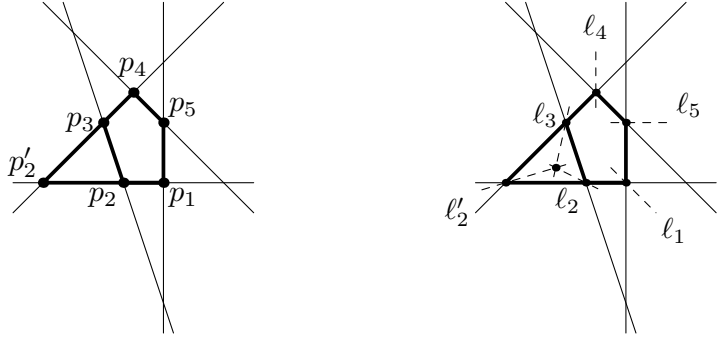

$$\epsfbox{pic.1}\qquad\qquad\qquad \epsfbox{pic.2}$$
\caption{A projection operation $\omega_2$.
(The point $p_2'=p_{1}p_2\cap p_{3}p_{4}$ is on the left, and
the line $\ell_2'=(p_2',\ell_2\cap \ell_{3})$ is on the right.)
}\label{pic.1}
\end{figure}

On Figure~\ref{pic.1} we consider an example of a projection
operation $\omega_2$ applied to the framed cycle
$C\big((p_1,\ldots p_5),(\ell_1,\ldots, \ell_5)\big)$.
We show the points $p_1,\ldots p_5,p_2'$ on the left,
and the lines $\ell_1,\ldots, \ell_5, \ell_2'$ on the right.

\subsubsection{Basic properties of projection operators}
Let us collect together several basic properties of projective operations $\omega_i(C(P,L))$.

\begin{proposition}\label{projection-property}
Let $C(P,L)$ be a framed cycle  in general position on $k\ge 4$ vertices and let $\omega_i$ be one of its projections.
Then we have
\\
$($i$)$ The cycle $\omega_i(C(P,L))$ is in general position.
\\
$($ii$)$ Let $j\notin \{i, i+1\}$ and let $\ell$ be a line that contains neither vertices of $C(P,L)$ nor $p_i'$.
Then we have
$$
M_\ell\big(\ell_j,C(P,L)\big)=M_\ell\big(\ell_j,\omega_i(C(P,L))\big).
$$
\\
$($iii$)$ The existence of a nonzero equilibrium force-load for $C(P,L)$ is equivalent to
the existence of a nonzero equilibrium force-load for $\omega_i(C(P,L))$.
\end{proposition}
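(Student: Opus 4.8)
The plan is to prove the three parts in the order given, since (ii) will feed into both (i) and (iii), and (iii) is the substantive claim. For part (ii), I would observe that the projection operation $\omega_i$ only modifies the data at three consecutive vertices $p_{i-1}, p_i, p_{i+1}$, replacing the subpath $p_{i-1}p_ip_{i+1}$ by the single ``corner'' $p_{i-1}p_i'p_{i+1}$ with $p_i' = p_{i-1}p_i \cap p_{i+1}p_{i+2}$ and $\ell_i' = p_i'(\ell_i \cap \ell_{i+1})$. The monodromy $M_\ell(\ell_j, C(P,L))$ for $j \notin \{i, i+1\}$ is a composition of elementary shifts $\xi_\ell$ around the whole cycle; the only segment that changes is the composite $\xi_\ell[p_ip_{i+1}; \ell_i, \ell_{i+1}] \circ \xi_\ell[p_{i-1}p_i; \ell_{i-1}, \ell_i] : \ell_{i-1} \to \ell_{i+1}$, which after the surgery becomes the single shift $\xi_\ell[p_{i-1}p_i'p_{i+1}\text{-edge}] $ through the new vertex. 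So the core of (ii) is the identity
$$
\xi_\ell[p_i'p_{i+1}; \ell_i', \ell_{i+1}] \circ \xi_\ell[p_{i-1}p_i'; \ell_{i-1}, \ell_i'] = \xi_\ell[p_ip_{i+1}; \ell_i, \ell_{i+1}] \circ \xi_\ell[p_{i-1}p_i; \ell_{i-1}, \ell_i]
$$
as maps $\ell_{i-1} \to \ell_{i+1}$. This is a projective-incidence computation: unwinding the definition of $\xi_\ell$, both sides send $p \in \ell_{i-1}$ to a point of $\ell_{i+1}$ obtained by successive joins with the points $p_{i-1}p_i \cap \ell$ and $p_{i+1}p_{i+2}\cap\ell$ and meets with $\ell_i$ (resp. $\ell_i'$), and the key point is that $p_{i-1}p_i = p_{i-1}p_i'$ (both equal the line through $p_{i-1}$ and $p_i$, since $p_i' \in p_{i-1}p_i$) and $p_{i+1}p_{i+2} = p_i'p_{i+1}$ (since $p_i' \in p_{i+1}p_{i+2}$), so that the two elementary shifts being composed use the same two ``direction points'' on $\ell$ in both cases; the intermediate line $\ell_i$ versus $\ell_i'$ then drops out because composition of the relevant perspectivities is associative and the intermediate choice does not affect the composite perspectivity. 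The cleanest way to see this is to work in an affine chart with $\ell$ at infinity, where each $\xi$ becomes ``draw the line parallel to the edge direction,'' and the identity becomes the elementary statement that sliding along $p_{i-1}p_i$ then along $p_{i+1}p_{i+2}$ is unchanged if we replace the corner point $p_i$ by any other point $p_i'$ on the same two lines, provided we update the framing line accordingly — and updating the framing line is exactly what forces the choice $\ell_i' = p_i'(\ell_i \cap \ell_{i+1})$, because $\ell_i \cap \ell_{i+1}$ is the fixed point of the middle perspectivity. I expect this verification to be the main obstacle, mostly in bookkeeping: one must be careful that $\ell$ avoids $p_i'$ (which is hypothesized) and that $\ell_i'$ really does not pass through $p_{i-1}$ or $p_{i+1}$ so the shifts are defined — and this last fact is part of what (i) must supply.

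For part (i), I would check the three general-position requirements for $\omega_i(C(P,L)) = C(P', L')$ directly from the definitions. The point set $P'$ differs from $P$ only in that $p_{i-1}, p_i, p_{i+1}$ become $p_{i-1}, p_i', p_{i+1}$; the new edges are $p_{i-1}p_i'$ and $p_i'p_{i+1}$. By construction $p_{i-1}p_i' = p_{i-1}p_i$ and $p_i'p_{i+1} = p_{i+1}p_{i+2}$, so the multiset of lines through the edges of $C(P')$ is the same as that for $C(P)$ with one repetition: the line $p_{i+1}p_{i+2}$ now occurs as an edge-line twice (once as $p_i'p_{i+1}$, once as the old $p_{i+1}p_{i+2}$). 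Wait — that would violate general position, so in fact the cycle $C(P')$ has $k-1$ vertices $p_1, \dots, p_{i-1}, p_i', p_{i+2}, \dots, p_k$ and its edge-lines are: all old edge-lines except those of $p_{i-1}p_i$ and $p_ip_{i+1}$, together with $p_{i-1}p_i' = p_{i-1}p_i$ and $p_i'p_{i+2}$. Hmm, the indexing in (\ref{e1}) shows the new cycle skips from $p_i'$ to $p_{i+2}$, so the new edge is $p_i'p_{i+2}$, not $p_i'p_{i+1}$; I would re-read this carefully. Either way, the edge-lines of the new cycle form a sub-collection of, or a small modification of, the edge-lines of the old one together with one genuinely new line, and general position of the old cycle (a hypothesis) plus the stipulated non-degeneracy conditions (the $\ne$ conditions implicit in ``general position'' — $\ell_i$ avoiding $p_{i\pm1}$, etc.) give that the new collection of $\binom{k-1}{2}$ pairwise intersections is still discrete with the right count. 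I would phrase this as: the new edge-line is in general position with the old ones because $C(P,L)$ in general position means in particular that no three edge-lines are concurrent, and the new line $p_i'p_{i+2}$ is determined by two points each lying on old edge-lines in generic fashion; a short case analysis handles the finitely many incidences that must be excluded. The framing condition for $\omega_i(C(P,L))$ — that $\ell_i'$ avoids $p_{i-1}$ and $p_{i+2}$ — follows since $\ell_i'$ passes through $p_i'$ and $\ell_i \cap \ell_{i+1}$, and by general position of the original framed cycle these two points are positioned so that their join misses the relevant vertices; again a brief incidence check.

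For part (iii), I would use the equivalence between nonzero equilibrium force-loads on a framed cycle and triviality of its monodromy — this is the content of Theorem~\ref{force=monodromy} (``Monodromies of framed cycles'', Section 3), which the excerpt tells us will be proved, and which I may assume. By part (i), $\omega_i(C(P,L))$ is again a framed cycle in general position, so the theorem applies to it. By part (ii), the monodromy $M_\ell(\ell_j, C(P,L)) = M_\ell(\ell_j, \omega_i(C(P,L)))$ for any $j \notin \{i, i+1\}$ (and such $j$ exists since $k \ge 4$), and by Proposition~\ref{dif-mono} triviality at one vertex is equivalent to triviality at all vertices; hence $C(P,L)$ has trivial monodromy if and only if $\omega_i(C(P,L))$ does. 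Combining: $C(P,L)$ admits a nonzero equilibrium force-load $\iff$ its monodromy is trivial $\iff$ the monodromy of $\omega_i(C(P,L))$ is trivial $\iff$ $\omega_i(C(P,L))$ admits a nonzero equilibrium force-load. The only subtlety is that the line $\ell$ witnessing the monodromy must avoid the vertices of both cycles and the point $p_i'$; since the forbidden set is finite, such an $\ell$ exists, and by the remark that triviality is independent of $\ell$ (promised as Corollary~\ref{monodromy-invariance}, or provable ad hoc here by the projective-invariance argument already given for monodromy) this is harmless. If one prefers to avoid invoking Theorem~\ref{force=monodromy} for a logically self-contained argument, an alternative for (iii) is to construct the equivalence of force-loads directly: given an equilibrium force-load on $C(P,L)$, the forces $F_{i-1,i}$ and $F_{i,i+1}$ along the two old edges at $p_i$ combine, and one redistributes them along the new edges at $p_i'$ using the parallelogram rule (Proposition on force summation), much as in the proof of Proposition~\ref{HQ-proposition} for $\hf$-surgeries — but the monodromy route is cleaner and I would present that.
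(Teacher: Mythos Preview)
Your plan for (iii) is circular. You propose to deduce the equivalence of nonzero equilibrium force-loads from Theorem~\ref{force=monodromy}, but in the paper Theorem~\ref{force=monodromy} is proved by induction on the number of vertices, and the inductive step is precisely Proposition~\ref{projection-property}. So you cannot invoke Theorem~\ref{force=monodromy} here. The ``alternative'' you mention and then discard --- a direct force-load argument in the style of Proposition~\ref{HQ-proposition} --- is exactly what the paper does: the passage from $C(P,L)$ to $\omega_i(C(P,L))$ is literally an $\hf$-surgery on the subframework with vertices $(p_i,p_{i+1},p_{i-1},A_1,p_{i+2},A_3)$ (where $A_1\in\ell_i$, $A_3\in\ell_{i+1}$), and Proposition~\ref{HQ-proposition} gives (iii) immediately. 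You should promote that to the main argument.

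There is also an indexing slip that propagates through your treatment of (i) and (ii). The projection $\omega_i$ removes \emph{both} $p_i$ and $p_{i+1}$ and inserts $p_i'$, so the new cycle is $\ldots,p_{i-1},p_i',p_{i+2},\ldots$ with $k-1$ vertices. Consequently the identity you need in (ii) compares a composite of \emph{three} shifts $\ell_{i-1}\to\ell_i\to\ell_{i+1}\to\ell_{i+2}$ to a composite of \emph{two} shifts $\ell_{i-1}\to\ell_i'\to\ell_{i+2}$, not two against one as you wrote; the paper establishes this via a homothety argument in the affine chart with $\ell$ at infinity (your instinct to pass to that chart is right). And in (i), since $p_{i-1}p_i'=p_{i-1}p_i$ and $p_i'p_{i+2}=p_{i+1}p_{i+2}$, the edge-line set of $C(P')$ is exactly that of $C(P)$ with the single line $p_ip_{i+1}$ deleted --- there is no ``genuinely new line'' to worry about, and general position of $C(P')$ is immediate. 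The only remaining work for (i) is checking that $\ell_i'$ avoids $p_{i-1}$ and $p_{i+2}$, which is the short incidence check you outlined.
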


\begin{proof} Let $\omega_i (C(P,L))=C(P',L')$, here we follow the notation of~(\ref{e1}) above.

{
\noindent
{\it Item $($i$)$}. Since $C(P,L)$ is in general position, the cycle $C(P)$ is in general position.
The set of lines through all edges in $C(P')$ coincide with the set of lines through edges in $C(P')$ minus
the line $p_ip_{i+1}$.
Hence the number of intersection points of lines through all edges in $C(P')$ is
$$
\frac{n(n-1)}{2}-n-1=\frac{(n-1)(n-2)}{2},
$$
So these lines are in general position. Therefore, $C(P')$ is in general position.
}

\vspace{2mm}

Recall that the only new line in the framing $L'$ is the line
$$
\ell_i'=(p_i',\ell_i\cap \ell_{i+1})
$$
through the point $p_i'=p_{i-1}p_i\cap p_{i+1}p_{i+2}.$
Denote $B=\ell_i \cap \ell_{i+1}$.

First, let us show that the line $\ell_i'$ does not contain $p_{i-1}$.
Since the line $\ell_{i+1}$ does not contain $p_{i}$, we have $B\ne p_i$.
Further since the line $\ell_{i}$ does not contain $p_{i-1}$, the point $B$ is not in the line of the edge $p_{i-1}p_i$.
Therefore $\ell_i'$ does not contain the edge $p_{i-1}p_i$.
Finally, since $C(P)$ is in general position, we have $p_i'=\ell_i'\cap p_{i-1}p_i\ne p_{i-1}$
and therefore the point $p_{i-1}$ is not in $\ell_i'$.

Secondly, by the same reasons $\ell_i'$ does not contain $p_{i+2}$.
Finally, all the other genericity conditions for the other lines of $C(P',L')$ are as
for the lines of the framed cycle $C(P,L)$.
Hence all genericity conditions are fulfilled. Therefore, the cycle $C(P',L')$ is in general position.

\vspace{2mm}
{
\noindent
{\it Item $($ii$)$}.
Consider two operators sending $\ell_{i-1}$ to $\ell_{i+2}$:
$$
\xi_\ell[p_{i-1}p_ip_{i+1}p_{i+2};\ell_{i-1},\ell_i,\ell_{i+1},\ell_{i+2}],
\quad \hbox{and} \quad
\xi_\ell[p_{i-1}p_i'p_{i+2};\ell_{i-1},\ell_i',\ell_{i+2}].
$$
}
Let us prove that they coincide.

Denote
$$
q_1=\ell\cap (p_{i-1},p_i),
\quad
q_2=\ell\cap (p_{i},p_{i+1}),
\quad
q_3=\ell\cap (p_{i+1},p_{i+2}).
$$

Let $q$ be a point of $\ell_{i-1}$.
Set
$$
A_1=\ell_i \cap qq_1,\quad
A_2=\ell_i' \cap qq_1 ,\quad
A_3=\ell_{i+1}\cap A_2q_3,\quad
\hbox{and}
\quad
$$
Recall also that $B=\ell_i'\cap\ell_i=\ell_i'\cap\ell_{i+1}.$

Without loss of generality we consider the affine chart with the line $\ell$ at infinity.
Now $A_1A_2$ is parallel to $p_{i-1}p_{i}$ and
$A_2A_3$ is parallel to $p_{i+1}p_{i+2}$.
Let us prove that
$A_1A_3$ is parallel to $p_{i}p_{i+1}$.
(All the points and lines of the affine chart are shown on Figure~\ref{pic.5}.
Note that the points $q_1$, $q_2$, and $q_3$ are at the line $\ell$ at infinity.
hence they are not in the affine chart.)

\begin{figure}
$$\epsfbox{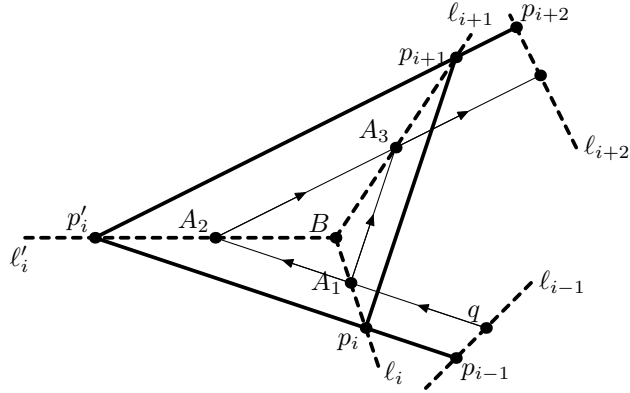}$$
\caption{Projection operation does not change monodromy operators, since
$
\xi_\ell[p_{i-1}p_i'p_{i+2};\ell_{i-1},\ell_i',\ell_{i+2}](q)
=\xi[p_{i-1}p_ip_{i+1}p_{i+2};\ell_{i-1},\ell_i,\ell_{i+1},\ell_{i+2}](q)$
for every $q\in \ell_{i-1}$.
}\label{pic.5}
\end{figure}

The triangle $p_i'p_iB$ is homothetic to the triangle $A_2A_1B$, and the coefficient of homothety is $\frac{|p_i'B|}{|A_2B|}$.
The triangle $p_i'p_{i+1}B$ is homothetic to the triangle $A_2A_3B$, and the coefficient of homothety is $\frac{|p_i'B|}{|A_2B|}$.
Hence the quadrangle $p_ip_i'p_{i+1}B$ is homothetic to the quadrangle $A_1A_2A_3B$, and the coefficient of homothety is $\frac{|p_i'B|}{|A_2B|}$.
Therefore, $p_ip_{i+1}$ is parallel to $A_1A_3$.
Hence we have
$$
\begin{aligned}
\xi_\ell[p_{i-1}p_i'p_{i+2};\ell_{i-1},\ell_i',\ell_{i+2}](q)
&=
\xi_\ell[p_i'p_{i+2};\ell_i'\ell_{i+2}]\circ\xi_\ell[p_{i-1}p_i';\ell_{i-1}\ell_i'](q)\\
&=
\xi[p_{i+1}p_{i+2};\ell_{i+1},\ell_{i+2}]{\circ}
\xi[p_ip_{i+1};\ell_i,\ell_{i+1}]{\circ}
\xi[p_{i-1}p_i;\ell_{i-1},\ell_i](q)
\\
&=\xi[p_{i-1}p_ip_{i+1}p_{i+2};\ell_{i-1},\ell_i,\ell_{i+1},\ell_{i+2}](q)
\end{aligned}
$$
Therefore,
$$
\xi_\ell[p_{i-1}p_i'p_{i+2};\ell_{i-1},\ell_i',\ell_{i+2}]=\xi[p_{i-1}p_ip_{i+1}p_{i+2};\ell_{i-1},\ell_i,\ell_{i+1},\ell_{i+2}].
$$

Since all the rest shift operators defining the monodromy $M_\ell\big(\ell_j,C(P,L)\big)$
are invariant under the projection $\omega_i$, we have
$$
M_\ell\big(\ell_j,C(P,L)\big)=M_\ell\big(\ell_j,\omega_i(C(P,L))\big).
$$

\vspace{2mm}
{
\noindent
{\it Item $($iii$)$}. Here the cycle $C(P',L')$ is obtained from $C(P,L)$ via an $\hf$-surgery
sending a subgraph $G_H(P)$ of the framework $G(P)$  on vertices
$(p_i,p_{i+1},p_{i-1},A_1,p_{i+2},A_3)$
to the graph $G_\Phi(P)$ on vertices $(p_i',B,p_{i-1},A_1,p_{i+2},A_3)$.
(Here $A_1$ and $A_3$ are some points on $\ell_i$ and $\ell_{i+1}$, and
$B=\ell_i\cap\ell_{i+1}$, see also Figure~\ref{pic.5}).
Now the statement follows directly from Proposition~\ref{HQ-proposition}.
}
\end{proof}

\subsection{Monodromy condition for a nonzero equilibrium force-load}

Let us formulate a necessary and sufficient condition of
the existence of a nonzero equilibrium force-load for a given framed cycle.

\begin{theorem}\label{force=monodromy}
Let $C(P,L)$ be a framed cycle in general position;
let $\ell$ be a line that does not contain intersection points of any pair of edges for $C(P,L)$;
and let $1\le i\le k$.
Then the monodromy operator $M_\ell(\ell_i,C(P,L))$ is trivial
if and only if
there exists a nonzero equilibrium force-load for $C(P,L)$.
\end{theorem}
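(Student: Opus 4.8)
The plan is to reduce the theorem to the base case of a triangle ($k=3$) by repeatedly applying the projection operations $\omega_i$, using Proposition~\ref{projection-property} to guarantee that both the triviality of the monodromy and the existence of a nonzero equilibrium force-load are preserved at each step. Item~(i) of that proposition keeps us in the class of framed cycles in general position, item~(iii) preserves the existence of nonzero equilibrium force-loads, and item~(ii) preserves the monodromy operators $M_\ell(\ell_j,C(P,L))$ for the untouched framing lines $\ell_j$. Since by Proposition~\ref{dif-mono} triviality at one $\ell_i$ is equivalent to triviality at all $\ell_j$, it suffices to track the monodromy at a single fixed framing line that survives all the projections. So the first step is: fix $\ell$ as in the hypothesis, choose a framing line to follow, and apply $\omega_i$'s (each time choosing $i$ so that the followed line is not destroyed and so that $\ell$ avoids the new vertex $p_i'$ — possible since only finitely many bad positions occur) until we reach a framed triangle in general position whose monodromy triviality and nonzero-equilibrium-existence are each equivalent to the original ones.

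\emph{The base case.} For a framed triangle $C(P,L)=\big((p_1,p_2,p_3),(\ell_1,\ell_2,\ell_3)\big)$ in general position one computes directly. The monodromy $M_\ell(\ell_1,C(P,L))$ is the composition $\xi_\ell[p_3p_1;\ell_3,\ell_1]\circ\xi_\ell[p_2p_3;\ell_2,\ell_3]\circ\xi_\ell[p_1p_2;\ell_1,\ell_2]$, a projective self-map of $\ell_1$ fixing $p_1$, hence a one-dimensional projective transformation; it is the identity iff it fixes one further point, i.e. iff the ``parallel transport triangle'' closes up. On the other side, an equilibrium force-load on the triangle assigns edge stresses $F_{i,i+1}$ along $p_ip_{i+1}$ and framing forces $F_i$ along $\ell_i$ with $F_{i,i-1}+F_{i,i+1}+F_i=0$ at each vertex; eliminating the three edge forces yields a single linear compatibility condition on $(F_1,F_2,F_3)$, and a nonzero solution exists precisely when that condition is degenerate. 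The computation in the proof of Proposition~\ref{projection-property}, Item~(ii) (the homothety/parallelism argument, read in the affine chart with $\ell$ at infinity) is exactly the geometric identity that equates ``the transport triangle closes'' with ``the force balance is solvable'': a point $p\in\ell_1$ fixed by the monodromy corresponds to the direction data of a consistent system of parallel edge-forces, i.e. to a nonzero $(F_1,F_2,F_3)$. I would make this correspondence explicit via the interior-product description $\iota_V F_{i,j}=\omega_{i,j}(dp_i-dp_j)$ from the remark after the projective tensegrity definition.

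\emph{Assembling the two directions.} Given the base case, the two implications follow formally. If $M_\ell(\ell_i,C(P,L))$ is trivial, then by item~(ii) it stays trivial under each $\omega_i$, so the terminal triangle has trivial monodromy, hence (base case) a nonzero equilibrium force-load, hence (item~(iii), run backwards up the chain of surgeries) so does $C(P,L)$. Conversely a nonzero equilibrium force-load on $C(P,L)$ pushes down through the $\omega_i$'s to the triangle, forces its monodromy to be trivial, and then item~(ii) pulls triviality back up to $\ell_i$. One subtlety worth a sentence: to start the induction we need $k\ge 4$ so that $\omega_i$ is defined, which is fine, and we need the chosen auxiliary line $\ell$ to remain admissible (avoiding all edge-intersection points and all the successively created points $p_i'$); since at each stage this excludes only a finite set, and since triviality of the monodromy is independent of the admissible choice of $\ell$ (as will be recorded in Corollary~\ref{monodromy-invariance}), we may simply fix one good $\ell$ at the outset, or re-choose $\ell$ after each projection.

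\textbf{Main obstacle.} The routine-looking but genuinely delicate part is the bookkeeping in the base case: correctly matching the \emph{sign/scale} data of the projective monodromy map on $\ell_1$ with the \emph{magnitudes} of an equilibrium force-load, so that ``fixed point of the monodromy'' $\Longleftrightarrow$ ``nonzero solution of the balance equations'' holds as a biconditional rather than just one direction. Establishing that a nonzero equilibrium force-load is determined up to scale (so Proposition~\ref{ef-l} applies and the correspondence is a genuine bijection on projectivized solution spaces) is where the care is needed; the reduction machinery via $\omega_i$ is, by contrast, handed to us wholesale by Proposition~\ref{projection-property}.
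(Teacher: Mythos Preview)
Your inductive reduction via the projection operations $\omega_i$ and Proposition~\ref{projection-property} is exactly the paper's argument, so the global structure is correct.

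Where you diverge is the base case. You propose to match fixed points of the triangle monodromy directly with nonzero solutions of the balance system, and you correctly flag the sign/scale bookkeeping as the delicate part. The paper sidesteps this entirely by inserting an intermediate condition: for a framed triangle in general position, \emph{the three framing lines $\ell_1,\ell_2,\ell_3$ are concurrent}. Lemma~\ref{triangular} shows (a)~concurrence $\iff$ (b)~existence of a nonzero equilibrium force-load, and (a)~concurrence $\iff$ (c)~trivial monodromy, each by a short direct computation. The equivalence (a)$\iff$(b) uses only that $F_1+F_2+F_3=0$ forces the three lines of force through a common point, plus an explicit construction of the edge stresses when a common point $B$ exists (via $F_i=a_i\,dp_i\wedge dB$). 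The equivalence (a)$\iff$(c) is even shorter: if $B=\ell_1\cap\ell_2\in\ell_3$ then $B$ is visibly a second fixed point of the monodromy on $\ell_1$; if $B\notin\ell_3$ one tracks $B$ once around and sees it does not return. So your ``main obstacle'' evaporates once you route the argument through concurrence rather than attempting a direct monodromy/force dictionary.

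One small correction on the admissibility of $\ell$: you do not need to re-choose $\ell$ or invoke Corollary~\ref{monodromy-invariance} (which would be circular, since that corollary is deduced \emph{from} the present theorem). The hypothesis already says $\ell$ avoids every pairwise intersection of edge-lines of $C(P,L)$. After a projection $\omega_i$, the edge-lines of the new cycle are a subset of the old ones, so their pairwise intersections are too; and the new vertex $p_i'=p_{i-1}p_i\cap p_{i+1}p_{i+2}$ is itself one of those original intersection points. Hence the same $\ell$ remains admissible all the way down to the triangle.
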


We start with the following lemma.

\begin{lemma}\label{triangular}
Consider a triangular cycle $C(P,L)$ in general position, and let $\ell$ be a line that does not contain the vertices of $C(P,L)$.
Then the following three statements are equivalent:

$($a$)$ the lines $\ell_1$, $\ell_2$, $\ell_3$ meet in a point;

$($b$)$ there exists a nonzero equilibrium force-load for $C(P,L)$;

$($c$)$ the monodromy operator $M_\ell(\ell_i,C(P,L))$ is trivial for every $i\in\{1,2,3\}$.
\end{lemma}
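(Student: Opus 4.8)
The plan is to prove the cycle of implications $(a)\Rightarrow(b)\Rightarrow(c)\Rightarrow(a)$, working with a triangle $C(P,L)$ on vertices $p_1,p_2,p_3$ with framing lines $\ell_1,\ell_2,\ell_3$. For $(a)\Rightarrow(b)$, suppose $\ell_1,\ell_2,\ell_3$ pass through a common point $t$. I would build an equilibrium force-load directly: take the three edge-stresses $F_{1,2},F_{2,3},F_{3,1}$ to be forces along the triangle's sides, and the framing forces $F_1,F_2,F_3$ along $\ell_1,\ell_2,\ell_3$. At each vertex $p_i$ the equilibrium condition $F_{i,i-1}+F_{i,i+1}+F_i=0$ requires the three forces meeting at $p_i$ to sum to zero; since two of them are edge forces through $p_i$ and the third is the framing force $F_i$ through $p_i$, their lines of force all pass through $p_i$, so (as $2$-forms $dp_i\wedge(\cdot)$) the condition is just a linear relation among the three ``directions''. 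Using the projective statics of Subsection~2.1, I would argue that such a configuration of six forces can be chosen nonzero precisely because the three side-lines of the triangle together with the three lines $\ell_i$ through the common point $t$ form the classical picture of a triangle with concurrent cevians (or its degenerate analogue), which always supports a nonzero self-stress — this is the projective version of the elementary planar fact that a triangle with three concurrent cevian-type applied forces is in equilibrium. Concretely, one picks the framing force $F_i$ along $\ell_i$, then the two edge forces at $p_i$ are forced up to scale, and propagating around the triangle the consistency condition to return to the start is exactly the concurrency $\ell_1\cap\ell_2\cap\ell_3=\mathit{true}$.

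For $(b)\Rightarrow(c)$: this is where I would invoke the machinery already set up. I claim that a nonzero equilibrium force-load on $C(P,L)$ forces the monodromy $M_\ell(\ell_i,C(P,L))$ to be the identity. The idea is that the shift operator $\xi_\ell[p_ip_{i+1};\ell_i,\ell_{i+1}]$ tracks, for each line of force through $p_i$ lying in $\ell_i$, the line of force of the ``residual'' force after subtracting the edge contribution at $p_{i+1}$ — i.e.\ the shift operators encode how framing forces propagate along the cycle under the equilibrium relations. Starting from the framing force $F_i$ with line of force $\ell_i$ and walking once around the triangle, equilibrium at each vertex determines the next framing line, and returning to $p_i$ the equilibrium there closes up consistently exactly when the monodromy fixes $\ell_i$ pointwise. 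Since a nonzero equilibrium force-load exists, this closing-up holds for the actual force-load, and then Proposition~\ref{ef-l} (every almost equilibrium force-load is equilibrium) upgrades this to triviality of the monodromy as an operator on all of $\ell_i$, not just at one point; Proposition~\ref{dif-mono} then gives it for every $i$.

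For $(c)\Rightarrow(a)$: assume $M_\ell(\ell_i,C(P,L))$ is trivial. I would compute the monodromy explicitly in an affine chart with $\ell$ at infinity, where each $\xi_\ell$ is ``draw the line through the point parallel to the corresponding edge.'' Composing the three shifts around the triangle gives an affine (in fact projective-linear, fixing $p_i$) self-map of $\ell_i$, and triviality is one scalar equation. Tracking a single generic point of $\ell_i$ around the triangle and demanding it return to itself, the resulting relation simplifies — using that the three side directions of the triangle are the relevant data — to the statement that $\ell_1,\ell_2,\ell_3$ are concurrent; the degenerate/parallel subcases are absorbed by the $\mathit{true}$-conventions of Subsection~2.3. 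The main obstacle I anticipate is $(b)\Rightarrow(c)$: making precise the dictionary between ``equilibrium force-load'' and ``monodromy'' requires carefully identifying the line of force of the partial sum $F_i+F_{i,i+1}$ with the image of a point under $\xi_\ell$, and checking that the affine-chart normalization in the definition of $\xi_\ell$ (the auxiliary line $\ell$) is consistent with the parallelogram rule for force summation from the Proposition in Subsection~2.1. Once that correspondence is nailed down for a single edge, the triangle case follows by composing three copies and invoking Proposition~\ref{ef-l}.
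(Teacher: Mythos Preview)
Your cycle $(a)\Rightarrow(b)\Rightarrow(c)\Rightarrow(a)$ differs from the paper's structure, which proves the two bi-implications $(a)\Leftrightarrow(b)$ and $(a)\Leftrightarrow(c)$ separately, using $(a)$ as the hub. This matters because the step you correctly flag as the main obstacle, $(b)\Rightarrow(c)$, is precisely the one the paper never attempts.

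The gap in your $(b)\Rightarrow(c)$ is real. You want a dictionary identifying points of $\ell_i$ with (scalings of) almost-equilibrium force-loads, so that the monodromy returning a point to itself is the same as the force-load closing up. Such a correspondence does exist in spirit (it is essentially a Maxwell--Cremona reciprocal), but nothing in the paper sets it up, and your sketch does not pin down which point of $\ell_i$ a given force-load is supposed to correspond to. Your appeal to Proposition~\ref{ef-l} is also misplaced: that proposition is a statement about force-loads, not about the monodromy, and cannot be invoked until the dictionary is in hand. Moreover, once you know the monodromy is an affine self-map of $\ell_i$ fixing $p_i$ (a scaling), a single additional fixed point already forces triviality, so Proposition~\ref{ef-l} would be unnecessary anyway.

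The paper's route is much shorter. For $(b)\Rightarrow(a)$: summing the three vertex equations gives $F_1+F_2+F_3=0$, so the line of force of $F_1=-(F_2+F_3)$ passes through $\ell_2\cap\ell_3$; hence $\ell_1,\ell_2,\ell_3$ are concurrent. For $(a)\Rightarrow(b)$ the paper writes down explicit $2$-forms: with $B$ the common point, set $F_i=a_i\,dp_i\wedge dB$ and $F_{i,j}=-\alpha a_ia_j\,dp_i\wedge dp_j$ for suitable scalars and verify equilibrium directly. For $(a)\Leftrightarrow(c)$ the key observation you are missing is that the concurrency point $B=\ell_1\cap\ell_2\cap\ell_3$ is fixed by \emph{every} shift operator: since $B\in\ell_{i+1}$, the intersection of $\ell_{i+1}$ with any line through $B$ is $B$ itself. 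Thus the monodromy fixes both $p_i$ and $B\neq p_i$, and being a scaling centered at $p_i$, it is the identity. For the converse, if $B=\ell_1\cap\ell_2\notin\ell_3$, one tracks $B$ once around and checks it does not return. No force-load interpretation of the monodromy is ever needed.
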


\begin{proof} $($a$){\iff}($b$)$.
The equivalence of the first and the third statements is a classical statement.
Suppose that there exists a nonzero equilibrium force-load $F$ on $C(P,L)$.
Hence
$$
F_1+F_2+F_3=0, \quad \hbox{or, equivalently,} \quad  F_1=-F_2-F_3.
$$
The intersection point of force lines $F_2$ and $F_3$ belongs to the force line $F_2+F_3$ and hence to $F_1$.
Therefore, $\ell_1$, $\ell_2$, and $\ell_3$ intersect in a common point.

Conversely, let $\ell_1$, $\ell_2$, and $\ell_3$ meet in a point $B$. Consider
$$
F_i=a_idp_i\wedge dB \quad \hbox{for $i=1,2,3$}
$$
with nonzero real numbers $a_1,a_2,a_3$ such that $F_1+F_2+F_3=0$ which is equivalent to
\begin{equation}\label{eq2}
(a_1dp_1+a_2dp_2+a_3dp_3)\wedge dB=0.
\end{equation}
Since $p_1$, $p_2$, and $p_3$ are not in a line, we have $a_1dp_1+a_2dp_2+a_3dp_3\ne 0$.
Therefore, Equation~(\ref{eq2}) implies
$$
dB=\alpha(a_1dp_1+a_2dp_2+a_3dp_3)
$$
for some nonzero $\alpha$.
Set
$$
F_{i,j}=-\alpha a_ia_j dp_i\wedge dp_j.
$$
Then at every edge we have
$$
F_{i,i+1}+F_{i+1,i}=-\alpha a_ia_{i+1} dp_i\wedge dp_{i+1}-\alpha a_ia_{i+1} dp_{i+1}\wedge dp_i=0,
$$
and at every vertex we have
$$
\begin{aligned}
F_i+F_{i,i-1}+F_{i,i+1}&=
a_idp_i\wedge ({-}\alpha a_{i-1}dp_{i-1}{-}\alpha a_{i+1}dp_{i+1}+dB)
\\
&=\alpha a_idp_i\wedge \big({-}a_{i-1}dp_{i-1}{-}a_{i+1}dp_{i+1}+(a_{i-1}dp_{i-1}{+}a_{i}dp_{i}{+}a_{i+1}dp_{i+1})\big)\\
&=\alpha a_idp_i\wedge a_{i}dp_{i}
\\
&=0.
\end{aligned}
$$
Hence $F$ is a nonzero equilibrium force-load on $C(P,L)$.

\vspace{2mm}

\begin{figure}
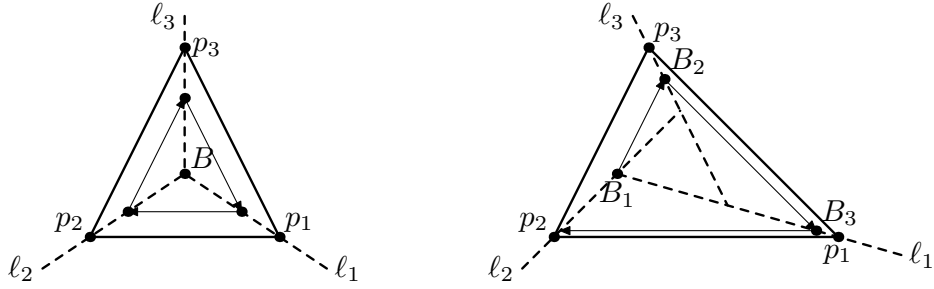

$$\epsfbox{pic.19}\qquad\qquad\epsfbox{pic.20}$$
\caption{Framed triangular cycles with trivial monodromy operators (on the left) and with
non-trivial monodromy operators (on the right).}\label{pic.19}
\end{figure}

{\noindent
$($a$){\iff}($c$)$.
Denote $B=\ell_1\cap \ell_2$.
Suppose $B \in \ell_3$, then $B$ is a fixed point for every monodromy (see Figure~\ref{pic.19}, on the left).
Therefore, all monodromies are trivial.
}

Suppose now, that $\ell_3$ does not contain $B$ (see Figure~\ref{pic.19}, on the right). Denote
$$
B_1=B, \qquad
B_2=\xi_\ell[p_2p_3;\ell_2,\ell_3](B_1), \quad \hbox{and} \quad
B_3=\xi_\ell[p_3p_1;\ell_3,\ell_1](B_3).
$$
Since $\ell_3 \ne \ell_1$, the point $B_2$ is not in $\ell_1$.
Further, since $p_1 \notin \ell$ we have
$$
p_3p_1\cap\ell\ne p_3p_2\cap\ell.
$$
Hence
$$
B_3=\xi_\ell[p_3p_1;\ell_3,\ell_1](B_2)\ne\xi_\ell[p_3p_2;\ell_3,\ell_2](B_2)=B_1.
$$
Therefore, $M_\ell(\ell_1,C(P,L))$ is not trivial.
Then by Proposition~\ref{dif-mono} all monodromies are not trivial.
\end{proof}

{
\noindent
{\it Proof of Threorem~\ref{force=monodromy}.}
Let us prove the statement of the theorem by induction in the number of vertices in the cycle.
}

\vspace{1mm}
{
\noindent
{\it Base of induction.} If the cycle $C(P,L)$ is triangular then the statement of Theorem~\ref{force=monodromy}
follows from Lemma~\ref{triangular}.
}

\vspace{1mm}
{
\noindent
{\it Step of induction.} Let the statement holds for every framed cycle in general position on $n$ vertices.
Let us prove the statement for an arbitrary framed cycle in general position on $n{+}1$ vertices.
}

Let $C(P,L)$ be a framed cycle in general position on $n{+}1$ vertices, and let $\omega_i$ be one of its projection operations.
Then from the one hand, by Proposition~\ref{projection-property}({\it ii}) the monodromy at every edge of $C(P,L)$ is trivial
if and only if the monodromy at the corresponding edge of the projection $\omega_i(C(P,L))$ is trivial.
From the other hand by Proposition~\ref{projection-property}({\it iii})
the existence of a nonzero equilibrium force-load for $C(P,L)$ is equivalent
to the existence of a nonzero equilibrium force-load for $\omega_i(C(P,L))$.

Now the statement of the theorem follows directly from the induction assumption,
since the framed cycle $\omega_i(C(P,L))$ is a framed cycle in general position on $n$ vertices
(see Proposition~\ref{projection-property}({\it i})).
\qed

\begin{corollary}\label{monodromy-invariance}
Let $C(P,L)$ be a framed cycle in general position
and let $\ell'$ and $\ell''$ be a pair of lines
that do not contain intersection points of every pair of distinct edges in the cycle.
Then
$M_{\ell'}(\ell_i,C(P,L))$ is trivial if and only if $M_{\ell''}(\ell_i,C(P,L))$ is trivial.
\end{corollary}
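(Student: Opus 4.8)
The plan is to deduce Corollary~\ref{monodromy-invariance} directly from Theorem~\ref{force=monodromy} by observing that the right-hand side of the equivalence in that theorem — the existence of a nonzero equilibrium force-load for $C(P,L)$ — makes no reference whatsoever to the auxiliary line $\ell$. So the strategy is: express ``$M_{\ell'}(\ell_i,C(P,L))$ is trivial'' and ``$M_{\ell''}(\ell_i,C(P,L))$ is trivial'' each as a statement equivalent to one and the same $\ell$-independent condition, and conclude they are equivalent to each other.

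First I would check that both $\ell'$ and $\ell''$ satisfy the hypothesis required to apply Theorem~\ref{force=monodromy}: by assumption neither $\ell'$ nor $\ell''$ contains an intersection point of any pair of distinct edges of $C(P,L)$, which is precisely the condition imposed on $\ell$ in the statement of the theorem. (In particular each of $\ell'$, $\ell''$ avoids all vertices of the cycle, since a vertex is the intersection point of its two incident edges.) Then, applying Theorem~\ref{force=monodromy} with the choice $\ell = \ell'$ and the same index $i$, we get that $M_{\ell'}(\ell_i,C(P,L))$ is trivial if and only if $C(P,L)$ admits a nonzero equilibrium force-load. Applying the theorem again with $\ell = \ell''$ and the same $i$, we get that $M_{\ell''}(\ell_i,C(P,L))$ is trivial if and only if $C(P,L)$ admits a nonzero equilibrium force-load. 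Chaining the two equivalences through the common middle statement yields that $M_{\ell'}(\ell_i,C(P,L))$ is trivial if and only if $M_{\ell''}(\ell_i,C(P,L))$ is trivial, which is the assertion.

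Honestly, there is no real obstacle here: the corollary is a formal consequence, and the only thing to be careful about is the bookkeeping of hypotheses — namely that the genericity condition on $\ell'$ and $\ell''$ stated in the corollary is exactly strong enough to license two separate invocations of Theorem~\ref{force=monodromy}, and that the index $i$ can be kept fixed across both invocations (which it can, since the theorem holds for every $1\le i\le k$). One could also note, as an aside, that combined with Proposition~\ref{dif-mono} this shows the whole ``monodromy is trivial'' property is independent of both $\ell$ and $i$, justifying Definition~\ref{mcc}; but that is not needed for the corollary itself.

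\begin{proof}
Both lines $\ell'$ and $\ell''$ satisfy the hypothesis of Theorem~\ref{force=monodromy}: by assumption neither contains an intersection point of a pair of distinct edges of $C(P,L)$ (in particular neither passes through a vertex, as a vertex is the common point of its two incident edges). Fix the index $i$. Applying Theorem~\ref{force=monodromy} with $\ell=\ell'$, we see that $M_{\ell'}(\ell_i,C(P,L))$ is trivial if and only if $C(P,L)$ admits a nonzero equilibrium force-load. Applying Theorem~\ref{force=monodromy} with $\ell=\ell''$, we see that $M_{\ell''}(\ell_i,C(P,L))$ is trivial if and only if $C(P,L)$ admits a nonzero equilibrium force-load. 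Since the right-hand condition is the same in both equivalences and does not involve the auxiliary line, we conclude that $M_{\ell'}(\ell_i,C(P,L))$ is trivial if and only if $M_{\ell''}(\ell_i,C(P,L))$ is trivial.
\end{proof}
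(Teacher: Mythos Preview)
Your proof is correct and follows exactly the same approach as the paper: apply Theorem~\ref{force=monodromy} once with $\ell=\ell'$ and once with $\ell=\ell''$, and chain through the $\ell$-independent condition of admitting a nonzero equilibrium force-load. Your write-up is in fact slightly more careful than the paper's, since you explicitly verify that the hypothesis on $\ell'$ and $\ell''$ matches what Theorem~\ref{force=monodromy} requires.
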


\begin{proof}
Suppose that $M_{\ell'}(\ell_i,C(P,L))$ is trivial.
Hence by Theorem~\ref{force=monodromy} there exists an equilibrium force-load on the framed cycle $C(P,L)$.
Therefore, by Theorem~\ref{force=monodromy} the monodromy $M_{\ell''}(\ell_i,C(P,L))$ is trivial.
The converse is similar.
\end{proof}


\section{Resolution schemes at vertices}

In this section we describe a technique of resolution schemes at vertices.
It will be further used to generate framed cycles of quantizations in the next section.
In Subsection~4.1 we introduce resolution schemes and define force-loads at them.
Further in Subsection~4.2 we formulate the notions of weakly and strongly generic resolution schemes.
We define $\hf$-surgeries for resolutions schemes and the corresponding equivalent relation in Subsection~4.3.
Finally, in Subsection~4.4 we prove finiteness of equivalent strongly generic resolution schemes
(and also provide their actual number).

\subsection{Resolution schemes and force-loads on them}

We say that an edge of a tree is a {\it leaf} if one of its vertices is of degree 1.
All other edges are said to be {\it interior}.

Denote the set of all lines in the projective plane by $\gr$.
Note that the set $\gr$ is naturally isomorphic to the Grassmannian of 2-dimensional planes in $\r^3$,
i.e., to $\hbox{\rm Gr}(2,3)$.

\begin{definition}
Consider an unrooted full binary tree $T$ (i.e., the degree of every vertex of $T$ is either 1 or 3).
Let
$$
\L: E(T) \to \gr.
$$
We say that a pair $(T,\L)$ is a {\it resolution scheme}
at point $p\in \r P^2$ if for every edge $e\in T$ we have $p\in\L(e)$.
Denote it by $(T,\L)_p$.
\end{definition}

\begin{definition}
Let $(T,\L)_p$ be a resolution scheme. Assume that $T$ has vertices $v_1,\ldots, v_k$.

\begin{itemize}
\item{Consider an edge $v_iv_j$ of $T$. A {\it stress} on $v_iv_j$ of the resolution scheme $(T,\L)_p$
is a pair of forces  $(F_{i,j},F_{j,i})$ satisfying:

--- the line of force $F_{i,j}$ coincides with the line $\L (v_iv_j)$;

--- $F_{i,j}+F_{j,i}=0$.
}

\item{A {\it force-load} $F$ on the resolution scheme $(T,\L)_p$ is an assignment of stresses at every edge of $T$.
In addition we say that  $F_{i,j}=0$ if $v_iv_j$ is not an edge of $T$.
}

\item{A force-load $F$ is said to be an {\it equilibrium} force-load if
at every vertex $p_i$ of degree~3
the following equilibrium condition is fulfilled:
$$
\sum\limits_{\{j|j\ne i\}} F_{i,j}=0.
$$
}
\end{itemize}
\end{definition}

\begin{remark}
In some sense an equilibrium force-load on a resolution scheme can be seen as
a part of an infinitesimally perturbed tensegrity at point $p$.
\end{remark}

\subsection{Resolution schemes in general position}

In this subsection we study two types resolution schemes in general position.

\subsubsection{Weakly generic resolution schemes}
Let us start with weakly generic resolution schemes.

\begin{definition}
We say that a resolution scheme $(T,\L)_p$ is {\it weakly generic} if
for every pair of adjacent edges $(v_iv_j,v_jv_s)$ of $T$ we have $\L(v_iv_j)\ne \L(v_jv_s)$.
\end{definition}

\begin{proposition}\label{weakly-generic}
Let $(T,\L)_p$ be a weakly generic resolution scheme, then

$($i$)$ $(T,\L)_p$ has a nonzero equilibrium force-load;

$($ii$)$ if an equilibrium force-load is nonzero at some edge, then it is nonzero at every edge;

$($iii$)$ every two equilibrium force-loads on $(T,\L)_p$ are proportional.
\end{proposition}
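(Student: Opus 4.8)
The plan is to induct on the number of vertices of $T$, peeling off a leaf edge at a time. The base case is a tree with a single edge $v_1v_2$ (both vertices of degree~1): a force-load assigns $(F_{1,2},F_{2,1})$ with $F_{1,2}+F_{2,1}=0$ and the line of force equal to $\L(v_1v_2)$; there is no degree-3 vertex, so every such force-load is an equilibrium force-load, the nonzero ones obviously exist, and any two are proportional since a force with prescribed line of force through $p$ is determined up to scalar (it is $a\,dp\wedge dq$ for a fixed $q\in\L(v_1v_2)\setminus\{p\}$). The next genuine case is the tree with one degree-3 vertex $v_0$ and three leaves $v_1,v_2,v_3$, where weak genericity says the three lines $\L(v_0v_i)$ are pairwise distinct; this is essentially the triangular-cycle computation of Lemma~\ref{triangular}, done at the single point $p$: one picks forces $F_i = a_i\, dp\wedge dq_i$ along $\L(v_0v_i)$ and solves $a_1\,dp\wedge dq_1 + a_2\,dp\wedge dq_2 + a_3\,dp\wedge dq_3 = 0$, which has a one-dimensional solution space in $(a_1,a_2,a_3)$ because $dq_1,dq_2,dq_3$ are, modulo $dp$, three distinct lines in the pencil through $p$, hence any two of them are independent mod $dp$ and the third is a combination of them with both coefficients nonzero. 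This gives a nonzero equilibrium force-load, shows it is nonzero on every edge, and shows uniqueness up to scalar.

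For the inductive step, take $T$ with $k\ge 4$ vertices and pick an interior edge adjacent to a ``cherry'': a degree-3 vertex $v$ whose two other neighbours $u_1,u_2$ are leaves. (Such a vertex exists in any full binary tree with at least two interior vertices.) Let $w$ be the third neighbour of $v$, and form $T'$ by deleting $u_1,u_2,v$ and attaching a new leaf edge at $w$ carrying the line $\L'(\text{new edge}) := \L(vw)$; all other edges keep their lines. Then $T'$ is again a full binary tree, and it is weakly generic because the only adjacency that changed is at $w$, where the incident line is unchanged. By induction $(T',\L')_p$ has a nonzero equilibrium force-load $F'$, nonzero on every edge, unique up to scalar. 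I would then reconstruct from $F'$ an equilibrium force-load on $T$: the force transmitted along the new leaf of $T'$ equals $-F'_{w,\,\ast}$ coming into $w$, i.e. a prescribed force $F_{v,w}$ along $\L(vw)$; at $v$ one must solve $F_{v,u_1}+F_{v,u_2}+F_{v,w}=0$ with $F_{v,u_i}$ along the line $\L(vu_i)$. Since $\L(vu_1),\L(vu_2),\L(vw)$ are pairwise distinct (weak genericity at $v$), exactly as in the three-leaf case the two components $F_{v,u_1}, F_{v,u_2}$ are uniquely determined, and both are nonzero because $F_{v,w}\ne 0$ is not along either $\L(vu_i)$. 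This extends $F'$ to a force-load $F$ on $T$ satisfying equilibrium at every degree-3 vertex (at $v$ by construction, at all others because $F$ restricted there agrees with $F'$). Nonzeroness on every edge of $T$ follows from nonzeroness of $F'$ on $T'$ together with $F_{v,u_1},F_{v,u_2}\ne0$; proportionality of two equilibrium force-loads on $T$ follows because each restricts to an equilibrium force-load on $T'$, hence the restrictions are proportional by induction, and the common scalar then forces proportionality of the $F_{v,u_i}$ components as well by their uniqueness.

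The main obstacle is bookkeeping rather than conceptual: one has to check carefully that the ``cut and re-glue'' operation preserves the full-binary-tree structure and weak genericity, and that the correspondence between force-loads on $T$ and on $T'$ is a linear bijection that matches equilibrium conditions bijectively (the equilibrium condition at $w$ in $T$ is the same as in $T'$ precisely because the incoming force along $\L(vw)$ in $T$ plays the role of the leaf force along the new edge in $T'$). Once that correspondence is set up cleanly, items (i), (ii), (iii) all transfer in one stroke, since they are each preserved under a linear isomorphism of the (equilibrium) force-load spaces that carries edges to edges. A small point to be careful about: weak genericity is only a condition on \emph{adjacent} edges, so after the surgery one need only re-examine adjacencies at the single vertex $w$, all of which involve an unchanged line.
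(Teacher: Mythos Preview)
Your proof is correct and follows essentially the same idea as the paper's: propagate the force through the tree using the fact that at a degree-3 vertex with pairwise distinct incident lines, the force on one edge uniquely determines (nonzero) forces on the other two. The paper phrases this as an outward propagation from a chosen starting edge, while you package it as a formal structural induction via cherry removal; these are two presentations of the same mechanism, and your version is if anything more carefully organized.
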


\begin{proof}
Let us construct an equilibrium force-load on $(T,\L)_p$ starting from an arbitrary edge $e$.
Fix an arbitrary nonzero stress at the edge $e$.
Let us consider all edges adjacent to $e$.
The equilibrium conditions at vertices of degree 3 uniquely define the stresses at all adjacent edges.
Inductively attaching adjacent vectors further we uniquely extend the collection of constructed stresses at edges to the
equilibrium force-load on the entire tree $T$.
This can be done for the entire tree $T$, since it has only vertices of degree 1 and of degree 3.
This concludes the proof of Item~({\it i}).

\vspace{1mm}

At each step of the induction discussed above we obtain nonzero stresses
at edges of $T$, hence the statement of Item ({\it ii}) holds.

\vspace{1mm}

Finally, the linear combination of two equilibrium force-loads is an equilibrium force-load.
Therefore, Items ({\it ii}) and ({\it iii}) are equivalent.
\end{proof}

\subsubsection{Strongly generic resolution schemes}

Let us give a more restrictive definition of resolution schemes in general position.

\begin{definition}
Let $(T,\L)_p$ be a weakly generic resolution scheme and let
$F$ be a nonzero equilibrium force-load at it.
Suppose that the forces of $F$ at all the leaves of $T$ are $F_1,\ldots, F_s$.
We say that $(T,\L)_p$ is {\it strongly generic} if the following two conditions hold.
\begin{itemize}
\item
Let $a_i\in \{0,1\}$ for $i=1,\ldots, s$. Then
$$
\sum\limits_{i=1}^s a_iF_i=0 \quad \hbox{implies} \quad a_1=\ldots=a_s.
$$

\item
All $2^{s-1}-1$ lines of forces defined by
$$
F_1+\sum\limits_{i=2}^s a_iF_i, \quad \hbox{where} \quad  (a_2,\ldots a_s)\in \{0,1\}^{s-1}\setminus \{ (1,\ldots,1) \}
$$
are distinct.
\end{itemize}
\end{definition}

\begin{remark}
It is clear that the lines forces
$$
F_1+\sum\limits_{i=2}^s a_iF_i \quad \hbox{and} \quad \sum\limits_{i=2}^s (1-a_i)F_i
\quad \hbox{where} \quad  (a_2,\ldots a_s)\in \{0,1\}^{s-1}\setminus \{ (1,\ldots,1) \}.
$$
are always negative to each other and hence their lines of forces coincide.
\end{remark}

\begin{example}
Suppose that the lines for the leaves of a resolution scheme with $s$-leaves are distinct.
Then if $s=3$ then the resolution scheme is strongly generic.
Further if $s=4,5$ then the resolution scheme is strongly generic if and only if
all the lines of the framing for the resolution scheme are pairwise distinct.
For $s>5$ we have more complicated conditions on strongly generic resolution schemes.
\end{example}

\subsubsection{On nonzero equilibrium force-loads at weakly generic resolution schemes}

Let us observe the following static property of lines at interior edges of
weakly generic resolutions schemes.

\begin{proposition}\label{force-structure}
Let $(T,\L)_p$ be a weakly generic resolution scheme and let $e$ be an interior edge.
Let $\{e_1,\ldots,e_r\}$ be the subset of all leaves of $T$ that are in one of the connected components for $T\setminus\{e\}$.
Consider a nonzero equilibrium force-load $F$ on $(T,\L)_p$,
and let $F_1,\ldots,F_r$ be the forces acting along the edges $e_1,\ldots,e_r$.
Then the line of force for
$$
F_1+\ldots+F_r\ne 0
$$
coincides with $\L(e)$.
\end{proposition}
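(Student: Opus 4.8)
The plan is to prove this by induction on the number of leaves $r$ in the chosen connected component of $T\setminus\{e\}$, exploiting the tree structure. First I would set up the induction: let $e = v_av_b$ with $v_a$ in the component containing the leaves $e_1,\ldots,e_r$. The base case is $r=1$, which means the component is a single leaf edge $e_1$ directly attached to $e$ at a degree-1 endpoint — but actually the smallest genuine case is when $v_a$ is a degree-3 vertex whose two other incident edges are both leaves $e_1, e_2$; here the equilibrium condition at $v_a$ gives $F_1 + F_2 + F_{\text{at }e} = 0$, so $F_{\text{at }e} = -(F_1+F_2)$, and since $F_{\text{at }e}$ is a nonzero force (by Proposition~\ref{weakly-generic}(ii), as $F$ is nonzero) with line of force $\L(e)$, we conclude $F_1+F_2 \ne 0$ and its line of force is $\L(e)$.

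For the inductive step, I would look at the degree-3 vertex $v_a$ incident to $e$. Its two other incident edges $e'$ and $e''$ split the leaf set $\{e_1,\ldots,e_r\}$ into two nonempty subsets, say the leaves beyond $e'$ and the leaves beyond $e''$, of sizes $r'$ and $r''$ with $r'+r''=r$. By the induction hypothesis applied to the interior (or leaf) edges $e'$ and $e''$, the sum of the forces over the leaves beyond $e'$ equals (up to sign) the force $F_{e'}$ acting along $e'$ at $v_a$, with line of force $\L(e')$; similarly for $e''$. The equilibrium condition at $v_a$ then reads $F_{e'} + F_{e''} + F_e = 0$ where $F_e$ is the force along $e$ at $v_a$. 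Hence $F_1 + \ldots + F_r = F_{e'} + F_{e''} = -F_e$, which is the nonzero force along $e$, so its line of force is $\L(e)$. One subtlety: if $e'$ or $e''$ is itself a leaf, the induction hypothesis is not needed for that branch — that single leaf force is just the force along that edge directly; the argument is uniform if we treat leaves as the base case.

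The only real obstacle is bookkeeping: I must make sure the force $F_e$ appearing in the equilibrium equation at $v_a$ is genuinely nonzero so that ``its line of force coincides with $\L(e)$'' is a meaningful statement. This follows from Proposition~\ref{weakly-generic}(ii): since $F$ is a nonzero equilibrium force-load on a weakly generic scheme, it is nonzero at \emph{every} edge, in particular at $e$. A second point to state carefully is that ``line of force of $F_{i,j}$ coincides with $\L(v_iv_j)$'' is built into the definition of a force-load on a resolution scheme, so once we know $F_1+\ldots+F_r = \pm F_e \ne 0$, the conclusion about its line of force is immediate. I expect no genuine difficulty here beyond organizing the induction over the tree cleanly.
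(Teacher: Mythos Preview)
Your proof is correct and is essentially a detailed unpacking of the paper's one-line argument (``direct corollary of the law of force addition''). One small point: to make the induction go through cleanly you should explicitly strengthen the inductive statement to the equality $F_1+\cdots+F_r = -F_e$ (not merely that the lines of force agree), since that exact identity is what you invoke at the inductive step and what your base-case computation actually establishes; alternatively, summing the equilibrium equations over all degree-$3$ vertices in the chosen component gives this identity in one stroke without induction.
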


\begin{proof}
The proposition is a direct corollary of the law of force addition.
\end{proof}

In particular, Proposition~\ref{force-structure} implies the following result.

\begin{corollary}\label{cor-force}
A nonzero equilibrium force-load on leaves and the type of a tree $T$ uniquely define the resolution scheme.
\qed
\end{corollary}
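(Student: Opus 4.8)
The plan is to reconstruct the labelling $\L$ edge by edge directly from the leaf data, using Proposition~\ref{force-structure}. So fix the combinatorial tree $T$ together with a nonzero equilibrium force-load $F$ on some resolution scheme $(T,\L)_p$, and let $F_1,\ldots,F_s$ be its values at the leaves. Since $(T,\L)_p$ is weakly generic, Proposition~\ref{weakly-generic}({\it ii}) shows $F_i\ne 0$ for every $i$, so each $F_i$ has a well-defined line of force.

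First I would treat the leaf edges: by the definition of a force-load on a resolution scheme, the line of force of $F_i$ is exactly $\L(e_i)$ for the leaf edge $e_i$, so $\L$ is already pinned down on all leaves by $(F_1,\ldots,F_s)$ alone. Next, for an interior edge $e$, the combinatorial type of $T$ singles out the set of leaves $e_{j_1},\ldots,e_{j_r}$ lying in one fixed connected component of $T\setminus\{e\}$; by Proposition~\ref{force-structure} the force $F_{j_1}+\cdots+F_{j_r}$ is nonzero and its line of force is $\L(e)$. Hence every value $\L(e)$ is a function of $(T,F_1,\ldots,F_s)$, so $\L$ is uniquely determined. The point $p$ is then recovered as the intersection of two distinct leaf lines: a full binary tree with at least three leaves either is the star $K_{1,3}$, whose three leaf edges are pairwise adjacent and hence carry pairwise distinct lines, or contains a pair of leaves incident to a common degree-$3$ vertex, whose two lines differ by weak genericity. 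Thus $(T,\L)_p$ is completely determined, and so is the force-load itself, since by Proposition~\ref{weakly-generic}({\it iii}) any two equilibrium force-loads on $(T,\L)_p$ are proportional and the prescribed nonzero leaf values fix the scalar.

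This is a short argument once Proposition~\ref{force-structure} is available; the only points needing a line of care are the verifications that all the forces whose lines of force we invoke are genuinely nonzero, that the combinatorial type of $T$ really does determine for each interior edge the partition of the leaves into the two sides of that edge, and the small bookkeeping used to recover $p$ and to exclude the degenerate tree with fewer than three leaves. I do not expect any substantial obstacle beyond this.
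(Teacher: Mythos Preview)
Your argument is correct and is exactly the approach the paper intends: in the paper the corollary is stated immediately after Proposition~\ref{force-structure} with only a \qed, so the intended proof is precisely the observation that each $\L(e)$ is the line of force of the appropriate partial sum of leaf forces, which you have spelled out. Your additional care in recovering $p$ and the full force-load is more detail than the paper bothers with, but it is sound and does no harm.
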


\subsection{Equivalent resolution schemes}

Our next goal is to introduce equivalence relation on strongly generic resolution schemes.

First of all we define $\hf$-surgeries for strongly generic resolution schemes.
In the next definition we consider graphs
$G_H$ with vertices $v_1,\ldots, v_6$, and $G_\Phi$ with vertices
$v_1', v_2',v_3,v_4,v_5, v_6$ as in Definition~\ref{HPhi} (see also Figure~\ref{pic.4}).

\begin{definition}\label{surII-rs}
Let $(T,\L)_p$ be a strongly generic resolution scheme.
Consider $G_H\subset T$ and $G_\Phi$ such that $v_1'$ and $v_2'$ are not vertices of $T$.
Consider a nonzero force-load $F$ on $(T,\L)_p$.
Set $\ell$ as a line of force for $F_{3,1}+F_{2,5}$.

We say that operation of replacing the subgraph $G_H$ with $G_\Phi$ and changing $\L$ to $\L'$ defined as
$$
\begin{array}{ll}
\L'(v_1'v_2')=\ell; & \quad \L'(v_1'v_3)= \L(v_1,v_3); \\
\L'(v_1'v_5)= \L(v_2,v_5); & \quad \L'(v_2'v_4)= \L(v_1,v_4); \\
\L'(v_2'v_6)= \L(v_2,v_6); & \quad \L'(e)=\L(e)\hbox{ for any other edge $e$}
\end{array}
$$
is an {\it $\hf$-surgery} on $(T,\L)_p$ at the interior edge $v_1v_2$.
(See Figure~\ref{pic.4}.)
\end{definition}

\begin{figure}
$$\epsfbox{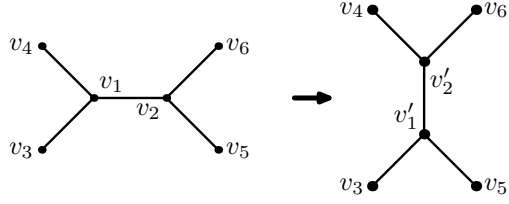}$$
\caption{$\hf$-surgery at an interior edge $v_1v_2$ of a resolutions scheme.}\label{pic.4}
\end{figure}

Later is Subsubsection~\ref{Geom-framed} we show how to define the surgery using Operations~I--IV.

\begin{remark}
Notice that $\hf$-surgery is not defined if $F_{3,1}+F_{2,5}=0$.
\end{remark}

For strongly generic resolution schemes we have the following proposition.

\begin{proposition}\label{HF-and-forceloads}
Let $(T,\L)_p$ be a strongly generic resolution scheme.
Then for every $G_H\subset T$ the following three statements hold.

{
\noindent
{\it $($i$)$} The corresponding $\hf$-surgery is well-defined.
}

{
\noindent
{\it $($ii$)$} The $\hf$-surgery does not change equilibrium force-loads
for all the edges of $(T,\L)_p$ except for the edge $v_1v_2$ where the surgery takes place.
In particular all equilibrium force-loads at leaves are preserved by every $\hf$-surgery.
}

{
\noindent
{\it $($iii$)$} The resulting resolution scheme is strongly generic.
}
\end{proposition}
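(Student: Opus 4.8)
The plan is to deduce all three parts from the strong genericity of $(T,\L)_p$ together with two earlier results: Proposition~\ref{HQ-proposition}, which governs how an equilibrium force-load behaves under the local rewiring $G_H\rightsquigarrow G_\Phi$, and Proposition~\ref{force-structure}, which identifies the line at any interior edge with the line of force of the sum of the leaf forces lying on one side of it. Fix a nonzero equilibrium force-load $F$ on $(T,\L)_p$ with leaf forces $F_1,\dots,F_s$; it is unique up to a scalar by Proposition~\ref{weakly-generic}, so the lines of force considered below do not depend on this choice, and summing the equilibrium conditions over all degree-$3$ vertices gives $\sum_{i=1}^{s}F_i=0$. Let $T_1\ni v_1$ and $T_2\ni v_2$ be the two components of $T\setminus\{v_1v_2\}$, and let $A_3,A_4$ (resp.\ $A_5,A_6$) be the sets of leaves of $T$ lying in the branch at $v_1$ through $v_3,v_4$ (resp.\ at $v_2$ through $v_5,v_6$). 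These four sets are nonempty, pairwise disjoint, and cover all leaves; note also $s\ge4$, since an interior edge forces at least two degree-$3$ vertices.

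For part $(i)$ I would argue that the force $F_{3,1}+F_{2,5}$ whose line of force defines $\ell$ is, up to sign, $\sum_{i\in A}F_i$ with $A:=A_3\cup A_5$: by Proposition~\ref{force-structure} the line $\L(v_1v_3)$ (resp.\ $\L(v_2v_5)$) is the line of force of $\sum_{i\in A_3}F_i$ (resp.\ $\sum_{i\in A_5}F_i$), and evaluating the equilibrium conditions along the edges $v_1v_3$ and $v_2v_5$ expresses $F_{3,1}+F_{2,5}$ as $\pm\sum_{i\in A}F_i$. Since $A$ is nonempty (it contains $A_3$) and proper (it misses $A_4$), the first condition in the definition of a strongly generic resolution scheme forces $\sum_{i\in A}F_i\ne 0$, so $\ell$ is a genuine line; and it contains $p$ because it is the line of force of a combination of two forces each of whose lines of force passes through $p$. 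Hence the surgery is well-defined. For part $(ii)$: the rewiring leaves untouched every vertex outside $\{v_1,v_2,v_1',v_2'\}$ and every edge outside $G_H$, and it carries the four boundary edges of $G_H$ to those of $G_\Phi$ with unchanged $\L$-lines. Restricting $F$ to $G_H$ produces exactly the configuration of forces to which Proposition~\ref{HQ-proposition} applies — its proof is purely algebraic about sums of forces and uses nothing about the points, so it remains valid here, where all lines pass through $p$ — and this yields an equilibrium of $G_\Phi$ with the same four boundary forces and a new interior force of line $\ell$, which glues back consistently with the rest of the tree. The result is an equilibrium force-load $F'$ on $(T',\L')_p$ equal to $F$ on every edge except $v_1v_2$, now $v_1'v_2'$; running the construction backwards shows the correspondence is a bijection, and since every leaf edge is preserved with its line, the leaf forces are unchanged.

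For part $(iii)$ I would note first that the surgery preserves the degree of every vertex and keeps the graph a tree, so $T'$ is again an unrooted full binary tree. Weak genericity for $(T',\L')$ then only needs to be checked for pairs of adjacent edges both among the five edges of $G_\Phi$, all other adjacencies being inherited from $T$; by Proposition~\ref{force-structure} their $\L'$-lines are the lines of force of $\sum_{A_3}F_i$, $\sum_{A_5}F_i$, $\sum_{A_4}F_i$, $\sum_{A_6}F_i$ and $\sum_{A_3\cup A_5}F_i$, whose index sets are proper, nonempty, and pairwise neither equal nor complementary — an immediate case check from the disjointness and nonemptiness of $A_3,A_4,A_5,A_6$ together with $\sum_{i=1}^s F_i=0$ — so the second strong-genericity condition makes all five lines distinct. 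Finally, the two conditions defining strong genericity refer only to the leaf forces, and by $(ii)$ the leaf set of $T'$ and the leaf forces of $F'$ coincide with those of $T$ and $F$; hence both conditions carry over and $(T',\L')_p$ is strongly generic.

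The main obstacle is bookkeeping rather than ideas: one must fix orientation conventions for the forces at edges carefully enough to be sure that $F_{3,1}+F_{2,5}$ and the analogous boundary combinations really are, up to sign, sums of leaf forces over the expected index sets, and one must confirm that the hypotheses of Proposition~\ref{HQ-proposition} are met in the degenerate setting where all force lines pass through the single point $p$ — which they are, since that proposition never uses that $q_1$ and $q_2$ are distinct from each other or from anything else.
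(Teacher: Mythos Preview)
Your proof is correct and follows essentially the same approach as the paper: both arguments use Proposition~\ref{force-structure} to translate edge lines into lines of partial sums of leaf forces, and then appeal to the two conditions in the definition of strong genericity. Your write-up is considerably more explicit than the paper's---for $(i)$ you invoke the first strong-genericity condition (nonvanishing of proper partial sums) where the paper appeals to the second (distinctness of lines, which forces $\L(v_1v_3)\ne\L(v_2v_5)$ and hence $F_{3,1}+F_{2,5}\ne0$); for $(ii)$ you spell out the algebraic content of Proposition~\ref{HQ-proposition} where the paper simply says ``follows directly from the definition''; and for $(iii)$ your case-check of the five $G_\Phi$ lines unpacks what the paper abbreviates as ``otherwise some of the sums at leaves are proportional''---but the underlying ideas are the same.
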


\begin{proof}
{\it Item $($i$)$.}
By Proposition~\ref{force-structure},
since $(T,\L)_p$ is strongly generic,
the lines at edges of $(G_H,\L)_p$ are defined by forces with distinct lines of forces.
Hence $(G_H,\L)_p$ is strongly generic, in particular, the lines for edges of the framework for $(G_H,\L)_p$ do not coincide with each other.
This implies that $F_{3,1}+F_{2,5}\ne 0$ and thus $\ell'_{1,2}$ is uniquely defined.
Therefore, $\hf$-surgery is well-defined.

\vspace{1mm}

{
\noindent
{\it Item $($ii$)$} follows directly from the definition of $\hf$-surgery.
}

\vspace{1mm}

{
\noindent
{\it Item $($iii$)$.} The $\hf$-surgery does not change the forces on the leaves.
Hence the resulting resolution scheme is weakly generic, otherwise some of the sums at leaves are proportional.
Since the forces at leaves before and after the $\hf$-surgery coincide, two conditions of strong genericity are fulfilled.
Together with weak genericity this implies strong genericity of the resulting resolution scheme.
}
\end{proof}

\begin{definition}\label{eqqquiv}
We say that two strongly generic resolution schemes are {\it equivalent}
if
there exists a sequence of $\hf$-surgeries taking one of them to another.
\end{definition}

\subsection{Finiteness of equivalent resolution schemes}

Finiteness of equivalent resolution schemes follows directly from the following statement.

\begin{proposition}
Let $(T,\L)_p$ be a strongly generic resolution scheme $($where $T$ has $n$ leaves$)$.
Then the set of resolution schemes equivalent to $(T,\L)_p$ are in one to-one-correspondence with
the set of all unrooted binary full trees with $n$ marked leaves.
\end{proposition}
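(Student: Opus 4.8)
The plan is to establish the correspondence in both directions. First I would produce, for each unrooted full binary tree $T'$ on $n$ marked leaves, a resolution scheme $(T',\L')_p$ equivalent to $(T,\L)_p$. The key observation is that $\hf$-surgeries act on the underlying tree exactly like the classical rotation move on binary trees: replacing $G_H\subset T$ by $G_\Phi$ swaps which pair of the four pendant edges $v_3,v_4,v_5,v_6$ is grouped together across the interior edge, while keeping everything else fixed. Since any two unrooted full binary trees with the same number of labelled leaves are connected by a finite sequence of such rotations (a standard fact about the rotation graph / tree-edit metric on binary trees), there is a sequence of $\hf$-surgeries taking the tree of $(T,\L)_p$ to $T'$. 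By Proposition~\ref{HF-and-forceloads}(iii) every intermediate scheme is strongly generic, so each surgery in the sequence is legitimate and the endpoint is a strongly generic resolution scheme on the tree $T'$. This gives a surjection from equivalence classes of trees onto resolution schemes equivalent to $(T,\L)_p$; since the surgeries never change which tree-type we can reach except by such rotations, the map from marked trees to equivalent schemes is also well-defined and surjective onto the equivalence class.

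For injectivity, I would use Corollary~\ref{cor-force}: a nonzero equilibrium force-load on the leaves, together with the tree type, uniquely determines the resolution scheme. By Proposition~\ref{HF-and-forceloads}(ii), every $\hf$-surgery preserves the equilibrium force-load at all leaves, so all schemes equivalent to $(T,\L)_p$ carry the same (projectively unique, by Proposition~\ref{weakly-generic}(iii)) force-load $F$ on their leaves. Hence if two equivalent schemes have the same underlying marked tree $T'$, they have the same tree type and the same leaf force-load, so by Corollary~\ref{cor-force} they coincide. This shows the assignment $T'\mapsto (T',\L')_p$ is injective on marked trees, and combined with surjectivity yields the claimed bijection.

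The main obstacle I expect is the surjectivity/realizability step: one must check that for an arbitrary target tree $T'$ the scheme built by propagating the fixed leaf force-load $F$ through $T'$ (as in the proof of Proposition~\ref{weakly-generic}(i)) is actually strongly generic, and not merely weakly generic. This is where the strong-genericity hypothesis on $(T,\L)_p$ is essential: the two strong-genericity conditions are stated purely in terms of the forces $F_1,\dots,F_s$ at the leaves — linear independence-type conditions on subset sums, and distinctness of the $2^{s-1}-1$ partial-sum lines — and these forces are the same for every scheme in the equivalence class. So any scheme obtained from $F$ and any tree type automatically satisfies both conditions, hence is strongly generic. Making this precise (that the lines at interior edges, which by Proposition~\ref{force-structure} are determined by partial sums of the $F_i$, inherit the needed distinctness) is the one point requiring care; everything else is bookkeeping about rotations of binary trees.
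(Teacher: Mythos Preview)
Your proposal is correct and follows essentially the same route as the paper: injectivity via Corollary~\ref{cor-force} together with preservation of leaf forces under $\hf$-surgeries (Proposition~\ref{HF-and-forceloads}(ii)), and surjectivity via the fact that $\hf$-surgeries are always available on strongly generic schemes and preserve strong genericity (Proposition~\ref{HF-and-forceloads}(i),(iii)). You are in fact more explicit than the paper on two points it leaves tacit: that $\hf$-surgeries realize tree rotations and that rotations connect all unrooted full binary trees on $n$ labelled leaves, and that strong genericity is a condition on the leaf forces alone and hence is automatically inherited by any scheme built from the same leaf data.
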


\begin{proof}
By Proposition~\ref{HF-and-forceloads}({\it ii}) the set of restrictions of equilibrium force-loads to the leaves are the same
for equivalent resolution schemes.
By Proposition~\ref{weakly-generic}({\it i}) there exists a nonzero equilibrium force-load $F$ on $(T,\L)_p$,
hence the set of restrictions contains nonzero elements.
By Corollary~\ref{cor-force} the equilibrium force-load on leaves and the type of the tree uniquely defines the resolution scheme.
Hence for every unrooted binary full tree $T'$ with $n$ leaves there exists at most one resolution scheme equivalent to $(T,\L)_p$.
We have injectivity.

By Proposition~\ref{HF-and-forceloads}({\it i}) the $\hf$-surgery at every interior edge of a strongly generic resolution scheme is well-defined,
and by  Proposition~\ref{HF-and-forceloads}({\it iii}) the image is also a strongly generic resolution scheme.
Hence for every unrooted binary full tree $T'$ with $n$ leaves there exists a resolution scheme $(T',\L')_p$ equivalent to $T(p,L)$.
This implies surjectivity.
\end{proof}

\begin{remark}
Note that the number of rooted binary full trees
with $n$ labeled leaves is precisely $(2n-1)!!$ (e.g., see ex.~5.2.6 in~\cite{Sta}).
Here the root may be considered as a vertex with additional leave with ``empty'' color.
Hence the number of unrooted binary full trees is $(2n-1)!!$ as well.
\end{remark}

\section{Quantization of a graph}

The main goal of this section is to define quantizations of graphs
and prove a necessary and sufficient condition of existence of non-parallelizable tensegrities in terms of quantizations.
We start in Subsection~5.1 with the notion of graph quantizations.
Further in Subsection~5.2 we introduce framed cycles of frameworks related to quantizations.
We generalize the notion of tensegrity to the case of quantizations in Subsection~5.3.
In Subsection~5.4 we study force-loads for framed cycles of quantizations.
Finally, in Subsection~5.5 we formulate and prove a necessary and sufficient condition of existence of non-parallelizable tensegrities (Theorem~\ref{tensegrity=quantiza}).

\subsection{Definition of a quantization}

Let us start with the following general definition.

\begin{definition}
Consider a graph $G$ on vertices $v_1,\ldots, v_n$.
Let $\T$ be a disjoint collection of unrooted binary full trees $T_1,\ldots, T_n$ such that
the leaves of the tree $T_i$ are in one-to-one correspondence with
the edges adjacent to $p_i$ $($where $i=1,\ldots,n)$.
\begin{itemize}
\item Denote by $G_\T$ the connected graph obtained from $\T$ by gluing together pairs of leaves of trees
corresponding to the same edges of $G$. We say that $G_\T$ is a {\it resolution graph}
for $G$ with respect to $\T$.

\item The tree $T_i$ is a {\it resolution tree} for $G_\T$ at vertex $p_i$.

\item We say that an edge of $G_\T$ is a {\it leaf} if it is obtained by gluing leaves in $\T$. All
the other edges of $G_\T$ are said to be {\it interior} edges.
\end{itemize}
\end{definition}

Now we are ready to define quantizations.

\begin{definition}
Let $G$ be a graph, let $G(P)$ be a framework on $G$,
and let $\T$ be a collection of trees for all vertices of $G(P)$.
A {\it quantization} $G_\T(\L,P)$ is a pair $(G_\T,\L)$ where
$$
\L: E(G_\T) \to \gr
$$
such that the following two conditions are fulfilled.
\begin{itemize}
\item {\bf Leaf condition:}
Let $e\in G_\T$ be a leaf corresponding to the edge $p_ip_j\in G(P)$, hence
$$
\L(e)=p_ip_j.
$$

\item {\bf Interior edge condition:}
Let $e\in G_\T$ be an interior edge corresponding to the point $p_i\in G(P)$, hence
$$
p_i\in \L(e).
$$
\end{itemize}
\end{definition}

\begin{definition}
Let $G_\T(\L,P)$ be a quantization of $G$. Consider a point $p_i\in P$ and the corresponding tree $T_i\in\T$.
Here we consider $T_i$ as a tree naturally embedded to $G_\T$. Denote by $\L|_{T_i}$
the restriction of $\L$ to $T_i$.
We say that $(\L|_{T_i},T_i)_{p_i}$ is the {\it resolution scheme of the quantization $G_\T(\L,P)$ at vertex $p_i$}.
\end{definition}

Let us continue with a particular example.

\begin{example}
On Figure~\ref{pic22} we consider a framework $G(P)$, a collection of resolution schemes $\T$ and the corresponding quantization $G_\T(\L,P)$.
The values of the function $\L$ at edges of the resolution schemes and the quantization are shown on the edges.
\end{example}

\begin{figure}
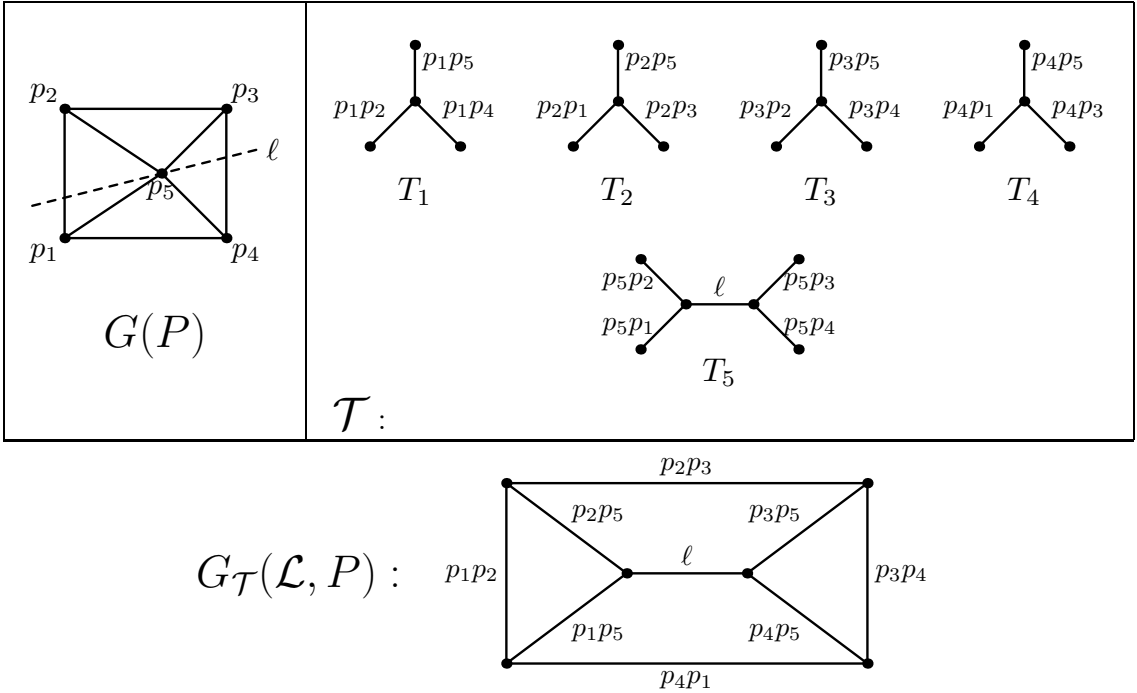

\begin{tabular}{|c|c|}
\hline
$
\begin{array}{c}
\epsfbox{pic.21}
\\
\quad
\\
{\hbox{\Large$G(P)$}}
\end{array}
$
&
$
\begin{array}{l}
\quad\\
\epsfbox{pic.22}
\\
{\hbox{\Large $\T$} }:
\\
\end{array}
$
\\
\hline
\end{tabular}
$$
{\hbox{\Large $G_\T(\L,P):$}} \quad \begin{array}{l}\epsfbox{pic.23}\end{array}
$$
\caption{A framework $G(P)$, a collection of resolution schemes $\T$ and the corresponding quantization $G_\T(\L,P)$.}
\end{figure}\label{pic22}

Further we will use the following notion of genericity for quantizations.

\begin{definition}
A quantization is called {\it generic} if all its resolutions schemes are strongly generic.
\end{definition}

\subsection{Framed cycles associated to quantizations}

First, we define the framing for two adjacent edges of frameworks.

\begin{definition}\label{Definition_fra}
Let $G(P)$ be a framework in general position and let $G_\T(\L,P)$ be its generic quantization.
Consider two edges $e_{ij}$ and $e_{ik}$ in $G(P)$ with a common vertex $p_i$, let also $(T_i,\L)_{p_i}$ be
the resolution scheme at $p_i$.
Consider a resolution scheme $(T_i',\L')_{p_i}$ equivalent to $(T_i,\L)_{p_i}$ such that
the leaves $\L'^{-1}(e_{ij})$ and $\L'^{-1}(e_{ik})$ are adjacent to the same vertex $v$ in $T'_i$,
and let $e$ be the third edge adjacent to $v$.
We say that the line $\L(e)$  is the {\it associated framing for the pair of edges $(e_{ij},e_{ik})$ at $p_i$} and
denote it by $\ell_{jik}$.
\end{definition}

The above definition leads to the natural notion of framed cycles associated to quantizations.

\begin{definition}\label{defff}
Let $G(P)$ be a framework in general position and let $G_\T(\L,P)$ be its generic quantization.
Consider a simple cycle $C=q_1\ldots q_s$ in $G(P)$ that does not contain all vertices of $P$.
Denote by $C(G,\T,P,\L)$ the framed cycle with consequent vertices $q_1\ldots q_s$ and framing
lines $\ell_{i{-}1,i,i{+}1}$ at $q_i$ (for $i=1,\ldots,s$). We say that this cycle
is a {\it framed cycle associated to quantizations}.
\end{definition}

We should mention the following property of the framings.

\begin{proposition}\label{prop_fra}
The framing $\ell_{jik}$ in Definition~\ref{Definition_fra}
does not depend on the choice of the equivalent resolution scheme $(T_i',\L')_{p_i}$ with adjacent leaves $\L'^{-1}(e_{ij})$ and $\L'^{-1}(e_{ik})$.
\end{proposition}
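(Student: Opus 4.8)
The plan is to reduce the independence of $\ell_{jik}$ from the choice of equivalent resolution scheme to the uniqueness results already established for resolution schemes and their $\hf$-surgeries. First I would recall from Proposition~\ref{weakly-generic}({\it i}) that a strongly generic resolution scheme carries a nonzero equilibrium force-load $F$, unique up to scaling by Proposition~\ref{weakly-generic}({\it iii}). Since every resolution scheme equivalent to $(T_i,\L)_{p_i}$ is obtained by a sequence of $\hf$-surgeries, Proposition~\ref{HF-and-forceloads}({\it ii}) guarantees that the restriction of the equilibrium force-load to the leaves $F_1,\ldots,F_s$ is the same for all equivalent schemes. So the forces $F_{ij}$ and $F_{ik}$ acting along the leaves $\L'^{-1}(e_{ij})$ and $\L'^{-1}(e_{ik})$ are intrinsic to the equivalence class, independent of which representative $(T_i',\L')_{p_i}$ we pick.

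Next I would analyze what happens at the vertex $v$ where the two leaves meet. If $\L'^{-1}(e_{ij})$ and $\L'^{-1}(e_{ik})$ are both adjacent to $v$, and $e$ is the third edge at $v$, then the equilibrium condition at $v$ forces $F_{ij}+F_{ik}+F_e=0$, where $F_e$ is the force along $e$ in the equilibrium force-load. Hence $F_e=-(F_{ij}+F_{ik})$, and by Proposition~\ref{force-structure} (or directly by the law of force addition) its line of force is the line of $F_{ij}+F_{ik}$. Since $(T_i,\L)_{p_i}$ is strongly generic, the first strong-genericity condition gives $F_{ij}+F_{ik}\ne 0$, so this line is well-defined; and since $F_{ij}$ and $F_{ik}$ are determined by the equivalence class alone, the line $\L(e)=\ell_{jik}$ equals the line of force of the fixed 2-form $F_{ij}+F_{ik}$, which does not depend on the representative.

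The only remaining point is the existence of at least one such equivalent scheme with $\L'^{-1}(e_{ij})$ and $\L'^{-1}(e_{ik})$ adjacent to a common vertex: this follows from the classification in the previous subsection, since for every unrooted binary full tree $T'$ on the given leaf set there is a (unique) equivalent resolution scheme realizing it, and among these trees there is certainly one in which the two prescribed leaves share a neighbor. I expect the main obstacle to be purely bookkeeping: making sure that the equilibrium force-load used to define $\ell_{jik}$ is genuinely the restriction of a \emph{global} equilibrium force-load on $(T_i',\L')_{p_i}$ and that the value $\L(e)$ in Definition~\ref{Definition_fra} refers to the function $\L$ of the \emph{original} quantization rather than a transported one — but since $\hf$-surgeries change $\L$ only at the single edge where the surgery is performed while preserving all leaf forces, and the defining relation $F_e=-(F_{ij}+F_{ik})$ is representative-independent, these are routine checks. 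Hence $\ell_{jik}$ is well-defined, which completes the proof.
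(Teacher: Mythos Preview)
Your argument is correct and takes a genuinely different route from the paper. The paper argues combinatorially: given two equivalent schemes $(T_i',\L')_{p_i}$ and $(T_i'',\L'')_{p_i}$ in which the leaves $e_{ij}$, $e_{ik}$ form a cherry with third edge $e$, it claims one can connect them by a sequence of $\hf$-surgeries that avoid $e$; since each such surgery changes $\L$ only at the edge where it is performed, the value $\L'(e)=\ell_{jik}$ is carried along unchanged. Your approach instead identifies $\ell_{jik}$ intrinsically as the line of force of the sum $F_{ij}+F_{ik}$ of the two leaf forces, which by Proposition~\ref{HF-and-forceloads}({\it ii}) is invariant under all $\hf$-surgeries and is nonzero by strong genericity; thus $\ell_{jik}$ never depended on the representative to begin with. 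Your route is shorter and more conceptual (and it bypasses the step, not fully justified in the paper, that one can always arrange the connecting surgeries to avoid $e$); the paper's route, on the other hand, stays closer to the surgery formalism and does not need to invoke the equilibrium force-load at all. You are also right that in Definition~\ref{Definition_fra} the symbol ``$\L(e)$'' should really be $\L'(e)$; this is a typographical slip in the paper and your reading resolves it in the only sensible way.
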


\begin{proof}
Let $(T_i',\L')_{p_i}$ and $(T_i'',\L'')_{p_i}$ be two equivalent resolution schemes satisfying the condition of the proposition.
Let also $e$ be an edge in $T_i'$ adjacent to $\L'^{-1}(e_{ij})$ and $\L'^{-1}(e_{ik})$.
Since $(T_i',\L')_{p_i}$ and $(T_i'',\L'')_{p_i}$ are equivalent, we can connect them by a sequence of ${\hf}$-surgeries.
Since $(T_i',\L')_{p_i}$ and $(T_i'',\L'')_{p_i}$ are strongly generic, we can choose theses surgeries avoiding the edge $e$.
Hence the line $\ell_{jik}$ is preserved by this sequence of $\hf$-surgeries (by Definition~\ref{HPhi}).
Therefore, the line $\ell_{jik}$ is uniquely defined.
\end{proof}

\begin{remark}
In order to construct a resolution scheme $(T_i',\L')_{p_i}$ starting from $(T_i,\L)_{p_i}$ we propose the following.
Let $v_1\ldots v_s$ be a simple path connecting the leaf $v_1v_2$ where $\L(v_1v_2)=p_ip_j$
and the leaf $v_{s-1}v_s$ with framing to $\L(v_{s-1}v_s)=p_ip_k$.
Then we consequently perform $s{-}3$ $\hf$-surgeries along the edges $v_2v_3$, $v_3v_4,\ldots, v_{s-2}v_{s-1}$.
As a result we have a resolution scheme $(T_i',\L')_{p_i}$ whose leaves $\L'^{-1}(e_{ij})$ and $\L'^{-1}(e_{ik})$ have a common vertex.
\end{remark}

\subsection{Tensegrities on quantizations}

Let us extend the definition of tensegrity to the case of quantizations.

\begin{definition}
Consider a quantization $G_\T(\L,P)$.

\begin{itemize}
\item{A {\it stress} on an edge $v_iv_j\in G_\T$ is an assignment of two forces $F_{i,j}$ and $F_{j,i}$
whose line of forces coincide with the line $\L(v_iv_j)$ and such that $F_{i,j}+F_{j,i}=0$.
}

\vspace{1mm}

\item{A {\it force-load} $F$ on a quantization is an assignment of stresses for all edges.
Additionally we set $F_{i,j}=0$ if $v_iv_j$ is not an edge of $G_\T$.
}

\vspace{1mm}

\item{A force-load $F$ is called an {\it equilibrium} force-load if, in addition, the
following equilibrium condition is fulfilled at every vertex
$v_i$:
$$
\sum\limits_{\{j|j\ne i\}} F_{i,j}=0.
$$
}

\item{A force-load $\hat F$  on $G(P)$ is called a force-load {\it induced} by $F$ on the quantization $G_\T(\L,P)$
if for every leaf $v_iv_j$ the stress on $v_iv_j$ for $F$ coincides with the stress on the corresponding edge of $G(P)$ for $\hat F$.

}

\end{itemize}
\end{definition}

We have the following natural property of the induced force-loads.

\begin{proposition}\label{quantiation-forceload-tensegrity}
If $F$ is an equilibrium force-load on a quantization $G_\T(\L,P)$
then the induced force-load $\hat F$ on the framework $G(P)$ is also an equilibrium force-load.
\end{proposition}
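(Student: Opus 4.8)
The plan is to verify the equilibrium condition for $\hat F$ at an arbitrary vertex $p_i$ of $G(P)$ by reducing it to the equilibrium conditions for $F$ at the vertices of the resolution tree $T_i$. First I would fix a vertex $p_i$ and recall that the edges of $G(P)$ adjacent to $p_i$ correspond precisely to the leaves of the tree $T_i$ inside $G_\T$. Under the definition of the induced force-load, the force $\hat F$ exerts along the edge $p_ip_j$ coincides with the force $F$ exerts along the corresponding leaf of $T_i$ (oriented away from $p_i$); so the equilibrium sum for $\hat F$ at $p_i$ is exactly the sum of the leaf-forces of $F$ on $T_i$, all taken with the orientation pointing into the interior of $T_i$.

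Next I would show that this sum of leaf-forces vanishes. The key observation is that $F$ restricted to $T_i$ is an equilibrium force-load on the tree: at every interior (degree-$3$) vertex of $T_i$ the equilibrium condition for $F$ holds by hypothesis, and every such interior vertex maps to $p_i$ in $G_\T$. Summing the equilibrium relations $\sum_{j} F_{v,j}=0$ over all interior vertices $v$ of $T_i$, each interior edge of $T_i$ contributes its force once with each of the two opposite orientations (because its two endpoints are both interior vertices of $T_i$), so all interior-edge contributions cancel by $F_{a,b}+F_{b,a}=0$; what survives is exactly the sum of the leaf-forces, oriented into the tree. Hence that sum is zero, which is the equilibrium condition for $\hat F$ at $p_i$.

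Finally I would note that the stress condition for $\hat F$ on each edge $p_ip_j$ — namely that the two forces are opposite and have line of force $p_ip_j$ — is immediate: by the leaf condition $\L(e)=p_ip_j$ for the leaf $e$ over $p_ip_j$, and $F$ already satisfies $F_{i,j}+F_{j,i}=0$ along that line, so $\hat F$ inherits both properties. Since $p_i$ was arbitrary, $\hat F$ is an equilibrium force-load on $G(P)$.

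The only genuinely delicate point is the telescoping/cancellation bookkeeping in the middle paragraph — one must be careful that an interior edge of $T_i$ really does join two interior vertices of $T_i$ (true, since leaves of $T_i$ are degree-$1$ vertices and interior edges are by definition those not incident to a leaf), and that the orientation conventions $F_{a,b}=-F_{b,a}$ are applied consistently so that interior contributions cancel while leaf contributions add up to the quantity appearing in the equilibrium condition for $\hat F$. Everything else is a direct unwinding of the definitions of quantization, induced force-load, and equilibrium force-load.
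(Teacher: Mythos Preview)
Your proposal is correct and follows essentially the same approach as the paper: reduce the equilibrium at $p_i$ to the vanishing of the sum of leaf-forces on $T_i$. The paper simply asserts this last fact (``the sum of all forces on the leaves of any equilibrium force-load on a tree is always zero''), whereas you spell out the telescoping cancellation over interior vertices; your added verification of the stress condition via the leaf condition is also fine and implicit in the paper.
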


\begin{proof}
It is enough to check that at every vertex $p_i$ the forces of the framework sum up to zero.
This follows from the fact that the sum of all forces on the leaves of any equilibrium force-load on a tree is always zero.
\end{proof}

Further we define quantizations associated to non-parallelizable equilibrium force-loads on frameworks.

\begin{definition}\label{force-load-guant}
Let  $F$ be a  non-parallelizable equilibrium force-load $F$ on a framework $G(P)$
and let $G_\T$ be a resolution graph for $G$.
Consider
$$
\L_F: E(G_\T) \to \gr
$$
defined as follows.
Consider an edge $e\in G_\T$. Then there exists $i$ such that $e\in T_i$ for $\T_i \in \T$.
Let $F_{i_1,j_1},\ldots, F_{i_s,j_s}$ be the forces at all leaves of one of the connected components of $T_i\setminus e$
(we assume that the second indices for the forces correspond to 1-valent vertices).
Set $\L(e)$ as the line of force for
$$
F_{i_1,j_1}+\ldots+F_{i_s,j_s}.
$$
The quantization $G_\T(\L_F,P)$ is called the {\it quantization associated to  the pair} $(G(P),F)$.
\end{definition}

We have the following simple statements for the associated quantizations.

\begin{proposition}\label{quantiation-uniqueness}
$($i$)$ The quantization $G_\T(\L_F,P)$ is uniquely defined for every non-parallelizable equilibrium force-load $F$ on $G(P)$.
\\
$($ii$)$ The proportional  non-parallelizable equilibrium force-loads on $G(P)$ define the same quantization for a given $\T$.
\qed
\end{proposition}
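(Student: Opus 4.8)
The plan is to observe that both statements follow almost immediately from the explicit construction in Definition~\ref{force-load-guant}, together with the non-parallelizability hypothesis which is exactly what makes the construction well-defined. For part~(i), fix a non-parallelizable equilibrium force-load $F$ on $G(P)$ and a resolution graph $G_\T$. The map $\L_F$ is specified edge by edge, so the only thing to check is that the value $\L_F(e)$ is genuinely well-defined for each $e$. First I would note that an edge $e$ of $G_\T$ lies in exactly one tree $T_i$ (leaves belong to two trees, but a leaf $e$ corresponding to $p_ip_j$ gets the forced value $p_ip_j$ from the leaf condition, and one checks the two trees assign the same line since the force on the single leaf of $T_i$ over $e$ is $F_{i,j}$, whose line of force is $p_ip_j$, and symmetrically for $T_j$). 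For an interior edge $e\in T_i$, removing $e$ splits $T_i$ into two components; I must check that summing the leaf-forces of one component gives the same line of force (up to sign, which does not affect the line) as summing the leaf-forces of the other. This is immediate because the \emph{total} sum of all forces at the leaves of $T_i$ equals the force $-\hat F$-contribution at $p_i$, which is zero by the equilibrium condition for $\hat F$ (equivalently, by the remark that all forces at leaves of an equilibrium tree sum to zero); hence the two partial sums are negatives of each other and define the same projective line.

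The genuinely essential point — and the place where the hypothesis is used — is that these partial sums are \emph{nonzero}, so that ``the line of force of $F_{i_1,j_1}+\ldots+F_{i_s,j_s}$'' actually makes sense. This is precisely the first condition of non-parallelizability at $p_i$ (Definition~\ref{non-par}): if a sub-sum of the leaf-forces $F_1,\dots,F_s$ with $0/1$ coefficients vanished non-trivially, then some proper non-empty subset would sum to zero, contradicting that $\sum a_i F_i = 0$ forces $a_1=\dots=a_s$. So for every interior edge the relevant sum is a nonzero decomposable $2$-form and its line of force is well-defined. Assembling these values gives a single function $\L_F\colon E(G_\T)\to\gr$ satisfying the leaf and interior-edge conditions, i.e. $G_\T(\L_F,P)$ is a quantization, and by construction it is the only one compatible with $F$.

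For part~(ii), suppose $F$ and $F'=\lambda F$ with $\lambda\ne 0$ are proportional non-parallelizable equilibrium force-loads on $G(P)$, and fix the same $\T$. Then at every leaf the forces scale by $\lambda$, and hence every partial sum $F_{i_1,j_1}+\ldots+F_{i_s,j_s}$ used in Definition~\ref{force-load-guant} also scales by $\lambda$. Since a nonzero decomposable $2$-form and its nonzero scalar multiple have the same line of force, $\L_{F'}(e)=\L_F(e)$ for every edge $e$, so $G_\T(\L_{F'},P)=G_\T(\L_F,P)$. I expect no real obstacle here; the only care needed is the bookkeeping in part~(i) showing the two sides of a removed interior edge (and the two trees sharing a leaf) give consistent values, but as noted this is forced by the equilibrium condition and the leaf condition, so the argument is short.
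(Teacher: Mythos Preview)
Your proposal is correct. The paper itself gives no proof at all (the statement ends with \qed\ immediately), treating both parts as immediate from Definition~\ref{force-load-guant}; your write-up is therefore more detailed than the paper's own treatment, but follows exactly the intended reasoning---namely that non-parallelizability (first bullet of Definition~\ref{non-par}) guarantees each partial leaf-sum is nonzero so its line of force exists, the equilibrium condition at $p_i$ makes the two complementary partial sums negatives of each other so the choice of component is irrelevant, and scaling $F$ by $\lambda\ne 0$ scales every such sum without changing its line.
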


\subsection{Framed cycles of quantizations and force-loads on them}
We start this subsection with several notions and definitions related to framed cycles in quantizations.
Further we prove one important static property of them.

\subsubsection{Basic definition}
Let us define cycle resolutions, their framings, and corresponding force-loads for the framed cycles.

\begin{definition}
Consider a graph $G$ with a resolution graph $G_\T$.
Let $C$ be a simple cycle in $G$. We say that a simple cycle in $G_\T$
containing all the leaves corresponding to the edges of $G$ is
the {\it cycle resolution} of $C$. We denote it by $C_\T$.
\end{definition}

\begin{definition}
Consider a cycle resolution $C_\T$ in a generic quantization $G_\T(\L,P)$.
A {\it framed cycle} $C_\T(G,\T,P,\L)$ is a cycle $C_\T$ whose each vertex, say $v_i$,
is equipped with the line $\L(v_iv_j)$, where $v_iv_j$ is the only edge in $E(G_\T)\setminus E(C_\T)$
adjacent to $v_i$.
\end{definition}

\begin{definition}\label{almost--eq}
Let $C_\T(G,\T,P,\L)$ be a framed cycle of a generic quantization $G_\T(\L,P)$.
\begin{itemize}
\item{A {\it force-load} $F$ on a framed cycle $C_\T(G,\T,P,\L)$ is an assignment of

--- stresses $F_{i,i+1}=-F_{i+1,i}$ for every its edge $v_iv_{i+1}$, whose lines of forces coincide
with $\L(v_iv_{i+1})$;

--- framing forces $F_i$ (for $1\le i \le k$), whose lines of forces coincide with the lines of the framing.
}
\vspace{2mm}

\item{A force-load $F$ is said to be an {\it equilibrium} force-load on $C_\T(G,\T,P,\L)$ if at every vertex $v_i$  we have:
$$
F_{i,i-1}+F_{i,i+1}+F_i=0.
$$
}

\item{A force-load $F$ is called an {\it almost equilibrium} force-load on $C_\T(G,\T,P,\L)$ if the
equilibrium condition is fulfilled at every vertex of $C_\T(G,\T,P,\L)$ except one.
}
\end{itemize}
\end{definition}

\subsubsection{Almost equilibrium force-loads for consistent framed cycles}

We continue with the definitions of consistency for framed cycles of frameworks and for frameworks themselves.

\begin{definition}\label{consistent}
Let $G(P)$ be a framework in general position and let $C$ be its cycle.
A generic quantization of a framework is said to be {\it consistent} at the cycle $C$
if the framed cycle of $C(G,\T,P,\L)$ satisfies the monodromy cycle condition (see Definition~\ref{mcc}).
\end{definition}

\begin{definition}\label{consistent2}
A generic quantization of a framework $G(P)$ in general position is said to be {\it consistent}
if it is consistent at each simple cycle that does not contain all the vertices of $P$.
\end{definition}

Consistent framed cycles of framework satisfy the following property.

\begin{proposition}\label{quant--normal}
Let $G(P)$ be a framework in general position and let $G_\T(\L,P)$ be a generic quantization of $G(P)$.
Consider a framed cycle $C(G,\T,P,\L)$ of $G(P)$.
Assume that $C$ is consistent.
Then any almost equilibrium force-load at $C_\T(G,\T,P,\L)$ is an equilibrium force-load.
\end{proposition}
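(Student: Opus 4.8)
The plan is to reduce the statement for the framed cycle $C(G,\T,P,\L)$ of the framework to the corresponding statement for its cycle resolution $C_\T(G,\T,P,\L)$, and then invoke Proposition~\ref{ef-l}, whose hypothesis is exactly the existence of a \emph{nonzero} equilibrium force-load. So the first step is to observe that, by consistency at $C$ (Definition~\ref{consistent}), the framed cycle $C(G,\T,P,\L)$ satisfies the monodromy cycle condition, hence by Theorem~\ref{force=monodromy} it admits a nonzero equilibrium force-load. The main content is to transfer this to $C_\T(G,\T,P,\L)$: I would like to argue that $C_\T(G,\T,P,\L)$ is a framed cycle in general position and that it also admits a nonzero equilibrium force-load, after which Proposition~\ref{ef-l} applies verbatim to $C_\T$ and finishes the proof.

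The transfer between $C$ and $C_\T$ is the heart of the matter, and I expect it to be the main obstacle. The cycle resolution $C_\T$ is obtained from $C$ by replacing, at each vertex $q_i$ of the cycle, the two cycle-edges $e_{i-1,i}$ and $e_{i,i+1}$ with a length-3 piece of an equivalent resolution tree $(T_i',\L')_{q_i}$ in which the two corresponding leaves become adjacent to a common vertex $v$ carrying the third edge $\L(e)=\ell_{i-1,i,i+1}$; by Proposition~\ref{prop_fra} this framing is well-defined. Thus passing from $C$ to $C_\T$ is, at each vertex, precisely the reverse of the move described in the remark after Proposition~\ref{prop_fra} — a sequence of $\hf$-surgeries inside a strongly generic resolution scheme — and by Proposition~\ref{HQ-proposition} (together with Proposition~\ref{HF-and-forceloads} for the well-definedness and genericity) such $\hf$-surgeries neither destroy strong genericity nor change the dimension of the space of equilibrium force-loads, and they preserve the forces along the leaves. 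Concatenating these local moves over all vertices of the cycle shows: (a) $C_\T(G,\T,P,\L)$ is a framed cycle in general position (using that the quantization is generic and $G(P)$ is in general position, so the relevant lines are pairwise distinct and the cycle lines are in general position), and (b) a nonzero equilibrium force-load on $C(G,\T,P,\L)$ lifts to a nonzero equilibrium force-load on $C_\T(G,\T,P,\L)$ — the framing forces of the one correspond to the interior-edge forces of the length-3 tree pieces of the other, and equilibrium is preserved by Proposition~\ref{HQ-proposition}.

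Once this is in place, the argument closes quickly. Let $\hat F$ be an arbitrary almost equilibrium force-load on $C_\T(G,\T,P,\L)$, failing equilibrium at exactly one vertex. By step (b), $C_\T(G,\T,P,\L)$ admits a nonzero equilibrium force-load, so the hypothesis of Proposition~\ref{ef-l} is met; therefore every almost equilibrium force-load on $C_\T(G,\T,P,\L)$ — in particular $\hat F$ — is in fact an equilibrium force-load. This is exactly the assertion of the proposition, since the framed cycle of the statement is $C_\T(G,\T,P,\L)$.

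The step I expect to require the most care is verifying that the chain of $\hf$-surgeries used to go from $C$ to $C_\T$ can be carried out simultaneously at all cycle vertices without the surgeries at one vertex interfering with the framing lines or the strong genericity assumed at another — essentially a bookkeeping argument that the edges touched at vertex $q_i$ lie in the resolution tree $T_i$ and hence are disjoint from those touched at $q_{i'}$ for $i'\neq i$, so the local results of Section~4 and Proposition~\ref{HQ-proposition} compose cleanly. The remaining genericity checks (that all lines involved are pairwise distinct and that the cycle lines remain in general position) follow from the definition of a generic quantization and the general position of $G(P)$, together with the fact, already recorded in the proof of Proposition~\ref{projection-property}(i), that removing edges from a configuration in general position keeps it in general position.
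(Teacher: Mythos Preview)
Your overall strategy---relate $C_\T(G,\T,P,\L)$ and $C(G,\T,P,\L)$ by $\hf$-surgeries, invoke consistency together with Theorem~\ref{force=monodromy} to produce a nonzero equilibrium force-load, and finish with Proposition~\ref{ef-l}---is exactly the paper's. The gap is in where you apply Proposition~\ref{ef-l}. You claim in (a) that $C_\T(G,\T,P,\L)$ is a framed cycle \emph{in general position} and then invoke Proposition~\ref{ef-l} on $C_\T$ directly. That claim is false as soon as the cycle resolution contains any interior edge of a resolution tree. If the path through $T_i$ between the two cycle leaves has length $\ge 3$, then along $C_\T$ you have (at least) three consecutive edge-lines---the leaf $p_{i-1}p_i$, one or more interior-edge lines of $T_i$, and the leaf $p_ip_{i+1}$---all passing through the single point $p_i$. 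Their pairwise intersections coincide, so the edge-lines of $C_\T$ are \emph{not} in general position in the sense of Definition~\ref{fram-gp}, and Proposition~\ref{ef-l} as stated does not apply. Your appeal to Proposition~\ref{projection-property}(i) doesn't help here: that argument removes one line from a configuration already in general position, whereas the edge-lines of $C_\T$ fail general position to begin with.

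The paper avoids this by running the $\hf$-surgeries in the other direction: it first strips the interior edges from $C_\T$ (Proposition~\ref{HQ-proposition} guarantees that this preserves both ``equilibrium'' and ``almost equilibrium'' force-loads), lands on the framed cycle $C(G,\T,P,\L)$ of $G(P)$---which \emph{is} in general position because $G(P)$ is---and applies Proposition~\ref{ef-l} there. Your argument is easily repaired either by following this order, or by observing that the one-line proof of Proposition~\ref{ef-l} only uses that at each vertex the three incident lines are pairwise distinct (so that forces propagate uniquely around the cycle); this weaker hypothesis \emph{does} hold for $C_\T$ because the quantization is generic and hence every resolution scheme is weakly generic. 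Either fix closes the argument; as written, step (a) is the point that would not go through.
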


\begin{proof}
Suppose that a cycle $C_\T(G,\T,P,\L)$ has interior edges of resolution schemes.
Then we remove such edges by applying $\hf$-surgeries at these edges.
By Proposition~\ref{HQ-proposition} $\hf$-surgeries do not change the property of a cycle to have an (almost) equilibrium
nonzero force-load.

Now the question is reduced to the framed cycle of $C_{\tilde T}(G,\tilde T,P,\L)$
whose edges are all leaves.
In this case equilibrium (almost equilibrium) force-loads for the cycle $C_{\tilde T}(G,\tilde T,P,\L)$
coincide with equilibrium (almost equilibrium) force-loads for the cycle
$$
C(G,\tilde T,P,\L)=C(G,T,P,\L)
$$
of $G(P)$.
By Theorem~\ref{force=monodromy}, since $C(G,T,P,\L)$ is consistent (i.e., it has a trivial monodromy),
$C(G,T,P,\L)$ has a nonzero equilibrium force-load. Now the statement of this proposition
follows directly from Proposition~\ref{ef-l}.
\end{proof}

\subsection{A necessary and sufficient condition for a tensegrity with nonzero stresses}

The following statement is the main ingredient for the proof of Theorem~\ref{tensegrity=condi}.

\begin{theorem}\label{tensegrity=quantiza}
A framework in a general position admits a non-parallelizable tensegrity
if and only if
there exists a consistent generic quantization of a graph.
\end{theorem}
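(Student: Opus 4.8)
The plan is to prove both implications by relating equilibrium force-loads on $G(P)$ to equilibrium force-loads on quantizations, and then invoking the monodromy criterion of Theorem~\ref{force=monodromy} cycle by cycle.

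For the ``only if'' direction, suppose $G(P)$ admits a non-parallelizable tensegrity $(G(P),F)$. I would take the associated quantization $G_\T(\L_F,P)$ from Definition~\ref{force-load-guant}, which by Proposition~\ref{quantiation-uniqueness} is well-defined, and check it is consistent. Since $F$ is non-parallelizable at every vertex, the resolution scheme at each $p_i$ inherits exactly the two strong-genericity conditions of Definition~\ref{non-par}, so the quantization is generic. It remains to verify consistency at every simple cycle $C=q_1\ldots q_s$ not containing all vertices of $P$: I would restrict $F$ to the edges of $C$ together with the framing forces, observe that the framing force $\ell_{i-1,i,i+1}$ at $q_i$ is (by Definition~\ref{Definition_fra} and the definition of $\L_F$) the line of force of the sum of all of $F$'s forces at $q_i$ on edges \emph{not} in $C$, hence by equilibrium at $q_i$ the triple $F_{i,i-1}+F_{i,i+1}+F_i=0$ holds. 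Thus the restriction of $F$ is a nonzero equilibrium force-load on the framed cycle $C(G,\T,P,\L_F)$, and Theorem~\ref{force=monodromy} gives triviality of its monodromy, i.e. consistency at $C$. Since $C$ was arbitrary, $G_\T(\L_F,P)$ is consistent.

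For the ``if'' direction, suppose a consistent generic quantization $G_\T(\L,P)$ exists; I want to build a nonzero equilibrium force-load on $G_\T$, since by Proposition~\ref{quantiation-forceload-tensegrity} its induced force-load on $G(P)$ is then an equilibrium force-load, and non-parallelizability follows from genericity of the quantization together with Proposition~\ref{force-structure} (the lines of forces at leaves match the strong-genericity conditions). To construct the force-load on $G_\T$: fix a spanning tree $\mathcal{S}$ of the resolution graph $G_\T$; the complement of $\mathcal{S}$ consists of $b_1(G)$ edges, each closing up a simple cycle. I would propagate a stress around one chosen fundamental cycle using the fact that, by consistency of that cycle and Proposition~\ref{quant--normal}, any almost-equilibrium force-load on it is an equilibrium force-load — so a nonzero choice at one edge extends to a genuine equilibrium assignment on that cycle; then extend over the rest of $\mathcal{S}$ using the weakly-generic resolution-scheme propagation of Proposition~\ref{weakly-generic}. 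The consistency at the \emph{other} fundamental cycles guarantees that the resulting assignment is already in equilibrium there as well (the monodromy being trivial forces the closure condition), so no contradiction arises and the global force-load is equilibrium and nonzero.

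The main obstacle I anticipate is the last point of the ``if'' direction: ensuring that propagating a stress built from one fundamental cycle remains consistent across \emph{all} other cycles simultaneously, rather than just the one used to seed it. The clean way to handle this is to argue that the space of equilibrium force-loads on $G_\T$ is cut out, inside the space of all stress assignments satisfying vertex equilibrium on the trees, by one linear closure condition per fundamental cycle, and that consistency of the quantization means each such condition is automatically satisfied by Theorem~\ref{force=monodromy} once equilibrium holds at all but one vertex of that cycle; combined with Proposition~\ref{quant--normal} this upgrades an almost-equilibrium assignment to a genuine one. Making this simultaneity argument precise — in particular checking that the degrees of freedom line up so that a single nonzero seed survives all the closure conditions — is where the real work lies; the rest is bookkeeping with the earlier propositions.
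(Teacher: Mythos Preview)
Your ``only if'' direction matches the paper's argument closely: build the associated quantization $G_\T(\L_F,P)$, check genericity from non-parallelizability, and for each simple cycle $C$ not through all of $P$ restrict $F$ to obtain a nonzero equilibrium force-load on the framed cycle, then invoke Theorem~\ref{force=monodromy}. The paper additionally verifies explicitly that the framed cycle $C(G,\T,P,\L_F)$ is in general position (so that Theorem~\ref{force=monodromy} actually applies); you skip this, but it is routine bookkeeping.

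Your ``if'' direction has a genuine gap. By Definition~\ref{consistent2}, a quantization is required to be consistent only at simple cycles that do \emph{not} pass through every vertex of $P$. In your spanning-tree scheme the fundamental cycle you close up in $G_\T$ may well project to a Hamiltonian cycle in $G$, and then you have no consistency hypothesis to invoke; Proposition~\ref{quant--normal} is unavailable precisely there. The paper handles this by a vertex-by-vertex induction with a carefully chosen order: it postpones all vertices adjacent to interior edges of one fixed resolution tree $T_n$ until the very end, so that whenever a cycle $C$ closes during the main phase it avoids $p_n$ and consistency applies directly; in the final phase, if a closing cycle happens to be Hamiltonian, the paper uses the degree bound $|E|\ge 3n/2$ to find an extra edge not through $p_n$ and reroute to a shorter consistent cycle $\tilde C$ with $\tilde C_\T\cap T_n=C_\T\cap T_n$, which suffices to certify equilibrium at the new vertex. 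Your proposal contains no analogue of this rerouting step.

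A second, smaller issue: your propagation along the spanning tree invokes Proposition~\ref{weakly-generic}, but that proposition concerns a single resolution scheme $(T_i,\L)_{p_i}$, not a spanning tree of $G_\T$. In the paper's vertex-by-vertex approach each newly added vertex has exactly one incident edge already stressed and two undetermined, so equilibrium at that vertex fixes the two new stresses uniquely; in a spanning tree of $G_\T$ the local degree at a vertex can be $1$, $2$, or $3$, and the order in which stresses become determined is not automatic. Your acknowledged ``simultaneity'' worry is real, and the paper's inductive ordering is exactly what resolves it.
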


\begin{remark}
The choice of the trees in the resolution schemes of the quantization does not change the set of frameworks
that admit nonzero tensegrities, although an appropriate choice can simplify the expressions for geometric conditions defining this set
(for further detailed see in Subsection~\ref{Quant_geom} below).
\end{remark}

\begin{remark}
The monodromy operators are multiplicative with respect to loop addition,
in particular the monodromies does not depend on how one add loops.
Hence the monodromies are well-defined at the elements of the first homology group of the graph $G$ (i.e., on $H_1(G)$).
\end{remark}

\begin{proof}
{\noindent
{\it Sufficient condition.}
Suppose that there exists a non-parallelizable tensegrity $(G(P),F)$ (note that this tensegrity
has nonzero stresses at all edges).
By Proposition~\ref{quantiation-uniqueness}({\it i}) the tensegrity $(G(P),F)$ uniquely defines the
quantization $G_\T(\L_F,P)$. Let us show that $G_\T(\L_F,P)$ is consistent.
}

\vspace{1mm}

First of all, let us show that $G_\T(\L_F,P)$ is generic.
This is equivalent to the fact that every resolution scheme $(T_i,\L)$ of this quantization is strongly generic.
The last directly follows from  non-parallelizability of the force $F$ at every point $p_i$.

Secondly, we show that every simple framed cycle $C(G,T,P,\L_F)$ associated to the quantization and not passing through
all vertices of $G$ is in general position.
Since $G(P)$ is in general position, and the cycle does not pass through all edges of $G$ the cycle $C(P)$ is in general position.
Now let us check the conditions for framed lines at vertices.
Let $p_{i-1}$, $p_i$, and $p_{i+1}$ be arbitrary consecutive vertices of the cycle $C(P)$
and let $\ell_{i-1,i,i+1}$ be the corresponding framing at $p_i$.
Then the direction of the line $\ell_{i-1,i,i+1}$ is defined by the sum of two forces representing edges $p_ip_{i-1}$ and $p_ip_{i+1}$.
Since $F$ is non-parallelizable, both forces are nonzero and the edges are non-parallel.
Hence the line $\ell_{i-1,i,i+1}$ contains neither $p_{i-1}$  nor $p_{i+1}$.
Therefore, by Definition~\ref{fram-gp} the cycle $C(G,T,P,\L_F)$ is in general position.

\vspace{1mm}

Assume that  a simple framed cycle $C(G,T,P,\L_F)$
does not pass through all vertices of $G$.
Let us prove that it has trivial monodromies.
Consider a force-load $\tilde F$ on the cycle $C(G,T,P,\L_F)$ defined as follows:
\begin{itemize}
\item The stresses at the edges of the cycle $C(G,T,P,\L_F)$ coincide with the stresses at the edges of the tensegrity $(G(P),F)$;

\item The force at the framed line at vertex $p_i$ coincides with the sum of all the forces at the adjacent edges
to $p_i$ in $G\setminus C$.
\end{itemize}
By Definition~\ref{force-load-guant}, the line of the framing at vertex $p_i$ coincides with the line of the sum of the forces at the adjacent edges at $p_i$,
hence $\tilde F$ is a force-load on $C(G,T,P,\L_F)$ for all admissible $i$.
Since $F$ is an equilibrium force-load for $G(P)$,
an equilibrium condition for $\tilde F$ at each vertex of $C(G,T,P,\L_F)$ is fulfilled.
Thus $\tilde F$ is a nonzero equilibrium force-load on $C(G,T,P,\L_F)$.
As we have shown above, the cycle $C(G,T,P,\L_F)$ is in general position.
Therefore, by Theorem~\ref{force=monodromy} for any generic line $\ell$ (as in Theorem~\ref{force=monodromy})
the monodromies of the cycle are trivial.
Hence by Definition~\ref{consistent} the cycle $C(G,T,P,\L_F)$ satisfies the monodromy cycle condition.

\vspace{1mm}

Therefore, $G_\T(\L_F,P)$ is consistent.

\vspace{2mm}

{\noindent
{\it Necessary condition.}
Suppose now that $G_\T(\L,P)$ is a consistent generic quantization of $G(P)$.
Let us construct an equilibrium force-load by induction on vertices of the resolution graph $G_\T$.
We construct a sequence
$$
\big((G_0,F), (G_1,F), \ldots, (G_N,F)\big),
$$
where $G_i$ is a collection of vertices and edges of $G_\T$
and $F$ is an assignment of forces to all edges of $G_i$.
In addition this collection satisfy all the following properties:
\begin{itemize}

\item we have $G_0 \subset G_1 \subset \ldots \subset G_N$;

\item for every $i<N-1$ the set $G_{i+1}$ has exactly one vertex more than $G_i$;

\item each vertex of $G_i$ is adjacent to exactly three edges of $G_i$;

\item each edge $e$ of $G_i$ is equipped with a nonzero stress whose direction coincides with $\L(e)$;

\item at every edge $e\in G_{i}\cap G_{j}$ the forces of the force-loads $(G_{i},F)$ and $(G_{j},F)$ coincide;

\item the forces at every vertex of $G_i$ sum up to zero;

\item the graph $G_0$ contains no vertices and one edge of $G_\T$;

\item $G_N=G_\T$.
\end{itemize}
}
If such sequence is constructed, $F$ is an equilibrium force-load for $G_\T(\L,P)$.
Then the induced force-load $\hat F$ is an equilibrium force-load for $G(P)$.

\vspace{2mm}

{\noindent{\it Base of induction.}
Let us start with any edge $v_{i_0}v_{j_0}$ of $G_\T(\L,P)$ with a nonzero stress $(F_{i_0,j_0},F_{j_0,i_0})$ on it
whose lines of force coincide with $\L(v_{i_0}v_{j_0})$. We set this as $(G_0,F)$.
}

\vspace{2mm}

{\noindent{\it Step of induction.}
It is important to choose a correct the order for adding new vertices.
Denote by $V'(G_\T)$ the set of all vertices of $G_T$ that are not adjacent to interior edges of a resolution tree $T_n$.
}

Suppose that we have already constructed the pair $(G_s,F)$ (we assume that $s< N$) let us construct $(G_{s+1},F)$.
If $V'(G_\T)$ is not a subset of vertices of  $G_s$
we choose a new vertex $v_i$ from $V'(G_\T)$ such that $v_i$ is adjacent to at least one of the edges
$v_iv_j\in G_s$.
If $V'(G_\T)$ is already a subset of vertices of  $G_s$ then we pick any
remaining vertex $v_i$ adjacent to at least one of the edges $v_iv_j\in G_s$.

Let $v_kv_i$ and $v_lv_i$ be the remaining two edges adjacent to $v_i$ in the quantization,
add them to $G_{s+1}(F)$ in case if they are not already in it.

We keep all the stresses at all the edges of $G_s\subset G_{s+1}$.
In particular the edge $v_iv_j$ has a stress $(F_{i,j},F_{j,i})$.
Define the stresses $(F'_{i,k},F'_{k,i})$ and $(F'_{i,l},F'_{l,i})$ at $v_iv_k$ and $v_iv_l$ respectively using the following two conditions

1) $F_{i,j}+F'_{i,k}+F'_{i,l}=0$;

2) $F'_{i,k}=-F'_{k,i}$ and $F'_{i,l}=-F'_{l,i}$;

3) the lines of forces for $F'_{i,k}$ and $F'_{i,l}$ coincide with the lines $\L(v_iv_k)$ and $\L(v_iv_l)$.
\\
It is clear that $F'_{i,k}$ and $F'_{i,l}$ are uniquely defined by these two conditions.
In case if $v_k$ or $v_l$ are not in $G_s$ we set respectively $F_{i,k}=F'_{i,k}$ or $F_{i,l}=F'_{i,l}$.

\vspace{1mm}

In case if $v_k$ is in $G_s$, we have already defined another stress while adding the vertex $v_k$.
In this case $G_{s+1}$ contains a framed cycle $C_\T$ with consecutive triple of vertices $v_jv_iv_k$.

If $V'(G_\T)$ is not a subset of vertices of $G_s$ then the framed cycle
$C(G,\T,P,\L)$ is consistent since it does not pass through all the vertices of $G(P)$.
We set
$$
\tilde C(G,\T,P,\L)=C(G,\T,P,\L).
$$

Suppose now $V'(G_\T)$ is already contained in $G_s$.
Then it might happen that the cycle $C\subset G$ corresponding to $C_\T\subset G_\T$ runs through all vertices of $G$.
Since all vertices of $G$ are at least of degree 3, we have at least $\frac{3n}{2}$ edges.
Therefore, if $n>3$ then there is an edge $e\ne q_kq_i$ which is not passing through $p_n$.
Using the edge $e$ one can make another simple cycle $\tilde C$ that has less than $n$ vertices and
such that $\tilde C_\T \cap T_n=C_\T \cap T_n$.
The obtained cycle $\tilde C$ is consistent, since $G_\T(\L,P)$ is consistent.

By construction, the force-load $F$ at $\tilde C_\T(G,\T,P,\L)$ is almost equilibrium.
Then by Proposition~\ref{quant--normal} the force-load $F$ at $\tilde C_\T(G,\T,P,\L)$ is an equilibrium force-load,
since cycle $\tilde C(G,\T,P,\L)$ is consistent.

Therefore, all forces of $G_{s+1}(F)$ sum up to zero at every vertex. All the other required conditions are fulfilled by construction.
Hence we are done with the step of induction.

\vspace{2mm}

As the output of the inductive process described before we get a nonzero equilibrium force-load $F$ at $G_N=G_\T$.
Since the quantization is generic, the resulting equilibrium force-load is non-parallelizable.
This concludes the proof of the necessary condition.
\end{proof}

\section{Geometric conditions on realizability of tensegrities}\label{geometry}

In this section we study algorithmic questions related to explicit construction of geometric conditions for tensegrities
and prove Theorem~\ref{tensegrity=condi}.

\subsection{Construction of geometric conditions for systems defining tensegrities}\label{Quant_geom}

In this subsection we discuss the main building blocks for writing down the geometric conditions.
We start with a natural correspondence between the the elements of $\Xi_G(P)$ and quantizations of $G(P)$.
Further we show how to construct all the lines in simple framed cycles starting from the lines of
the quantizations.
After that we give the algorithm to rewrite an $\hf$-surgeries on the resolution schemes in terms of Operations~I--IV.
Then we discuss how to express monodromy cycle conditions for framed cycles in general position
in terms of Operations~I--IV.
Finally, we define geometric conditions for the framed cycles associated to quantizations.
The results of this subsection are used in the next two subsections.

\subsubsection{A natural correspondence between the elements of $\Xi_G(P)$ and
the set of all quantizations for $G(P)$}\label{wwwww}

Consider a framework $G(P)$ and one of its quantization graphs $G_\T$.
For every point $p_i\in P$ we enumerate the interior edges of $G_\T$
corresponding to the resolution scheme $T_i$ as follows: $e_{i,1},\ldots, e_{i,\deg p_i-3}$.

Once enumeration of edges of $G_\T$ is fixed, the choice of $\L$ in a quantization
uniquely defines the element of $\Xi_G(P)$.
Namely we pick the non-fixed lines according to the rule
$$
\ell_{i,j}=\L(e_{i,j}),
$$
where $1\le i\le n$ and $1 \le j \le \deg p_i-3$.
It is clear that this correspondence is bijective.

\subsubsection{Construction of lines in the framed cycles associated to quantizations in terms of geometric operations}\label{Geom-framed}
Consider an $\hf$-surgery on a resolution scheme, here we follow the notation of Definition~\ref{surII-rs}.
Let us show how to construct the new line $\ell=\L'(v_1'v_2')$ using Operations~I--IV.

\vspace{2mm}

{
\noindent
{\it Step 1.} First of all, let us choose an affine chart.
Namely we pick of a point $p_\infty\ne p$ at a line (say at $\L(v_1v_2)$)
and a line $\ell_\infty\ne \L(v_1v_2)$ through $p_\infty$ (which is a combination of Operations~III and~IV).
}

\vspace{1mm}
{
\noindent
{\it Step 2.} Take an arbitrary point $p'\notin \{p,p_\infty\}$ in the affine chart on the line $\L(v_1v_3)$ (Operation~III).
}

\vspace{1mm}
{
\noindent
{\it Step 3.} Consider the line $\hat \ell$ passing through $p'$ and parallel to $\L(v_1v_2)$. Namely,
$$
\hat \ell=p(\L(v_1v_2)\cap\ell_\infty).
$$
This is a combination of Operations~I and~II.
}

\vspace{1mm}
{
\noindent
{\it Step 4.} Set
$$
p''=\hat\ell \cap \L(v_2v_5).
$$
Notice that $p''\ne p$. Here we use Operation~I.
}

\vspace{1mm}
{
\noindent
{\it Step 5.} Draw lines $\ell'$ and $\ell''$ through the points $p'$ and $p''$
parallel to $\L(v_1v_4)$ and $\L(v_2v_6)$ respectively:
$$
\begin{array}{l}
\ell'=p'(\L(v_1v_4)\cap\ell_\infty);\\
\ell''=p''(\L(v_2v_6)\cap\ell_\infty);\\
\end{array}
$$
Here we have used two Operations~I and two Operations~II.
}

\vspace{1mm}
{
\noindent
{\it Step 6.} Consider $p'''=\ell'\cap \ell''$ (Operation~I), note that by construction we have $p'''\ne p$.
}

\vspace{1mm}
{
\noindent
{\it Step 7.} Finally, we get $\ell=\L'(v_1'v_2')=pp'''$ (Operation~II).
}

\begin{figure}
$$\epsfbox{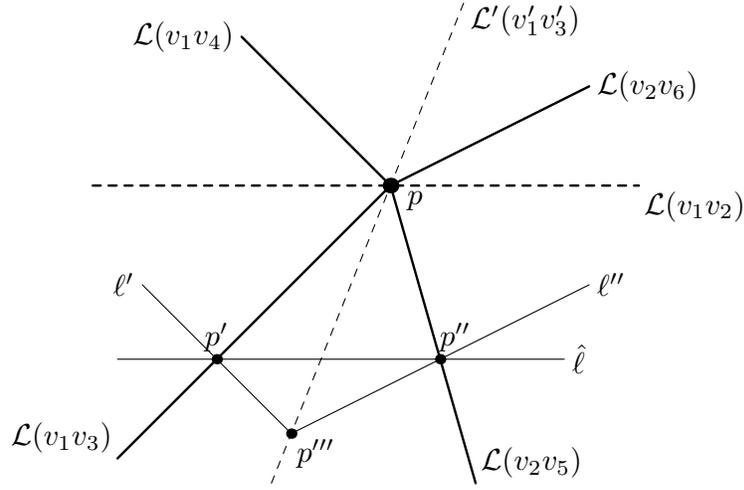}$$
\caption{Geometric construction of $\hf$-surgery on a resolution scheme.}\label{pic.13}
\end{figure}

\begin{definition}\label{abcdr}
Any line $\ell_{jik}$ in the framing associated to a quantization $G_\T(\L,P)$ is defined by a certain
composition of $\hf$-surgeries (see Definitions~\ref{Definition_fra} and~\ref{eqqquiv}), and hence
by the above it is defined by a composition of Operations~I--IV on the lines of $\L(E(G_\T))$.
For simplicity we fix one of the choices of sequences of Operations~I--IV defining the framing $\ell_{jik}$
and call it the {\it sequence of geometric operations} defining the line $\ell_{jik}$.
\end{definition}

\begin{remark}\label{Interesting}
It is interesting to observe that if the degree of a vertex is 3, then
the lines of the associated framing are defined by the edges.
Namely, let $p_i$ be a vertex of $G$ of degree 3. Assume that $p_i$ is adjacent to edges
$e_{ij}$, $e_{ik}$, and $e_{is}$. Then we have $e_{ik}\subset \ell_{jis}$,
$e_{ij}\subset \ell_{kis}$, and $e_{is}\subset \ell_{jik}$:
$$
\epsfbox{pic.15}
$$
Another remark concerns a simple relation on the lines of the associated framing for vertices of degree 4.
Let $p_i$ be a vertex of $G$ of degree 4.
Assume $p_i$ is adjacent to edges $e_{ij}$, $e_{ik}$, $e_{is}$, and $e_{it}$.
Then $\ell_{jik}= \ell_{sit}$:
$$
\epsfbox{pic.16}
$$
Similarly one has $\ell_{jit}\subset \ell_{kis}$ and $\ell_{jis}\subset \ell_{kit}$.
\end{remark}

\subsubsection{Construction of geometric conditions for framed cycles in general position}\label{cond-fcgp}
Let $C\big((p_1,\ldots,p_k),(\ell_1,\ldots,\ell_k)\big)$ be a framed cycle in general position.
Let us iteratively apply $k-3$ times the projection operation defined in Subsection~\ref{Projection_operation} replacing
the first and the second vertices by a new one, i.e.,
$$
\underbrace{\omega_1\circ\ldots\circ\omega_1}_{\hbox{$k-3$ times}}(C(P,L)).
$$
Each time the number of vertices in the cycle decreases by one.
We end up with the framed cycle $((p,p_{k-1},p_k),(\ell,\ell_{k-1},\ell_k)\big)$ in general position.

\begin{definition}\label{cycle_condd}
The condition
$$
\ell\cap\ell_{k-1}\cap\ell_k=true
$$
is the {\it geometric condition} defined by $C$.
\end{definition}

\begin{remark}
Here $\ell$ is the result of the composition of 2-point and 2-line operations on points $p_1,\ldots,p_k$ and lines
$\ell_1,\ldots,\ell_k$ arisen in the above composition of projection operations.
In fact, one can fix a different choice of the sequence of projective operation.
There are $\frac{k!}{3!}$ different possibilities to do so.
They all lead to equivalent geometric conditions.
\end{remark}

\begin{proposition}\label{zzzz}
Let $C(P,L)$ be a framed cycle in general position.
Then $C(P,L)$ satisfies the monodromy cycle condition
if and only if it satisfy any geometric condition defined by this cycle.
\end{proposition}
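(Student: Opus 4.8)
The plan is to show that the geometric condition $\ell\cap\ell_{k-1}\cap\ell_k=true$ produced by the iterated projection $\underbrace{\omega_1\circ\cdots\circ\omega_1}_{k-3}$ is precisely a restatement of the monodromy cycle condition for the reduced triangular framed cycle, and then to transport this back to the original cycle via Proposition~\ref{projection-property}. First I would fix an auxiliary line $\ell$ in general position with respect to all the (finitely many) pairwise intersection points of edges appearing in the original cycle and in all of its projections; since there are only finitely many such points, such an $\ell$ exists, and by Corollary~\ref{monodromy-invariance} the triviality of the relevant monodromies does not depend on this choice. Then I would invoke Proposition~\ref{projection-property}(i) and (ii): each application of $\omega_1$ keeps the cycle in general position and preserves the monodromy operators at all edges not involved in the projection, so after $k-3$ steps the framed cycle $C(P,L)$ has a trivial monodromy if and only if the resulting triangular framed cycle $\bigl((p,p_{k-1},p_k),(\ell_{\mathrm{new}},\ell_{k-1},\ell_k)\bigr)$ has a trivial monodromy.

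Next I would apply Lemma~\ref{triangular} to the triangular framed cycle: its monodromy operator is trivial (equivalently, by the lemma, a nonzero equilibrium force-load exists) if and only if the three framing lines $\ell_{\mathrm{new}}$, $\ell_{k-1}$, $\ell_k$ are concurrent, i.e.\ $\ell_{\mathrm{new}}\cap\ell_{k-1}\cap\ell_k=true$. This is exactly the geometric condition of Definition~\ref{cycle_condd} once we identify $\ell$ in that definition with $\ell_{\mathrm{new}}$ here. Combining this with the previous paragraph yields: $C(P,L)$ satisfies the monodromy cycle condition $\iff$ $\ell_{\mathrm{new}}\cap\ell_{k-1}\cap\ell_k=true$, which is one of the geometric conditions defined by $C$.

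It remains to deal with the word ``any'' in the statement: there are $\tfrac{k!}{3!}$ choices of the order in which vertices are collapsed, and I must argue they all give equivalent conditions. For this I would observe that the equivalence just established characterizes the truth value of every such geometric condition by the same intrinsic property of $C(P,L)$ — namely triviality of its monodromy (which by Proposition~\ref{dif-mono} is independent of the chosen base edge, and by Corollary~\ref{monodromy-invariance} independent of $\ell$). Hence each of the $\tfrac{k!}{3!}$ geometric conditions holds exactly when the monodromy is trivial, so they are pairwise equivalent and any one of them is equivalent to the monodromy cycle condition. The main obstacle I anticipate is purely bookkeeping: checking that in each of the $k-3$ projection steps the chosen auxiliary line $\ell$ continues to avoid the (finitely many, but shifting) set of edge-intersection points of the current cycle — including the newly created point $p_i'$ — so that Proposition~\ref{projection-property}(ii) applies at every step; once this genericity of $\ell$ is secured, the rest is a direct chain of the cited results.
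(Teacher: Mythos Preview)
Your proposal is correct and follows essentially the same route as the paper: reduce to the triangular case via the iterated projection $\omega_1$, invoke Proposition~\ref{projection-property}(i),(ii) to preserve general position and monodromy, and then apply Lemma~\ref{triangular}. The paper's own proof is much terser but identical in structure; your additional care about the auxiliary line $\ell$ and the ``any'' clause just makes explicit what the paper leaves implicit.
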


\begin{proof}
The statement for triangular cycles follows directly from  Lemma~\ref{triangular}.
The statement for framed cycles with 4 and more vertices is reduced to the triangular statement
by projection operations.
Equivalence of the initial and the reduced statements follows from Proposition~\ref{projection-property}
and the definition of the projection operation in terms of 2-point and 2-line operations (see Subsection~\ref{Projection_operation}).
\end{proof}

\begin{example}
If a cycle has three vertices then the relation is
$$
\ell_1\cap\ell_2\cap \ell_3=true,
$$
the sketch is as follows:
$$
\epsfbox{pic.10}
$$
In case of four vertex cycles we have:
$$
(\ell_1\cap\ell_4,\ell_2\cap\ell_3,p_1p_2\cap p_3p_4)=true,
$$
the sketch is as follows:
$$
\epsfbox{pic.11}
$$
Finally, for five vertex cycles we have:
$$
(\ell_2\cap\ell_3,p_1p_2\cap p_3p_4)\cap \ell_1\cap(\ell_4\cap\ell_5,p_1p_3\cap p_3p_4)=true,
$$
the sketch is as follows:
$$\epsfbox{pic.12}$$
Notice that if the number of vertices is greater than 3, then there are several different choices of equivalent geometric conditions.
\end{example}

\subsubsection{Geometric conditions for framed cycles associated to quantizations}

Finally, combining together the above two constructions we have the following descriptive definition.

\begin{definition}\label{trtrt}
Let $G(P)$ be a framework,
let $G_\T(\L,P)$ be a generic quantization for $G(P)$,
and let $C$ be a simple cycle of $G(P)$ that does not contain all the vertices of $P$.
Consider the geometric condition for the cycle $C(G,\T,P,\L)$ as in Definition~\ref{cycle_condd}.
The framed lines of the cycle $C(G,\T,P,\L)$ are written in terms of
points of $P$ and lines of $\L(E(G_\T))$ in Definition~\ref{abcdr}.
By Subsubsection~\ref{wwwww} the lines of $\L(E(G_\T))$ are identified with the lines of of $\Xi_G(P)$.
So we end up with a geometric condition on the points and the lines of $\Xi_G(P)$.
We say that this condition is the {\it geometric condition on $\Xi_G(P)$} for the framed cycle $C(G,\T,P,\L)$
in the quantization $G_\T(\L,P)$.
\end{definition}

\begin{remark}
In fact the geometric condition on $\Xi_G(P)$ depends neither on the choice of configuration $P$ nor on the choice of lines in the configuration.
Although it is defined using quantizations, it is a characteristic of the graph $G$ itself.
\end{remark}

Let us prove the following property for geometric conditions on $\Xi_G(P)$.

\begin{corollary}\label{cor_geommm}
Let $G(P)$ be a framework in general position,
let $G_\T(\L,P)$ be a generic quantization for $G(P)$,
and let $C$ be a simple cycle of $G(P)$ that does not contain all the vertices of $P$.
Then the quantization $G_\T(\L,P)$ is consistent at $C$ if and only if
the quantization satisfies the geometric condition on $\Xi_G(P)$ for the cycle $C(G,\T,P,\L)$
with non-fixed lines identified with the lines associated to quantization.
\end{corollary}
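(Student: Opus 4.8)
The plan is to peel off the definitions and reduce everything to Proposition~\ref{zzzz}. By Definition~\ref{consistent} the quantization $G_\T(\L,P)$ is consistent at $C$ exactly when the framed cycle $C(G,\T,P,\L)$ satisfies the monodromy cycle condition, that is (Definition~\ref{mcc}), has a trivial monodromy. On the other side, by Definition~\ref{trtrt} the geometric condition on $\Xi_G(P)$ for $C(G,\T,P,\L)$ is, by construction, nothing but a geometric condition defined by the framed cycle $C(G,\T,P,\L)$ in the sense of Definition~\ref{cycle_condd}, with its framing lines $\ell_{jik}$ expanded into compositions of Operations~I--IV on $P$ and $\L(E(G_\T))$ via Definition~\ref{abcdr}, and then read through the bijection of Subsubsection~\ref{wwwww} between the non-fixed lines of $\Xi_G(P)$ and the values of $\L$. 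Hence the assertion ``$G_\T(\L,P)$ satisfies the geometric condition on $\Xi_G(P)$ for $C(G,\T,P,\L)$, with the non-fixed lines identified with the lines associated to the quantization'' is literally the statement that the framed cycle $C(G,\T,P,\L)$ satisfies a geometric condition defined by it. So, provided Proposition~\ref{zzzz} applies, the two sides of the corollary coincide.

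Thus the only genuine point to check is that $C(G,\T,P,\L)$ is a framed cycle in general position, so that Proposition~\ref{zzzz} is available; this is the step I would carry out carefully. Two conditions must be verified (Definition~\ref{fram-gp}). First, the underlying cycle $C(P)$ is in general position: since $C$ is a simple cycle of $G(P)$ omitting at least one vertex, it has at most $n-1$ vertices, so this is immediate from Definition~\ref{Definition_general_position_framework} applied to $G(P)$. Second, at each vertex $q_i$ of $C$ the framing line $\ell_{i-1,i,i+1}$ must contain neither $q_{i-1}$ nor $q_{i+1}$. For this I would pass to an equivalent resolution scheme $(T_i',\L')_{q_i}$ in which the leaves corresponding to the edges $q_iq_{i-1}$ and $q_iq_{i+1}$ are adjacent to a common trivalent vertex $v$ (such a scheme exists, by the construction described in the remark after Proposition~\ref{prop_fra}); then weak genericity of the quantization forces the three lines through $v$, namely $q_iq_{i-1}$, $q_iq_{i+1}$ and $\ell_{i-1,i,i+1}=\L'(e)$, to be pairwise distinct. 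Since all three pass through $q_i$, and $q_i\ne q_{i\pm1}$ because the edges of a cycle in general position have distinct endpoints, the line $\ell_{i-1,i,i+1}$ cannot contain $q_{i-1}$ or $q_{i+1}$. Hence $C(G,\T,P,\L)$ is in general position.

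With this established the corollary follows: by Proposition~\ref{zzzz} the framed cycle $C(G,\T,P,\L)$ satisfies the monodromy cycle condition if and only if it satisfies any geometric condition defined by it, and Proposition~\ref{prop_fra} together with the remark following Definition~\ref{cycle_condd} guarantees that neither the framing lines nor the reduction of the cycle to a triangle introduces any real ambiguity, so the phrase ``the geometric condition on $\Xi_G(P)$'' is unambiguous. I expect the main obstacle here to be purely organizational rather than mathematical: one must keep straight three layers of bookkeeping --- the expansion of each framing line through $\hf$-surgeries and Operations~I--IV, the reduction of the framed cycle to a triangle by projection operations, and the bijective identification of $\L$ with the non-fixed lines of $\Xi_G(P)$ --- and check that at every stage the objects stay in general position so that all the previously proved statements remain applicable. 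No new geometric input beyond the general-position verification of $C(G,\T,P,\L)$ above should be needed.
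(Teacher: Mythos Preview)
Your proposal is correct and follows essentially the same route as the paper: unwind Definition~\ref{consistent} to reduce consistency to the monodromy cycle condition, check that $C(G,\T,P,\L)$ is a framed cycle in general position, apply Proposition~\ref{zzzz}, and then invoke Definition~\ref{trtrt} to identify the resulting condition with the geometric condition on $\Xi_G(P)$. The paper's own proof simply asserts the general-position step in one line, whereas you spell it out carefully; your argument for the framing condition (passing to an equivalent resolution scheme and using that strong genericity is preserved under $\hf$-surgeries, so the three lines at $v$ are pairwise distinct) is the right one and is in the spirit of the analogous verification carried out inside the proof of Theorem~\ref{tensegrity=quantiza}.
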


\begin{proof}
By Definition~\ref{consistent} the cycle $C$ is consistent if and only if
$C(G,\T,P,\L)$ satisfies the monodromy cycle condition.
Since $G(P)$ is a framework in general position and $G_\T(\L,P)$ is its generic quantization,
the framed cycle $C(G,\T,P,\L)$ is in general position.
By Proposition~\ref{zzzz} the cycle $C(G,\T,P,\L)$ satisfies the monodromy cycle condition
if and only if it satisfies the geometric condition defined by this cycle.
Hence, by Definition~\ref{trtrt}
the last is equivalent to the fact that the cycle $C(G,\T,P,\L)$ satisfies the geometric condition on $\Xi_G(P)$.
\end{proof}

\begin{example}
Let us consider a simple example related to the Desargues configuration as in Example~\ref{SectionDesargues}
(see Figure~\ref{pic2.1}, on the left).
Consider the following graph and a cycle $p_2p_3p_6$ in it.
\begin{figure}
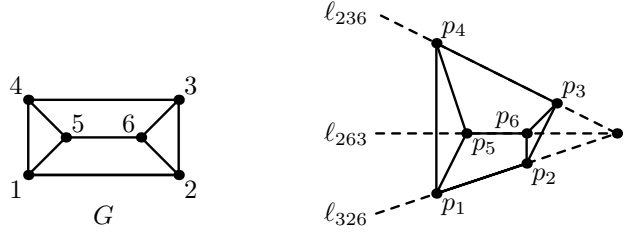

$$\epsfbox{pic2.1} \qquad\qquad \epsfbox{pic2.7}$$
\caption{The graph $G$ (on the left) and the consistency condition for the cycle $p_2p_3p_6$ (on the right).}\label{pic2.1}
\end{figure}
Here the cycle $p_2p_3p_6$ is consistent if
$$
\ell_{326}\cap\ell_{263}\cap\ell_{236}=true,
$$
see Figure~\ref{pic2.1} (on the right).
According to Remark~\ref{Interesting} the line $\ell_{326}=p_1p_2$;
$\ell_{263}=p_5p_6$; and $\ell_{236}=p_3p_4$.
Hence the above condition coincides with
$$
p_1p_2\cap p_5p_6 \cap p_3p_4=true.
$$
\end{example}

\subsection{Proof of Theorem~\ref{tensegrity=condi}}\label{ProofSubSect}
By Theorem~\ref{tensegrity=quantiza} a framework $G(P)$ in general position admits a non-parallelizable tensegrity
if and only if there exists a consistent quantization of a graph.
By Definition~\ref{consistent2} a quantization is consistent if
and only if it is consistent at each simple cycle that does not contain all vertices of $G$.
By Corollary~\ref{cor_geommm} a generic quantization is consistent at a simple cycle if and only if
in satisfy the geometric condition on $\Xi_G(P)$ for this cycle
with non-fixed lines coinciding with the lines associated to quantization.
This concludes the proof.
\qed

\begin{remark}
Quantization of a graph is an auxiliary tool to get the geometric conditions.
Once the geometric conditions are constructed, the quantization is no longer needed.
\end{remark}

\subsection{Techniques to construct geometric conditions defining tensegrities}\label{techniques}

In this subsection we give the summary of the algorithm
to write down the system of geometric conditions for the existence
of non-parallelizable tensegrities.

\vspace{2mm}

{\noindent
{\bf Data.} We start with a framework $G(P)$ in general position.
}

\vspace{1mm}

{\noindent
{\it Step 1.} Fix a collection of resolution schemes $\T$ and the corresponding configuration space $\Xi_G$.
Here we consider a family of quantizations $G_\T(\L,P)$ with fixed $G_\T$ and $P$,
and with the lines of $\L$ acting as parameters.
}

\vspace{1mm}

{\noindent
{\it Step 2.} Pick all simple cycles $C_1,\ldots, C_N$ in $G$ that does not pass through all the points of $G$.
}

\vspace{1mm}

{\noindent
{\it Step 3.} Using the algorithm described in Subsubsection~\ref{Geom-framed}
we write all lines $\ell_{jik}$ in terms of compositions of Operations~I--IV
on the points of $P$ and the lines of $\L(E(G))$. (See Definition~\ref{abcdr}.)
}

\vspace{1mm}

{\noindent
{\it Step 4.} Further, we use the construction of Subsubsection~\ref{cond-fcgp}
to write down geometric conditions for the framed cycles
$C_i(G,\T,P,\L)$ for $i=1,\ldots, N$ that does not contain all vertices of $P$. Recall that the lines of $\L$ here play the role of variables.
}

\vspace{1mm}

{\noindent
{\it Step 5.} Combining together Steps~3 and~4 we get the geometric conditions for the framed cycles
$C_i(G,\T,P,\L)$ (where $i=1,\ldots, N$) in terms of compositions of Operations~I--IV
on the points of $P$ and the lines of $\L(E(G))$.
}

\vspace{1mm}

{\noindent
{\it Step 6.} Finally, we write down the geometric conditions obtained in Step~5 in terms of the corresponding configuration space $\Xi_G$
(following Subsubsection~\ref{wwwww}).
}

\vspace{1mm}

{\noindent
{\bf Output.} As an output we get the system of geometric conditions on the space $\Xi_G$.
By Theorem~\ref{tensegrity=condi} this system is fulfilled if and only if there exists a non-parallelizable tensegrity at $\Xi_G$.
}

\begin{remark}
Since monodromies are multiplicative with respect to cycle addition,
at Step~2 it is sufficient to pick only the simple cycles generating $H_1(G)$.
In practice even less cycles are usually needed for the complete output.
\end{remark}

\section{Conjecture on strong geometric conditions for tensegrities}

In Subsection~7.1 we define strong geometric conditions, show several examples
and formulate a conjecture for tensegrities.
Further, in Subsection~7.2 we say a few words regarding this conjecture for the case of
graphs having less than 11 vertices.

\subsection{Some examples and the conjecture formulation}

First, we define strong geometric conditions.

\begin{definition}
Consider a configuration space $\Xi$ of fixed points and non-fixed lines.
We say that a geometric condition on vertices of $\Xi$ is {\it strong}
if it does not depend on non-fixed lines of $\Xi$
and it does not include any point or line obtained by Operations~III and~IV.
\end{definition}

In order to construct a strong geometric condition one uses only Operations~I and~II.
Therefore, any strong geometric condition is represented by some Cayley algebra expression
(see Remark~\ref{Cayley-remark}).

\begin{remark}
Every strong geometric condition can be represented by some 3-point relation.
It is always possible to do so,
since every line in the condition is defined by a pair of already constructed points (including the points of $P$).
So if $\ell=pq$ then
$$
\ell\cap \ell_2 \cap \ell_3=true \quad \Leftrightarrow \quad (p,q,\ell_2 \cap \ell_3)=true,
$$
and
$$
p_2\in\ell=true \quad \Leftrightarrow \quad (p_2,p,q)=true.
$$
\end{remark}

We continue with several examples, showing different relations between geometric conditions and
strong geometric conditions.

\begin{example}\label{exx1}
Let us start with an example of a strong geometric condition.
Let
$$
\Xi_1=\big((p_1,p_2,\ldots,p_7),()\big)
$$
(here the list $L$ is empty).
Consider the condition:
$$
\big(p_1p_4\cap p_2p_3,p_3p_6\cap p_4p_5,p_7\big)=true.
$$
This condition is a strong geometric condition. Here is a configuration of seven points satisfying this condition:
$$\epsfbox{pic.6}$$
\end{example}

\begin{example}\label{exx2}
In the second example we consider a system of geometric conditions which, in fact, can be rewritten
as a strong geometric condition.
Let
$$
\Xi_2=\big((p_1,p_2,p_3),(\ell_{2,1})\big),
$$
recall that $p_2\in \ell_{2,1}$. Then the system of geometric conditions
$$
(p_2, \ell_{2,1})=true=(p_3,\ell_{2,1})
$$
$$
\epsfbox{pic.7}
$$
(here the dashed line is the non-fixed line) is equivalent to the strong geometric condition
$$
(p_1,p_2,p_3)=true.
$$
$$
\epsfbox{pic.8}
$$

\end{example}

\begin{example}\label{exx3}
In the last example here we consider a geometric condition which cannot be rewritten in terms of
strong geometric conditions.
Let
$$
\Xi_3=\big((p_1,p_2,p_3,p_4,p_5, p_6), (\ell_{1,1},\ell_{2,1},\ell_{3,1})\big),
$$
recall that $p_1\in \ell_{1,1}$, $p_2\in \ell_{2,1}$, and $p_3\in \ell_{3,1}$.
The system of geometric conditions
$$
\left\{
\begin{array}{l}
\ell_{1,1}\cap\ell_{2,1}\cap p_4p_5=true\\
\ell_{2,1}\cap\ell_{3,1}\cap p_5p_6=true\\
\ell_{3,1}\cap\ell_{1,1}\cap p_6p_4=true\\
\end{array}
\right.
$$
is not equivalent to any system of strong geometric conditions on $P$.
The reason for that is that certain initial generic point configurations $P$ admit no solutions while the others generic ones admit two solutions.
The set configurations admitting solutions is not algebraic (it is semi-algebraic), while all strongly geometric conditions are algebraic
sets. Thus this system cannot be rewritten in terms of strongly geometric conditions (we leave the details as an exercise for the reader).
Below is the example of a configuration of nine points and three lines satisfying the above system of condition:
$$
\epsfbox{pic.9}
$$
(here the dashed lines are the non-fixed lines).
\end{example}

Finally, let us formulate the conjecture on strong geometric description of tensegrities
(this is a slightly modified version of Conjecture~2 of~\cite{DKS}).
\begin{conjecture}\label{tensegrity=condi-conjecture}
For every graph $G$ there exists a system of strong geometric conditions such that
a framework $G(P)$ in general position admits a non-parallelizable tensegrity
if and only if
$P$ satisfies this system of strong geometric conditions.
\end{conjecture}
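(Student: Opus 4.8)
The plan is to leverage Theorem~\ref{tensegrity=condi}, which already reduces the existence of a non-parallelizable tensegrity for a framework $G(P)$ in general position to a system of geometric conditions on $\Xi_G(P)$, and then to upgrade this system to one of \emph{strong} geometric conditions by eliminating, one at a time, the non-fixed lines of $\Xi_G(P)$ together with every point and line produced by Operations~III and~IV. The natural starting point is the $3$-regular case, where there is nothing to prove: by Remark~\ref{Interesting} every associated framing $\ell_{jik}$ at a degree-$3$ vertex is an edge line $p_ip_s$, the space $\Xi_G(P)$ carries no non-fixed lines, and the projection operations of Subsubsection~\ref{cond-fcgp} involve only Operations~I and~II, so the conditions of Theorem~\ref{tensegrity=condi} are already strong. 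Thus the entire difficulty is concentrated in the ``excess degree'' $\sum_i(\deg p_i-3)$, i.e.\ in the non-fixed lines $\ell_{i,1},\dots,\ell_{i,\deg p_i-3}$ sitting at higher-degree vertices.

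First I would set up an induction on this excess degree, the inductive step being an elimination lemma: given the system produced by Steps~1--6 of Subsection~\ref{techniques} for a fixed generic quantization, and given one non-fixed line $\ell_{i,j}$, the assertion ``there exists a choice of $\ell_{i,j}$ making the remaining subsystem fulfillable'' should again be equivalent to a system of geometric conditions not mentioning $\ell_{i,j}$. Two ingredients are meant to make this work. On one hand, one is free to choose the resolution trees $\T$ (see the remark following Theorem~\ref{tensegrity=quantiza}), and a careful choice can be arranged so that $\ell_{i,j}$ enters the relevant cycle conditions only through incidences of the form $\ell_{i,j}\cap(\text{constructed line})$ or $p\in\ell_{i,j}$; each such occurrence can be resolved by substituting the forced point, which is a constructible (Operations~I--II) operation, so the existential quantifier over $\ell_{i,j}$ collapses. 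On the other hand, and this is what must rule out the semialgebraic pathology of Example~\ref{exx3}, one knows independently from rigidity theory that the locus of frameworks admitting a nonzero self-stress is the determinantal variety on which the rigidity matrix drops rank, hence a Zariski-closed, projectively invariant subset of the configuration space; therefore the set cut out after the elimination is genuinely algebraic, and by the first fundamental theorem of invariant theory it is described by bracket-polynomial conditions. The induction would terminate in a system of geometric conditions on the fixed points $p_1,\dots,p_n$ alone, built only from Operations~I and~II, that is, in strong geometric conditions; and by the remark after Theorem~\ref{tensegrity=quantiza} on multiplicativity of monodromies over $H_1(G)$, one may even restrict attention to a cycle basis.

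The main obstacle is precisely the passage from a bracket-polynomial condition to a meet--join expression, i.e.\ the Cayley factorization problem flagged as open in the Introduction. Knowing that the tensegrity locus is an algebraic, invariant variety supplies its ideal in the bracket ring, but no general theorem guarantees that an arbitrary bracket polynomial factors as a composition of Operations~I--II; the whole content of the conjecture is that the very special determinantal bracket polynomials arising from rigidity matrices always do. Within the inductive scheme above, the danger point is the elimination of a free line whose two defining incidences cannot both be solved simultaneously, for there the projection of the incidence variety could fail to be cut out by meet--join expressions. I would therefore expect the crux of any proof to be a structural analysis of how the non-fixed lines of $\Xi_G(P)$ enter the monodromy cycle conditions through Subsubsections~\ref{Geom-framed}--\ref{cond-fcgp}, showing that for tensegrity systems the ``bad'' configuration of Example~\ref{exx3} never arises; equivalently, that the $\hf$-surgery construction of the framings $\ell_{jik}$, though it passes through generic auxiliary data, can always be reorganized so that every free line is used only in a constructible way. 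I do not see how to carry this out for arbitrary $G$, which is exactly why the statement is posed as a conjecture rather than proved.
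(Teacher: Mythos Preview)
The statement is a \emph{conjecture}: the paper does not prove it, and your proposal is honest that it is not a proof either. So there is no ``paper's own proof'' to compare against; what the paper offers in Subsection~7.2 is partial evidence, and it is worth contrasting that evidence with the strategy you outline.

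Your plan is a global one: induct on the excess degree $\sum_i(\deg p_i-3)$ and eliminate the non-fixed lines of $\Xi_G(P)$ one at a time, using the algebraicity of the self-stress locus to argue that each elimination step stays within meet--join expressible conditions. You correctly locate the gap: the passage from ``algebraic and projectively invariant'' to ``Cayley-factorizable'' is exactly the open problem flagged in the Introduction, and Example~\ref{exx3} shows that naive existential elimination of a free line can already produce a semialgebraic (non-constructible) condition. Nothing in your outline explains why the particular systems arising from Theorem~\ref{tensegrity=condi} avoid this; the appeal to the first fundamental theorem gives bracket polynomials, not Cayley expressions.

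The paper's evidence proceeds differently and more concretely. Rather than eliminating free lines algebraically, it exploits the graph structure: Observation~A says that a cycle all of whose vertices have degree~$3$ yields a strong condition outright (by Remark~\ref{Interesting} the framings are edge lines); Observation~B says that a cycle with exactly one higher-degree vertex lets one \emph{solve} for one non-fixed line in terms of $2$-point and $2$-line operations, reducing to a graph with one fewer vertex. Lemma~\ref{Lemma1} is the combinatorial engine guaranteeing such a cycle exists whenever the number of degree-$3$ vertices in some connected piece exceeds the number of high-degree vertices minus two. This suffices for all graphs on $n\le 9$ vertices with $2n-3$ edges, but the paper exhibits an explicit $10$-vertex graph (four vertices of degree~$4$, no two degree-$3$ vertices adjacent in a usable way) for which Lemma~\ref{Lemma1} fails and calls it ``a simplest candidate for a counterexample''. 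So where your approach stalls on a general factorization obstacle, the paper's approach stalls on a specific small graph; neither resolves the conjecture.
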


If this conjecture is true then every condition is expressed in terms of Cayley algebra
operations on the vertices of frameworks (see Remark~\ref{Cayley-remark}).
The important additional problem here is to give a constructive description of such conditions.

\subsection{Strong geometric conditions for graphs on small number of vertices}

In this subsection we test Conjecture~\ref{tensegrity=condi-conjecture} on
graphs with $n$ vertices and $2n-3$ edges for $n<10$.

\begin{remark}
It is believed that all geometric conditions for an arbitrary graph
is reduced to the ones coming from graphs with $n$ vertices and $2n-3$ edges.
In fact, the most interesting connected graphs with $n$ vertices and $2n-3$ edges are Laman graphs.
Each subset of $m\ge 2$ vertices of such graph spans at most $2m-3$ edges.
One of the important properties of such graphs is that they can be embedded as pseudo-triangulations (Theorem~1.1 of~\cite{Rot2}).
This is one of the ways to show that such graphs do not have nonzero tensegrities for a randomly chosen configuration of points.
\end{remark}

Let us continue with the following two observations.

\vspace{2mm}

{\bf Observation A.}
If all vertices of a cycle are of degree 3
then the geometric condition defined by this cycle is a strong geometric condition.

\vspace{2mm}

{\bf Observation B.}
Suppose all vertices of a cycle are of degree 3 except for one which is of degree greater than 3.
Then one of the non-fixed lines of the vertex of degree greater than 3 is defined by a composition of
2-point and 2-line operators on vertices of the cycle.
This means that a geometric condition for the graph with $n$ vertices has a reduction to
geometric conditions on a certain graph with $n-1$ vertices. In addition if the graph on $n-1$
vertices is defined by a system of strong geometric conditions,
the original graph is defined by a system of strong geometric conditions as well.

\vspace{2mm}

\begin{lemma}\label{Lemma1}
Consider a connected graph $G$ whose vertices are of degree $\ge 3$,
suppose that $m$ of them are of degree $> 3$ while the rest are of degree $3$.
Let $G$ has $k$ vertices of degree 3 forming a connected component $($denote it by $G_0$$)$.
If $k+2>m$ then $G$ has a simple cycle with at most one vertex of degree $>3$.
\end{lemma}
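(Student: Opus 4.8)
The plan is to split on whether the component $G_0$ already contains a cycle. If it does, that cycle lies entirely inside $G_0$, hence passes through no vertex of degree $>3$ at all, and we are done. So from now on I assume $G_0$ is acyclic, i.e.\ a tree on $k$ vertices, and therefore has exactly $k-1$ edges.

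Next I would count the edges of $G$ incident to $V(G_0)$. Since every vertex of $G_0$ has degree $3$ in $G$, this count is $\sum_{v\in V(G_0)}\deg_G(v)=3k$. Edges with both endpoints in $G_0$ are counted twice, and there are $k-1$ of them; edges with exactly one endpoint in $G_0$ are counted once. Hence the number of the latter (``boundary'' edges) equals $3k-2(k-1)=k+2$. The key observation is that each boundary edge must have its other endpoint at a vertex of degree $>3$: if a boundary edge ended at a degree-$3$ vertex $w$, then $w$ would lie in the same connected component of the subgraph of $G$ induced by the degree-$3$ vertices as its $G_0$-endpoint, forcing $w\in V(G_0)$ and contradicting the fact that $G_0$ is such a component. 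Thus all $k+2$ boundary edges land on the $m$ vertices of degree $>3$.

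Since $k+2>m$, the pigeonhole principle yields a vertex $x$ of degree $>3$ that is an endpoint of at least two boundary edges; because $G$ has no multiple edges, these come from two distinct vertices $u_1,u_2\in V(G_0)$. As $G_0$ is a tree there is a unique path $P$ from $u_1$ to $u_2$ inside $G_0$, and all of its vertices have degree $3$. Adjoining the edges $u_1x$ and $xu_2$ to $P$ gives a closed walk which is a genuine simple cycle, because the vertices of $P$ are distinct and $x\notin V(G_0)$ does not lie on $P$. This cycle meets the set of vertices of degree $>3$ only in $x$, so it is a simple cycle with at most one vertex of degree $>3$, as claimed.

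I expect the only substantive point to be the edge count producing precisely $k+2$ boundary edges, together with the remark that every boundary edge must go to a vertex of degree $>3$ (this is exactly where the hypothesis that $G_0$ is a \emph{connected component} of the degree-$3$ induced subgraph, rather than merely some connected subgraph, is used); once this is in place, $k+2>m$ is precisely what the pigeonhole step requires, and the tree structure of $G_0$ closes the argument. The degenerate case $k=0$ is vacuous and may be ignored, or excluded by assuming $G_0$ nonempty.
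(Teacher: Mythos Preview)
Your proof is correct and follows essentially the same approach as the paper: split on whether $G_0$ contains a cycle, and in the acyclic case count the $k+2$ boundary edges and apply pigeonhole against the $m$ high-degree vertices to force two such edges to share an endpoint $x$, then close up via the unique tree path in $G_0$. Your write-up is in fact more explicit than the paper's---you justify the count $3k-2(k-1)=k+2$ and spell out why boundary edges must land on degree-$>3$ vertices (maximality of $G_0$ as a component of the degree-$3$ induced subgraph), whereas the paper states these steps tersely.
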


\begin{proof}
Let $G_0$ be a maximal connected component satisfying the conditions of the lemma.
If $G_0$ has a cycle then by Observation A the geometric condition defined by this cycle is a strong geometric condition.
Suppose now $G_0$ is a tree. Then it has precisely $k+2$ edges connecting $G_0$ with $G\setminus G_0$.
Since $G_0$ is maximal, there is no vertices of degree 3 of $G\setminus G_0$ connected by an edge with $G_0$.
Since the number of vertices of degree $\ge 3$ is less than $k$, there are two edges connecting $G_0$ with the same
vertex of degree $\ge 3$.
Therefore, we have one cycle whose all vertices except one are of degree 3. So we are in position to use Observation~B.
This concludes the proof of the lemma.
\end{proof}

In our paper~\cite{DKS} we have found all the geometric conditions for graphs on $n$ vertices and $2n-3$ edges for  $n=6,7,8$.
It is interesting to admit that Lemma~\ref{Lemma1} and Observations~A and~B are sufficient to write down strong geometric conditions
for all the graphs considered before and also for the new case of graphs on $9$ vertices
(and the corresponding examples of papers~\cite{Guz2,Guz3,Guz1,WW}).
Here is a way to do this.

\vspace{2mm}

{\it A general remark.} Let a graph $G$ has a vertex of degree 1 or 2.
Then $G$ admits a non-parallelizable tensegrity only if
the framework is not in general position. We skip such cases.

\vspace{2mm}

{\it Case $n<6$.} If $n<6$ then $\frac{3}{2}n>2n-3$. Hence we always have some vertex of degree 1 or 2.

\vspace{2mm}

{\it Case $n=6$.} We have 9 edges here. All cycles have all vertices of degree 3. Hence
according to Observation~A we have strong geometric conditions for them.

\vspace{2mm}

{\it Case $n=7$.} We have 11 edges here. All cycles have all vertices of degree 3 except one which is of degree $4$. Hence
according to Observations~A and~B we have strong geometric conditions for them (we either have a condition straight away
or make a reduction to the case of $n=6$).

\vspace{2mm}

{\it Case $n=8$.} We have 13 edges here. Here we have at most two vertices of degree $>3$ and at least 6 vertices of degree 3.
Hence $k\ge 1$ and $m\le 2$. Therefore by Lemma~\ref{Lemma1} we have either a strong geometric condition
or a reduction to the case of $n=7$.

\vspace{2mm}

{\it Case $n=9$.} We have 15 edges here. If we have at most two vertices of degree $>3$ then the situation is as in the case of $n=8$.
The only new case here is if we have three vertices of degree $4$ and six vertices of degree $6$.
In this case there are at most 12 edges adjacent to vertices of degree $4$.
Hence there exists an edge that is adjacent to two vertices of degree 3.
Hence we have a subtree with $k\ge 2$. since $m=3<4\le k+2$ we can apply Lemma~\ref{Lemma1} and reduce this case
to the case of $n=8$.

\vspace{2mm}

We would like to conclude this paper with a small discussion on Conjecture~\ref{tensegrity=condi-conjecture}.
The main evidence that this conjecture is true was provided by numerous different examples confirming it.
However it turns out that all the examples examined satisfy the condition of Lemma~\ref{Lemma1}.
Meanwhile if the number of vertices is great, then the condition of Lemma~\ref{Lemma1} is not necessarily fulfilled.
One might expect a large number of vertices of degree greater than $3$,
this can easily happen for graphs on $n$ vertices and $2n-3$ edges.
The extremal example here is when we have
$n-6$ vertices of degree 4 and six vertices of degree 3.
The case $n=10$ already contains several non-equivalent (up to $\hf$-surgeries) graphs
for which the condition of  Lemma~\ref{Lemma1} is not fulfilled.
Here is an example of one of them:

$$
\epsfbox{pic.14}
$$
For this graph we are no longer in position to apply Lemma~\ref{Lemma1},
so this graph is a simplest candidate for a counterexample.

\vspace{5mm}


\begin{thebibliography}{99}
\bibitem{Cas} D.L.D.~Caspar and A. Klug, {\it Physical principles in the
construction of regular viruses}, in Proceedings of Cold Spring
Harbor Symposium on Quantitative Biology, vol.~27 (1962),
pp.~1--24.
%
\bibitem{Che}
J.~Cheng, M.~Sitharam, I.~Streinu
{\it Nucleation-free $3D$ rigidity}, 27~p. (2013), arXiv:1311.4859 [cs.CG].
%
\bibitem{Con} R.~Connelly and W.~Whiteley, {\it Second-order rigidity and
prestress stability for tensegrity frameworks}, SIAM Journal of
Discrete Mathematics, vol.~9, no.~3 (1996), pp.~453--491.
%
\bibitem{Con2}
R.~Connelly, {\it Tensegrities and global rigidity},
Shaping space, Springer, New York (2013),  pp.~267-–278.
%
\bibitem{Con2013}
R.~Connelly, {\it What is $\ldots$ a tensegrity?}, Notices Amer. Math. Soc., vol.~60, no.~1 (2013),  pp.~78-–80.
%
\bibitem{DKS}
F.~Doray, O.~Karpenkov, J.~Schepers,
{\it Geometry of configuration spaces of tensegrities},
Disc. Comp. Geom., vol.~43, no.~2 (2010), pp.~436--466.
%
\bibitem{DRS}
P.~Doubilet, G.-C.~Rota, J.~Stein,
{\it On the foundations of combinatorial theory.
IX. Combinatorial methods in invariant theory},
Studies in Appl. Math., vol.~53 (1974), pp.~185-–216.
%
\bibitem{Rot2}
R.~Haas, D.~Orden, G.~Rote, F.~Santos, B.~Servatius, H.~Servatius,
D.~Souvaine, I.~Streinu, W.~Whiteley, {\it Planar minimally rigid
graphs and pseudo-triangulations}, Comput. Geom., vol.~31, no.~1--2
(2005), pp.~31--61.
%
\bibitem{Guz2} M.~de~Guzm\'an, {\it Finding Tensegrity Forms}, preprint (2004).
%
\bibitem{Guz3} M.~de~Guzm\'an, D. Orden, {\it Finding tensegrity
structures: Geometric and symbolic aproaches}, in Proceedings of
EACA-2004, pp.~167--172 (2004).
%
\bibitem{Guz1} M.~de~Guzm\'an, D.~Orden, {\it From graphs to tensegrity structures:
Geometric and symbolic approaches}, Publ. Mat. 50  (2006),
pp.~279--299.
%
\bibitem{Gra}
D.~Gray, http://expedition.uk.com/projects/tensegritree-university-of-kent/
%
\bibitem{Ing} D.~E.~Ingber, {\it Cellular tensegrity: defining new rules of
biological design that govern the cytoskeleton}, Journal of Cell
Science, vol.~104 (1993), pp.~613--627.
%
\bibitem{Ing2}
D.E.~Ingber, N.~Wang, D.~Stamenovi\'c,
{\it Tensegrity, cellular biophysics, and the mechanics of living systems},
Rep. Progr. Phys., vol.~77, no.~4 (2014), 046603, 21 p.
%
\bibitem{JJSS2015}
B.~Jackson, T.~Jord\'an, B.~Servatius, H.~Servatius,
{\it Henneberg moves on mechanisms},
Beitr. Algebra Geom.,
vol.~56, no.~2 (2015),  pp.~587-–591.
%
\bibitem{JN2015}
B.~Jackson, A.~Nixon,
{\it Stress matrices and global rigidity of frameworks on surfaces},
Discrete Comput. Geom. vol.~54, no.~3 (2015), pp.~586-–609.
%
\bibitem{KSS}
O.~Karpenkov, J.~Schepers, B.~Servatius,
{\it On stratifications for planar tensegrities with a small number of vertices},
ARS Mathematica Contemporanea, vol.~6, no.~2 (2013), pp.~305--322.
%
\bibitem{Kit2014}
D.~Kitson, S.C.~Power,
{\it Infinitesimal rigidity for non-Euclidean bar-joint frameworks},
Bull. Lond. Math. Soc., vol.~46, no.~4, (2014),  pp.~685-–697.
%
\bibitem{Kit2015}
D.~Kitson, B.~Schulze
{\it Maxwell-Laman counts for bar-joint frameworks in normed spaces},
Linear Algebra and its Applications vol.~481, (2015), pp.~313--329.
%
\bibitem{HLi}
H.~Li, {\it Invariant algebras and geometric reasoning.}
With a foreword by David Hestenes. World Scientific Publishing Co. Pte. Ltd., Hackensack, NJ, 2008, xiv+518 pp.
%
\bibitem{Max} J.~C.~Maxwell, {\it On reciprocal figures and diagrams of
forces}, Philos. Mag. vol.~4, no.~27 (1864), pp.~250--261.
%
\bibitem{Mot} R.~Motro, {\it Tensegrity: Structural systems for the future}, Kogan
Page Science, London, 2003.
%
%
\bibitem{Rot1} B.~Roth, W.~Whiteley, {\it Tensegrity frameworks},
Trans.\ Amer.\ Math.\ Soc., vol.~265, no.~2 (1981), pp.~419--446.
%
\bibitem{SW} F.V.~Saliola, W.~Whiteley,
{\it Some notes on the equivalence of first-order rigidity in various geometries},
arXiv:0709.3354 [math.MG], 15~p.
%
\bibitem{Sim}
C.~Simona-Mariana, B.~Gabriela-Catalina, {\it Tensegrity applied to modelling the motion of viruses},
Acta Mech. Sin. vol.~27, no.~1 (2011), pp.~125-–129.
%
\bibitem{Ske} R.E.~Skelton, {\it Deployable tendon-controlled structure}, United
States Patent 5642590, July 1, 1997.
%
\bibitem{Sne} K.~Snelson, http://www.kennethsnelson.net.
%
%
\bibitem{Sta}
R.P.~Stanley, {\it Enumerative combinatorics. Vol. 2.} With a foreword by Gian-Carlo Rota and appendix 1 by Sergey Fomin.
Cambridge Studies in Advanced Mathematics, 62. Cambridge University Press, Cambridge, 1999, xii+581 pp.
%
\bibitem{Tib} A.~G.~Tibert. {\it Deployable tensegrity structures for space
applications}, Ph.D. Thesis, Royal Institute of Technology,
Stokholm 2002.
%
\bibitem{Wan}
M.~Wang, M.~Sitharam,
{\it Combinatorial Rigidity and Independence of Generalized Pinned Subspace-Incidence
               Constraint Systems}, in Automated Deduction in Geometry - 10th International
Workshop, Coimbra, Portugal (2014), pp.~166--180, arXiv:1503.01837 [cs.CG].
%
\bibitem{WW} N.~L.~White, W.~Whiteley,
{\it The algebraic geometry of stresses in frameworks}, SIAM J.
Alg. Disc. Meth., vol.~4, no.~4 (1983), pp.~481--511.
%
\bibitem{WW2} N.~L.~White, W.~Whiteley,
{\it The algebraic geometry of motions of bar-and-body frameworks},
SIAM J. Algebraic Discrete Methods, vol.~8, no. 1 (1987), pp.~1–-32.
%
\bibitem{WM}
N.L.~White, T.~McMillan, {\it Cayley factorization. Symbolic and algebraic computation}, Rome, (1988), pp.~521-–533,
Lecture Notes in Comput. Sci., 358, Springer, Berlin, 1989.
%
\bibitem{White}
N. White,
{\it The bracket ring of a combinatorial geometry},
I, Trans. Amer. Math. Soc., vol.~202 (1975),
pp. 79--95.
%
\bibitem{Whi} W.~Whiteley, {\it Rigidity and scene analysis}, in J.E.~Goodman and
J.~O'Rourke, editors, Handbook of Discrete and Computational
Geometry, chapt.~49, pp.~893--916, CRC Press, New York, 1997.
\end{thebibliography}
\end{document}